\newcommand{\rar}{\rightarrow}
\newcommand{\lar}{\longrightarrow}
\newcommand{\llar}{-\kern-5pt-\kern-5pt\longrightarrow}
\newcommand{\surjects}{\twoheadrightarrow}
\newtheorem{Theorem}{Theorem}[section]
\newtheorem{Lemma}[Theorem]{Lemma}
\newtheorem{Corollary}[Theorem]{Corollary}
\newtheorem{Proposition}[Theorem]{Proposition}
\newtheorem{Remark}[Theorem]{Remark}
\newtheorem{Example}[Theorem]{Example}
\newtheorem{Conjecture}[Theorem]{Conjecture}
\newtheorem{Definition}[Theorem]{Definition}
\newtheorem{Question}[Theorem]{Question}
\def\sqr#1#2{{\vcenter{\hrule height.#2pt
        \hbox{\vrule width.#2pt height#1pt \kern#1pt
            \vrule width.#2pt}
        \hrule height.#2pt}}}
\def\phi{\varphi}
\def\demo{\noindent{\bf Proof. }}
\def\square{\mathchoice\sqr64\sqr64\sqr{4}3\sqr{3}3}
\def\qed{\hspace*{\fill} $\square$}
\def\zz{{\bf z}}
\def\vv{{\bf v}}
\def\hht{{\rm ht}\,}
\def\ker{{\rm ker}\,}
\def\rk{{\rm rank}\,}
\def\restr{{\kern-1pt\restriction\kern-1pt}}
\def\pp{{\mathbb P}}
\begin{document}
\setlength{\hoffset}{-2cm}

\title[Free divisors, blowup algebras, and analytic spread]{Free divisors, blowup algebras of Jacobian ideals, \\ and maximal analytic spread}

\author[R. Burity]{Ricardo Burity }
\address{Departamento de Matem\'atica, Universidade Federal da
Para\'iba, 58051-900 Jo\~ao Pessoa, Para\'iba, Brazil.}
\email{ricardo@mat.ufpb.br}

\author[C. B. Miranda-Neto]{Cleto B. Miranda-Neto}
\address{Departamento de Matem\'atica, Universidade Federal da
Para\'iba, 58051-900 Jo\~ao Pessoa, Para\'iba, Brazil.}
\email{cleto@mat.ufpb.br}

\author[Z. Ramos]{Zaqueu Ramos}
\address{Departamento de Matem\'atica, CCET, Universidade Federal de
Sergipe, 49100-000 S\~ao Cristov\~ao, SE, Brazil.}
\email{zaqueu@mat.ufs.br}

\subjclass[2010]{Primary: 14J70, 32S05, 13A30, 14E05, 14M05; Secondary: 13C15, 13H10, 14E07, 32S22, 32S25.}
\keywords{Free divisor, Jacobian ideal, blowup algebra, Rees algebra, analytic spread, homaloidal divisor}
\thanks{{\it Corresponding author}: Cleto B. Miranda-Neto  (cleto@mat.ufpb.br).}

\begin{abstract} Free divisors form a celebrated class of hypersurfaces which has been extensively studied in the past fifteen years. Our main goal is to introduce four new families of homogeneous free divisors and investigate central aspects of the blowup algebras of their Jacobian ideals. For instance, for all families the Rees algebra and its special fiber are shown to be Cohen-Macaulay -- a desirable feature in blowup algebra theory. Moreover, we raise the problem of when the analytic spread of the Jacobian ideal of a (not necessarily free) polynomial is maximal, and we characterize this property with tools ranging from cohomology to asymptotic depth. In addition, as an application, we give an ideal-theoretic homological criterion for homaloidal divisors, i.e., hypersurfaces whose polar maps are birational. 
\end{abstract}

\maketitle

\vspace{-0.1in}

\centerline{\it Dedicated with gratitude to the memory of Professor Wolmer V. Vasconcelos,} 
\centerline{\it mentor of generations of commutative algebraists.}

\medskip

\section*{Introduction}

The well-studied theory of free divisors -- or free hypersurfaces -- has its roots in the seminal work of K. Saito \cite{S}, and in subsequent papers of H. Terao \cite{Terao, Terao2, Terao3} mostly concerned with the case of hyperplane arrangements. The original environment was the complex analytic setting, and the motivation was the computation of Gauss-Manin connections for the universal unfolding of an isolated singularity; for instance, it was proved that the discriminant in the parameter space of the universal unfolding is a free divisor. Over time, different approaches, viewpoints, and interests have emerged, including algebraic (and algebro-geometric) adaptations and even generalizations that have drawn the attention of an increasing number of researchers over the last fifteen years. The list of references is huge; see, e.g., Abe \cite{Abe}, Abe, Terao and Yoshinaga \cite{ATY}, Buchweitz and Conca \cite{B-C},
Buchweitz and Mond \cite{B-M}, Calder\'on-Moreno and Narv\'aez-Macarro \cite{Espanha}, Damon \cite{Damon}, Dimca \cite{Dimca-book, Dimca}, Dimca and Sticlaru \cite{D-S}, Miranda-Neto \cite{Cleto, Cleto2}, Schenck \cite{Surv}, Schenck, Terao and Yoshinaga \cite{STY}, Schenck and Toh\v{a}neanu \cite{S-T}, Simis and Toh\v{a}neanu \cite{Simis-T}, Toh\v{a}neanu \cite{To}, and Yoshinaga \cite{Yo}. In particular, nice references containing interesting open problems on the subject (including the celebrated Terao's Conjecture) are Dimca's book \cite{Dimca-book} and Schenck's survey \cite{Surv}.

In the present paper, the general goal is to present progress on the algebraic side of the theme, by means of various techniques. First, we explicitly describe four new families of homogeneous free divisors in standard graded polynomial rings over a field $k$ with ${\rm char}\,k=0$. Second, and in the same graded setup, we turn our angle to investigating blowup algebras of Jacobian ideals of polynomials. More precisely, we prove that for our families the Rees algebra is  Cohen-Macaulay -- i.e., from a geometric point of view, blowing-up their singular loci yields arithmetically Cohen-Macaulay schemes. Furthermore we characterize in various ways, via tools varying from (local) cohomology to asymptotic depth, the maximality of the dimension of the special fiber ring for polynomials which are no longer required to be free. The relevance of the latter lies in connections to the important theory of homaloidal divisors, i.e., homogeneous polynomials $f\in k[x_1, \ldots, x_n]$ (where, typically, the field $k$ is assumed to be algebraically closed) for which the associated polar map 
$$\mathscr{P}_f=\left(\frac{\partial f}{\partial x_1} \, : \, \cdots \, : \, \frac{\partial f}{\partial x_n}\right):\pp^{n-1}\dasharrow \pp^{n-1}$$
is birational -- i.e., $\mathscr{P}_f$ is a Cremona transformation. In fact we provide, as an application, an ideal-theoretic (also homological) homaloidness criterion. It is worth mentioning that the modern theory about such polynomials began in Ein and  Shepherd-Barron \cite{E-Barron}, where it was proved for instance that the relative invariant of a regular prehomogeneous complex vector space is homaloidal. Another classical reference is Dolgachev \cite{Dolg}.

In order to introduce the other main concepts of interest to this paper, let $k$ denote a field of characteristic zero and, given $n\geq 3$, let $R=k[x_1, \ldots, x_n]$ be a standard graded polynomial ring over $k$. Let $R_+=(x_1, \ldots, x_n)$ be the irrelevant ideal. Given a non-zero reduced homogeneous polynomial $f\in R_+^2$ (whose partial derivatives will be assumed, to avoid pathologies, to be $k$-linearly independent), it is well-known that the property of $f$ being a free divisor can be translated into saying that the corresponding Jacobian ideal $J_f=(\partial f/\partial x_1, \ldots, \partial f/\partial x_1)\subset R$ is perfect of codimension 2 -- in particular, a free $f$ must be highly singular in the sense that ${\rm codim}\,{\rm Sing}\,V(f) = 2$ regardless of $n$. Therefore, the intervention of $J_f$ in the theory is (naturally) crucial, and as a bonus this allows for an interesting link to the study of blowup algebras, particularly the traditional problem of describing ideals for which such rings are Cohen-Macaulay. Here, we are especially interested in the Rees algebra $$\mathscr{R}(J_f) \, = \, R\left[\frac{\partial f}{\partial x_1}t, \ldots, \frac{\partial f}{\partial x_n}t\right] \, \subset \, R[t]$$ and its special fiber ring $\mathscr{F}(J_f)=\mathscr{R}(J_f)\otimes_Rk$, which, as is well-known from blowup theory, encode relevant geometric information. Recall that the analytic spread of $J_f$, denoted $\ell(J_f)$, is the Krull dimension of $\mathscr{F}(J_f)$, which is bounded above by $n$. Saying that $J_f$ has {\it maximal} analytic spread means $\ell(J_f)=n$.

Next we briefly describe the contents of each section of the paper.

Section \ref{Prelims} invokes the definitions that are central to this paper, such as the notions of free divisor and blowup algebras of ideals, as well as a few auxiliary facts which will be used in some parts of the paper. Also, some conventions are established.

In Section \ref{linfreediv} we present our first family of free divisors in $R$, with $n\geq 4$. They are reducible and, in fact, {\it linear} in the sense that in addition the Jacobian ideal $J_f$ is linearly presented, i.e., the entries of the corresponding Hilbert-Burch matrix are (possibly zero) linear forms. We also determine the defining equations of $\mathscr{F}(J_f)$ and compute the analytic spread as well as the reduction number of $J_f$. Moreover, we prove that $\mathscr{R}(J_f)$ and $\mathscr{F}(J_f)$ are  Cohen-Macaulay.

Section \ref{secondfam} describes our second family of free divisors, in an even number of at least 4 variables, and again reducible and linear in the above sense. We exhibit a well-structured minimal set of generators for the module of syzygies of $J_f$. In addition, as in the previous family -- but via different methods -- we show that $\mathscr{R}(J_f)$ and $\mathscr{F}(J_f)$ are Cohen-Macaulay (the latter is in fact shown to be a generic determinantal ring) and determine the analytic spread and the reduction number of $J_f$.

In Section \ref{thirdfam}, our third family is presented as a two-parameter family of (no longer linear) free divisors $f=f_{\alpha, \beta}$ in 3 variables and of degree $\alpha \beta$, where $\alpha, \beta \geq 2$. For the one-parameter family with $\beta =2$ (also for $(\alpha, \beta) = (2, 3)$), we show that $\mathscr{R}(J_f)$ is Cohen-Macaulay and derive that $\mathscr{F}(J_f)$ is isomorphic to a polynomial ring over $k$ (so that $J_f$ has reduction number zero). Also we prove that $f$ is reducible if $k={\mathbb C}$, and in case $k\subseteq {\mathbb R}$ we verify that $f$ is reducible if $\beta$ is odd and irreducible otherwise. In addition, if $\beta \geq 3$ is odd, we show how to derive yet another two-parameter family of free divisors $g=g_{\alpha, \beta}$ (of degree $\alpha \beta - \alpha$) from $f_{\alpha, \beta}$; for the one-parameter family with $\beta =3$, we deduce that $\mathscr{R}(J_g)$ is Cohen-Macaulay.

In Section \ref{fourth} we introduce our fourth family of free divisors, in 3 variables. Such (reducible) divisors have the linearity property -- as in the two first families -- and are constructed as the determinant of the Jacobian matrix of a set of quadrics which we associate to 3 given suitable linear forms. This family is in fact partially new, because if $k={\mathbb C}$ then its members are recovered by the well-known classification of linear free divisors in at most 4 variables, whereas on the other hand our (permanent) assumption on $k$ is that ${\rm char}\,k=0$. Furthermore, we show that $J_f$ is of linear type -- i.e., the canonical epimorphism from the symmetric algebra of $J_f$ onto $\mathscr{R}(J_f)$ is an isomorphism -- and we derive that $\mathscr{R}(J_f)$ is a complete intersection ring. 

For $f\in R$ belonging to any of the four (or five) families, we use a result from Miranda-Neto \cite{Cleto} to easily determine the Castelnuovo-Mumford regularity of the graded module ${\rm Der}_k(R/(f))$ formed by the $k$-derivations of the ring $R/(f)$. Needless to say, the regularity is an important invariant which controls the complexity of a module (being related to bounds on the degrees of syzygies), whereas the derivation module is a classical object as it collects the tangent vector fields defined on the hypersurface $V(f)\subset {\mathbb P}_k^{n-1}$. 

We close the paper with Section \ref{maxell}, where we address the question as to when, for a (not necessarily free) polynomial $f\in R$, the ideal $J_f$ has maximal analytic spread -- the relevance of this task is the already mentioned connection to the theory of homaloidal divisors. We provide a number of characterizations of when such maximality holds, including cohomological conditions on a suitable auxiliary module as well as the asymptotic depth associated to both adic and integral closure filtrations of $J_f$. We also point out that the main problems we raise in this paper appear in this section. This includes a conjecture predicting that if $f$ is linear free divisor satisfying $\ell (J_f)=n$ then $J_f$ is of linear type (the case of interest is $n\geq 5$), as well as the question of whether the reduced Hessian determinant of a homaloidal polynomial must necessarily be a (linear) free divisor. For all such problems we were motivated and guided by several examples, and the computations were performed with the aid of the program {\it Macaulay} of Bayer and Stillman \cite{Mac}.

\section{Preliminaries: Free divisors and blowup algebras}\label{Prelims}

We begin by invoking some definitions and auxiliary facts. First we establish the convention that, throughout the entire paper, $k$ denotes a field of characteristic zero. A few other conventions (including notations) will be made in this section. Let $R=k[x_1, \ldots, x_n]$ be a standard graded polynomial ring in $n\geq 3$ indeterminates $x_1, \ldots, x_n$ over $k$, and let $R_+=(x_1, \ldots, x_n)$ denote the homogeneous maximal ideal of $R$. 

\subsection{Free divisors}\label{free divisors} Fix a non-zero homogeneous polynomial $f\in R_+^2$. A {\it logarithmic derivation of $f$} is an operator $\theta =\sum_{i=1}^ng_i\partial /\partial x_i$, for homogeneous polynomials $g_1, \ldots, g_n\in R$ satisfying $$\theta (f) \, = \, \sum_{i=1}^ng_i\frac{\partial f}{\partial x_i} \, \in \, (f).$$ Geometrically, $\theta$ can be interpreted as a vector field defined on ${\mathbb P}_k^{n-1}$ that is tangent along the (smooth part of the) hypersurface $V(f)$. From now on we suppose $f$ is reduced in the usual sense that $f_{\rm red}=f$, that is, $f$ is (at most) a product of distinct irreducible factors. In addition, we assume throughout -- with no further mention -- that the partial derivatives of $f$ are $k$-linearly independent so as to prevent $f$ from being a {\it cone} (recall that a polynomial $g\in R$ is a {\it cone} if, after some linear change of coordinates, $g$ depends on at most $n-1$ variables). Denote by $T_{R/k}(f)$ the $R$-module formed by the logarithmic derivations of $f$, which is also called {\it tangential idealizer} (or {\it Saito-Terao module}) of $f$, and commonly denoted ${\rm Derlog}(-V(f))$. It is easy to see that $T_{R/k}(f)$ has (generic) rank $n$ as an $R$-module.

\begin{Definition}\rm $f$ is a {\it free divisor} if the $R$-module $T_{R/k}(f)$ is free.
\end{Definition}

This concept, which originated in \cite{S}, has been shown to be of great significance to a variety of branches in mathematics. We recall yet another classical object.

\begin{Definition}\rm If $f_{x_i}:=\partial f/\partial x_i$, $i=1, \ldots, n$, then the {\it Jacobian ideal of $f$} (also called {\it gradient ideal of $f$}) is given by $$J_f \, = \, (f_{x_1}, \ldots, f_{x_n}) \, \subset \, R.$$
\end{Definition}


Note that, because $f$ is not a cone, the ideal $J_f$ is {\it minimally} generated by the $n$ partial derivatives of $f$.
Also recall that the {\it Euler derivation} $$\varepsilon_n \, := \, \sum_{i=1}^nx_i\frac{\partial}{\partial x_i}$$ is logarithmic for the homogeneous polynomial $f$ by virtue of the well-known Euler's identity $\sum_{i=1}^nx_if_{x_i}=({\rm deg}\,f) f$. Now we remark that, since $T_{R/k}(f)$ decomposes into the direct sum of the module of syzygies of $J_f$ and the cyclic module $R\varepsilon_n$ (see \cite[Lemma 2.2]{Cleto}), a free basis of $T_{R/k}(f)$ if $f$ is a free divisor consists of the derivations corresponding to the columns of a minimal syzygy matrix of $f_{x_1}, \ldots, f_{x_n}$ together with $\varepsilon_n$.

Next we recall a useful characterization which is even adopted as the definition of free divisor by some authors,  and moreover highlights the central role that commutative algebra plays in the theory.

\begin{Lemma}$($\cite[Lemma 4.1]{Cleto}$)$\label{commut} $f\in R$ is a free divisor if and only if\
$J_f$ is a codimension $2$ perfect ideal $($equivalently, the ideal $J_f$ has projective dimension 1$)$.
    
\end{Lemma}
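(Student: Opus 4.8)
The plan is to read the homological characterization off the direct sum decomposition $T_{R/k}(f)=\syz(J_f)\oplus R\varepsilon_n$ recorded above, thereby reducing the freeness of $f$ to a statement about the syzygies of $J_f$, and then to translate that statement into the language of perfect ideals. First I would observe that $R\varepsilon_n\cong R$ is free of rank one, since $\varepsilon_n$ is a nonzero element of the torsion-free module of $k$-derivations $\dera{R}\cong R^n$. Because a finitely generated graded projective $R$-module is free and a direct summand of a free module is projective, it follows at once that $T_{R/k}(f)$ is free if and only if the complementary summand $\syz(J_f)$ is free. Thus $f$ is a free divisor precisely when $\syz(J_f)$ is free.

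Next I would exploit the presentation $0\to\syz(J_f)\to R^n\to J_f\to 0$ afforded by the $n$ minimal generators $f_{x_1},\ldots,f_{x_n}$. If $\syz(J_f)$ is free this is a length-one free resolution, so ${\rm pd}\,J_f\le 1$; conversely, as the presentation is minimal, ${\rm pd}\,J_f=1$ means the minimal free resolution of $J_f$ is exactly $0\to\syz(J_f)\to R^n\to J_f\to 0$ with $\syz(J_f)$ free. Moreover ${\rm pd}\,J_f\neq 0$, because $J_f$ is minimally generated by $n\ge 3$ elements and hence is not principal, so not free as an ideal of the domain $R$. Combining these remarks, $\syz(J_f)$ is free if and only if ${\rm pd}\,J_f=1$, which by the sequence $0\to J_f\to R\to R/J_f\to 0$ is equivalent to ${\rm pd}\,(R/J_f)=2$. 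Recall finally that $J_f$ is perfect exactly when ${\rm pd}\,(R/J_f)=\grade J_f$, and that $\grade J_f=\codim J_f$ since $R$ is Cohen-Macaulay.

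The one genuinely arithmetic input -- and the step I expect to be the main obstacle -- is to show that $\codim J_f\ge 2$, for which I would use that $f$ is reduced and ${\rm char}\,k=0$. I would argue inside the UFD $R$: if some height-one prime $(g)$, with $g$ irreducible, contained $J_f$, then Euler's identity $(\deg f)\,f=\sum_i x_i f_{x_i}\in (g)$, together with $\deg f$ being a unit, would give $f=g\,h$; differentiating $f_{x_i}=g_{x_i}h+g\,h_{x_i}$ then yields $g_{x_i}h\in (g)$ for every $i$, and since $\deg g_{x_i}<\deg g$ some nonzero $g_{x_i}$ is coprime to $g$, forcing $g\mid h$ and hence $g^2\mid f$, contradicting that $f$ is reduced. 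Granting $\codim J_f\ge 2$, both implications drop out: if $f$ is free then ${\rm pd}\,(R/J_f)=2$, so $\codim J_f\le 2$ by the grade bound $\grade J_f\le{\rm pd}\,(R/J_f)$ and therefore $\codim J_f=2$ with $J_f$ perfect; conversely, if $J_f$ is perfect of codimension $2$ then ${\rm pd}\,(R/J_f)=\grade J_f=2$, whence $\syz(J_f)$ is free and $f$ is a free divisor. The parenthetical equivalence with ${\rm pd}\,J_f=1$ is precisely the intermediate step already established.
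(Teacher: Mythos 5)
Your proof is correct and complete: the reduction of freeness to freeness of $\syz (J_f)$ via the decomposition $T_{R/k}(f)=\syz (J_f)\oplus R\varepsilon_n$, the minimality argument giving ``$\syz (J_f)$ free $\Leftrightarrow$ ${\rm projdim}\,J_f=1$'', and the Euler-identity argument forcing $\codim J_f\geq 2$ for reduced $f$ in characteristic zero are all sound. Note that the paper itself offers no proof of this statement (it is quoted from \cite[Lemma 4.1]{Cleto}); your argument is exactly the route suggested by the ingredients the paper assembles around the lemma (the direct sum decomposition from \cite[Lemma 2.2]{Cleto}, minimal generation of $J_f$ by the $n$ partials since $f$ is not a cone, and perfection read through the Auslander--Buchsbaum/grade comparison), so it fills the gap in the expected way.
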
 

In other words, $f$ is a free divisor if and only if $R/J_f$ is a Cohen-Macaulay ring and $\hht J_f=2$, where, here and in the entire paper, $\hht I$ stands for the height of an ideal $I\subset R$. It follows that the classical Hilbert-Burch theorem plays a major role in the algebraic side of free divisor theory. It is also worth mentioning that this fruitful interplay holds in a more general setting (see \cite{Cleto2}).

Below we invoke a well-known and very useful criterion of freeness detected by Saito himself in case $k={\mathbb C}$, but which is known to hold over any field of characteristic zero (see \cite[Theorem 2.4]{B-C}).

\begin{Lemma}$($\cite[Theorem 1.8(ii)]{S}, {\rm also} \cite[Theorem 8.1]{Dimca-book}$)$\label{Saito-crit} $f\in R$ is a
free divisor if and only if there exist $n$ vector fields $\theta_1, \ldots, \theta_n\in T_{R/k}(f)$ such that $${\rm det}\,[\theta_j(x_i)]_{i,j=1,\ldots, n} = \,\lambda f$$ for some non-zero $\lambda \in k$. In this case, the set $\{\theta_1, \ldots, \theta_n\}$ is a free basis of $T_{R/k}(f)$.
    
\end{Lemma}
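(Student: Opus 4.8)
The plan is to prove both implications by analysing the \emph{Saito (coefficient) determinant} $D=\det M$, where $M=[\theta_j(x_i)]_{i,j}=(a_{ij})$ is the matrix whose $j$-th column records the coefficients of $\theta_j=\sum_i a_{ij}\,\partial/\partial x_i$; thus $\theta_j(x_i)=a_{ij}$. The engine of the whole argument is a divisibility fact that I would isolate first: \textbf{for any $n$-tuple $\theta_1,\dots,\theta_n\in T_{R/k}(f)$ one has $f\mid D$.} To see this, write $\theta_j(f)=\sum_i a_{ij}f_{x_i}=b_jf$ for suitable $b_j\in R$ (the logarithmic condition), so that, reducing modulo $f$, the gradient vector $\overline{\nabla f}=(\overline{f_{x_1}},\dots,\overline{f_{x_n}})^{\mathsf T}$ lies in the kernel of $\overline{M}^{\mathsf T}$ over $R/(f)$. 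Since $f$ is reduced and not a cone, at each minimal prime $(g)$ of $(f)$ (with $g$ an irreducible factor of $f$) the hypersurface $V(g)$ is generically smooth, so some $f_{x_i}\notin(g)$ and $\overline{\nabla f}$ localizes to a nonzero kernel vector over the residue field $\kappa(g)$; hence $\det\overline{M}\equiv 0$ at $(g)$, i.e.\ $g\mid D$. As $f=\prod g$ is squarefree this yields $f\mid D$.

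For the implication ($\Leftarrow$), assume $\theta_1,\dots,\theta_n\in T_{R/k}(f)$ with $D=\lambda f$, $\lambda\in k^{*}$. Then $D\neq 0$ forces the $\theta_j$ to be $R$-linearly independent, so $F:=\sum_j R\theta_j\subseteq T_{R/k}(f)$ is free of rank $n$; since $T_{R/k}(f)$ also has rank $n$ it suffices to show $F=T_{R/k}(f)$. Given $\eta=\sum_i c_i\,\partial/\partial x_i\in T_{R/k}(f)$, I would solve $M\mathbf{z}=\mathbf{c}$ over $\mathrm{Frac}(R)$ by Cramer's rule, getting $z_j=\det M_j/D$, where $M_j$ is $M$ with its $j$-th column replaced by $\mathbf{c}=(c_i)$. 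The point is that every column of $M_j$ is now the coefficient vector of a logarithmic derivation (the $\theta_i$ for $i\neq j$, and $\eta$ for $i=j$), so the divisibility fact applies to $M_j$ and gives $f\mid\det M_j$. Hence $z_j=\det M_j/(\lambda f)\in R$ for all $j$, so $\eta=\sum_j z_j\theta_j\in F$. Thus $T_{R/k}(f)=F$ is free with basis $\{\theta_j\}$, i.e.\ $f$ is a free divisor.

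For ($\Rightarrow$), suppose $f$ is free, so $T_{R/k}(f)$ is free of rank $n$ with basis $\theta_1,\dots,\theta_n$, and set $D=\det M\neq 0$. The key device is the exact sequence
$$0\longrightarrow T_{R/k}(f)\stackrel{M}{\longrightarrow}\dera{R}=R^n\stackrel{\bar\psi}{\longrightarrow}J_f/(f)\longrightarrow 0,\qquad \bar\psi(\eta)=\overline{\eta(f)},$$
whose exactness uses only the definition of $T_{R/k}(f)$ together with Euler's identity (which places $f\in J_f$, so that the image is indeed $J_f/(f)$). Writing $C=J_f/(f)$, I would argue in two steps. First, $C$ is annihilated by $f$, so $fR^n\subseteq\mathrm{im}\,M$; expressing each $f\cdot(\partial/\partial x_i)$ in the basis $\{\theta_j\}$ produces a matrix $N$ over $R$ with $MN=f I_n$, whence $D\cdot\det N=f^{\,n}$ and therefore every irreducible factor of $D$ divides $f$. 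Second, to pin down multiplicities I would localize at each minimal prime $(g)$ of $(f)$: as above $(J_f)_{(g)}=R_{(g)}$, so $C_{(g)}\cong R_{(g)}/(f)R_{(g)}$ has length $1$, and by the elementary divisor theorem over the discrete valuation ring $R_{(g)}$ the square presentation of $C_{(g)}$ by the localization of $M$ forces $\mathrm{ord}_g(D)=1=\mathrm{ord}_g(f)$. Combining the two steps, $D$ and $f$ share the same irreducible factors, each with multiplicity one, so $D=\lambda f$ for some $\lambda\in k^{*}$, and $\{\theta_j\}$ is the asserted basis.

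The main obstacle is the control of the local behaviour of $C=J_f/(f)$ at the generic points of $V(f)$: everything hinges on the reducedness of $f$ (and the non-cone hypothesis), which is exactly what guarantees that the partials $f_{x_i}$ do not all vanish along any component, thereby turning the logarithmic condition into a rank drop of $M$ modulo $f$ and pinning the length of $C_{(g)}$ to $1$. Treating a possibly reducible $f$ cleanly---passing to each minimal prime rather than assuming $R/(f)$ is a domain---is the delicate part of the write-up.
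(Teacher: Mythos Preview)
The paper does not supply a proof of this lemma at all; it is merely quoted from the literature (Saito's original paper and Dimca's book). So there is no ``paper's own proof'' to compare against.

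Your argument is correct and is essentially the classical proof of Saito's criterion. The key divisibility fact $f\mid D$ for any $n$-tuple of logarithmic derivations, followed by Cramer's rule for $(\Leftarrow)$ and the Fitting-ideal/length computation over the DVRs $R_{(g)}$ for $(\Rightarrow)$, is exactly how the result is established in the cited sources. Two minor remarks: in the divisibility step, the phrase ``$V(g)$ is generically smooth, so some $f_{x_i}\notin(g)$'' elides the chain rule computation $f_{x_i}\equiv g_{x_i}h\pmod g$ (with $f=gh$, $h\notin(g)$), which is what actually transfers generic smoothness of $V(g)$ to nonvanishing of $\nabla f$ there; and in the forward direction you do not need Euler's identity to see that $f\cdot\dera R\subseteq T_{R/k}(f)$ (this is immediate from $(f\,\partial/\partial x_i)(f)=f\cdot f_{x_i}\in(f)$), so the reference to Euler is incidental. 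Neither point affects the validity of the proof.
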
 

As already pointed out, up to elementary operations in the columns of an $n\times (n-1)$ syzygy matrix $\phi$ of $J_f$, the derivations $\theta_j$'s of the free basis above correspond to the columns of $\phi$ along with the Euler vector field $\varepsilon_n$.

There is also the following important subclass introduced in \cite{B-M}.

\begin{Definition}\rm $f$ is a {\it linear free divisor} if $f$ is a free divisor and the ideal $J_f$ is linearly presented.
\end{Definition}

Stated differently, $f$ is a linear free divisor if and only if $J_f$ admits a minimal graded $R$-free resolution of the form $$0\longrightarrow R(-n)^{n-1}\longrightarrow R(-n+1)^n\longrightarrow J_f\longrightarrow 0.$$ In particular, the degree of a linear free divisor is necessarily equal to $n$ (and thus has minimal degree, since any free divisor is seen to have degree at least $n$).

Now let us provisionally consider a more general setup. Let $S$ be any Noetherian commutative ring containing $k$. A {\it $k$-derivation of $S$} is defined as an additive map $\vartheta \colon S\rightarrow S$ which vanishes on $k$ and satisfies Leibniz' rule: $\vartheta(uv)=u\vartheta (v)+v\vartheta (u)$, for all $u, v\in S$. Such objects are collected in an $S$-module, denoted ${\rm Der}_k(S)$. In particular, if again $R=k[x_1, \ldots, x_n]$, we get the $R$-module ${\rm Der}_k(R)$, which is free on the $\partial /\partial x_i$'s. Now if $f\in R_+^2$ is as above, we can also consider the derivation module ${\rm Der}_k(R/(f))$, which can be graded as follows. First, assume that ${\rm Der}_k(R)$ is given the grading inherited from the natural ${\mathbb Z}$-grading of the Weyl algebra of $R$, so that each $\partial /\partial x_i$ has degree $-1$. We endow $T_{R/k}(f)$ with the induced grading from ${\rm Der}_k(R)$, that is, a logarithmic derivation  $\sum_{i=1}^ng_i\partial /\partial x_i\in T_{R/k}(f)$  has degree $\delta$ if $g_1, \ldots, g_n\in R$ have degree $\delta + 1$. For example, $\varepsilon_n\in [T_{R/k}(f)]_0$. Finally recall that there is an identification 
(see, e.g., \cite[Lemma 2.1]{Cleto})
$${\rm Der}_k(R/(f)) \, = \, T_{R/k}(f)/f{\rm Der}_k(R).$$ Then we let ${\rm Der}_k(R/(f))$ be graded with the grading induced from $T_{R/k}(f)$ by means of this quotient.

The next auxiliary lemma is concerned with the graded module ${\rm Der}_k(R/(f))$. Let us first recall the concept of Castelnuovo-Mumford regularity of a finitely generated graded module $E$ over the graded polynomial ring $R$. Let $0\rightarrow F_p\rightarrow \ldots \rightarrow F_0\rightarrow E\rightarrow 0$ be a minimal graded $R$-free resolution of $E$, where $F_i:=\bigoplus_{j=1}^{b_i} R(-a_{i,j})$, $i=0, \ldots, p$. Note that $p$ is the projective dimension of $E$.


\begin{Definition}\rm If
$m_i:={\rm max}\{a_{i, j}\,|\, 1\leq j\leq b_i\}$, $i=0, \ldots, p$, then the {\it Castelnuovo-Mumford regularity} of $E$ is defined as ${\rm reg}\,E  = {\rm max} \{m_i
- i \, | \, 0\leq i\leq p\}$.
\end{Definition}
 
This gives in some sense a numerical measure of the complexity of the module. There are more general definitions given in terms of sheaf and local cohomologies (which in turn are related), but the one given above suffices for our purposes in this paper. We refer, e.g., to \cite{Bayer-M} and \cite[Chapter 15]{B-S}.

\begin{Lemma}$($\cite[Corollary 2.5(i)]{Cleto}$)$\label{regder} If $f\in R$ is a
free divisor of degree $d$, then ${\rm reg}\,{\rm Der}_k(R/(f))=d-2$. In particular, if $f\in R$ is a linear
free divisor then ${\rm reg}\,{\rm Der}_k(R/(f))=n-2$.
    
\end{Lemma}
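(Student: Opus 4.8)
The plan is to exhibit an explicit minimal graded free resolution of the $R$-module ${\rm Der}_k(R/(f))$ of length one and then read off its Castelnuovo--Mumford regularity directly from the definition above. The point of departure is the identification ${\rm Der}_k(R/(f)) = T_{R/k}(f)/f{\rm Der}_k(R)$ recalled just before the statement, which produces the short exact sequence of graded $R$-modules
$$0 \longrightarrow f{\rm Der}_k(R) \longrightarrow T_{R/k}(f) \longrightarrow {\rm Der}_k(R/(f)) \longrightarrow 0.$$
Since $f$ is a free divisor, $T_{R/k}(f)$ is free by definition, and ${\rm Der}_k(R)$ is free on the $\partial/\partial x_i$; thus both outer terms are free and the displayed sequence is already a free resolution of ${\rm Der}_k(R/(f))$ of length one. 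The proof then reduces to two tasks: pinning down the graded twists in the two free modules, and verifying that the resolution is minimal.

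For the first task I would invoke the decomposition $T_{R/k}(f) = \syz(J_f) \oplus R\varepsilon_n$ from \cite[Lemma 2.2]{Cleto}. Because $J_f$ is generated by the $n$ partials, all of degree $d-1$, and is perfect of codimension $2$ by Lemma \ref{commut}, the Hilbert--Burch theorem supplies a minimal graded resolution
$$0 \longrightarrow \bigoplus_{j=1}^{n-1} R(-b_j) \stackrel{\phi}{\longrightarrow} R(-(d-1))^n \longrightarrow J_f \longrightarrow 0,$$
so that $\syz(J_f)$ is free with generators of degrees $b_1, \ldots, b_{n-1}$ in the standard grading. The one genuinely delicate point is the change of grading: in ${\rm Der}_k(R)$ each $\partial/\partial x_i$ has degree $-1$, so a syzygy with entries of degree $c$ acquires degree $c-1$ as a derivation rather than the degree $c+(d-1)$ it carries as an element of $\syz(J_f)$; tracking this uniform shift by $-d$ shows that inside $T_{R/k}(f)$ the summand $\syz(J_f)$ has generators in degrees $b_j-d$, while $\varepsilon_n$ sits in degree $0$. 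Likewise $f{\rm Der}_k(R) \cong {\rm Der}_k(R)(-d)$ has its $n$ free generators in degree $d-1$. Hence the resolution takes the form
$$0 \longrightarrow R(-(d-1))^n \longrightarrow R \oplus \bigoplus_{j=1}^{n-1} R(-(b_j-d)) \longrightarrow {\rm Der}_k(R/(f)) \longrightarrow 0.$$

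For the second task and the final count I would extract two numerical facts from $\phi$. Each column of $\phi$ has positive degree, so $b_j \geq d$; and comparing degrees across the resolution of $J_f$ (equivalently, equating $\deg f_{x_i} = d-1$ with the degree $\sum_{j}(b_j-(d-1))$ of the corresponding maximal minor of $\phi$) gives $\sum_{j=1}^{n-1}(b_j-(d-1)) = d-1$, whence $\max_j b_j \leq 2d-n$. From these degree constraints one checks that every entry of the map $R(-(d-1))^n \to R \oplus \bigoplus_j R(-(b_j-d))$ has strictly positive degree, so the resolution is minimal. Reading the regularity off the definition, the homological-degree-$1$ term $R(-(d-1))^n$ contributes $(d-1)-1 = d-2$, whereas the homological-degree-$0$ term contributes $\max(0, \max_j(b_j-d)) \leq d-n \leq d-3$, which is strictly smaller because $n \geq 3$. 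Therefore ${\rm reg}\,{\rm Der}_k(R/(f)) = d-2$, and the linear free divisor case follows by setting $d=n$. The main obstacle I anticipate is precisely the careful reconciliation of the three gradings in play -- the syzygy grading of $J_f$, the Weyl-algebra grading on ${\rm Der}_k(R)$, and the induced quotient grading on ${\rm Der}_k(R/(f))$ -- together with securing the estimate $\max_j b_j \leq 2d-n$ that forces the homological-degree-$1$ term to dominate the regularity.
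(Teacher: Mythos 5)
Your proof is correct, and it is worth pointing out that the paper itself contains no proof of Lemma \ref{regder}: the statement is imported verbatim from \cite[Corollary 2.5(i)]{Cleto}. So your argument is, in effect, a self-contained reconstruction of the cited result, built entirely from tools the paper already assembles in Section \ref{Prelims}: the identification ${\rm Der}_k(R/(f)) = T_{R/k}(f)/f{\rm Der}_k(R)$, the splitting $T_{R/k}(f) = {\rm Syz}(J_f)\oplus R\varepsilon_n$ from \cite[Lemma 2.2]{Cleto}, the perfection of $J_f$ (Lemma \ref{commut}) feeding into Hilbert--Burch, and the Weyl-algebra grading conventions. All the delicate points in your write-up check out: the uniform shift converting the syzygy grading to the derivation grading (a column of $\phi$ with entries of degree $c_j = b_j-(d-1)$ yields a basis derivation of degree $c_j-1 = b_j-d$, while $f\,\partial/\partial x_i$ sits in degree $d-1$); the identity $\sum_j c_j = d-1$, which follows from the Hilbert--Burch description of the partials as the maximal minors of $\phi$ and, combined with $c_j\geq 1$, gives $\max_j b_j\leq 2d-n$; the minimality of your length-one resolution (every comparison entry has degree $d-1$ or $2d-1-b_j\geq n-1$, hence positive), which is genuinely needed since the paper's definition of regularity reads it off a \emph{minimal} resolution; and the final comparison $m_0\leq d-n<d-2=m_1-1$, valid because $n\geq 3$ and $d\geq n$. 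What your approach buys is precisely this self-containedness: a reader of the paper can verify the lemma without consulting \cite{Cleto}, and the argument makes transparent why the answer $d-2$ is forced by the degree $d-1$ of the generators $f\,\partial/\partial x_i$ of the first syzygy module, with the degree-zero term never competing.
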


It is worth mentioning that some authors have investigated the Castelnuovo-Mumford regularity of other objects that are also ``differentially related" to $f$, such as the Milnor algebra $R/J_f$ (see \cite{BDSS}) and the module $T_{R/k}(f)$ itself (see \cite[Theorem 5.5]{D-Sid} and \cite[Section 3]{S0}).

\subsection{Blowup algebras} We close the section with a brief review on blowup algebras and a few closely related notions. We fix a homogeneous proper ideal $I$ of $R$. 

\begin{Definition}\rm The {\it Rees algebra of $I$} is the graded ring
$$\mathscr{R}(I) \, = \, \bigoplus_{i\geq 0}I^it^i \, \subset \, R[t],$$ where $t$ is an indeterminate over $R$. This $R$-algebra defines the blowup along the subscheme corresponding to $I$. The {\it special fiber ring of $I$}, sometimes dubbed {\it fiber cone of $I$}, is the special fiber of $\mathscr{R}(I)$, i.e., the (standard) graded $k$-algebra $$\mathscr{F}(I) \, = \, \mathscr{R}(I)\otimes_Rk \, \cong \, \bigoplus_{i\geq 0}I^i/R_+I^i.$$ The {\it analytic spread of $I$} is $\ell (I)={\rm dim}\, \mathscr{F}(I)$. There are bounds $\hht I\leq \ell (I)\leq n$.

Alternatively, $\mathscr{R}(I)$ can be realized as the quotient of the symmetric algebra ${\rm Sym}_RI$ (a basic construct in algebra) by its $R$-torsion submodule, which is in fact an ideal. Thus there is a natural $R$-algebra epimorphism ${\rm Sym}_RI\rightarrow \mathscr{R}(I)$. If this map is an isomorphism, $I$ is said to be {\it of linear type}. Since $R$ is in particular a domain, this is tantamount to saying that 
${\rm Sym}_RI$ is a domain as well. For instance, any ideal generated by a regular sequence is of linear type.
\end{Definition}

Next we provide a useful formula for the computation of the analytic spread by means of a Jacobian matrix (in characteristic zero, as we have permanently assumed). To this end we consider an even more concrete description of the Rees algebra (hence of its special fiber), to wit, if we fix generators $I=(f_1, \ldots, f_{\nu})\subset R=k[x_1, \ldots, x_n]$, then $\mathscr{R}(I)$ is just the $R$-subalgebra generated by $f_1t, \ldots, f_{\nu}t \in R[t]$. In the particular case where the $f_i$'s are all homogeneous of the same degree -- e.g., the partial derivatives of a homogeneous polynomial -- we can write the special fiber as $\mathscr{F}(I)  \cong  k[f_1, \ldots, f_{\nu}]$ as a $k$-subalgebra of $R$.

\begin{Lemma}$($\cite[Proposition 1.1]{Si}$)$\label{rank} Write $I=(f_1, \ldots, f_{\nu})$ and suppose all the $f_i$'s are homogeneous of the same degree. Set $\Theta := \left(\frac{\partial f_i}{\partial x_j}\right)$, $1\leq i\leq \nu,\, 1\leq j\leq n$. Then $\ell(I)={\rm rank}\,\Theta$.
    
\end{Lemma}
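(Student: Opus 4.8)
The plan is to reduce the computation of $\ell(I)={\rm dim}\,\mathscr{F}(I)$ to the classical Jacobian criterion for algebraic independence, which is available precisely because ${\rm char}\,k=0$. First I would use that, since the $f_i$ are homogeneous of one and the same degree, the special fiber ring is realized as the $k$-subalgebra $\mathscr{F}(I)\cong k[f_1, \ldots, f_{\nu}]\subset R$, as already recorded above. Being a subring of the domain $R$, this algebra is itself a finitely generated $k$-domain, so its Krull dimension equals the transcendence degree of its fraction field over $k$; that is, $\ell(I)=\operatorname{trdeg}_k k(f_1, \ldots, f_{\nu})$. Thus it suffices to prove that this transcendence degree coincides with ${\rm rank}\,\Theta$, where the rank is taken over the field of fractions $L:=k(x_1, \ldots, x_n)$ of $R$ (equivalently, it is the largest size of a non-vanishing minor of $\Theta$).

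Next I would interpret ${\rm rank}\,\Theta$ differentially. Since $L$ is purely transcendental over $k$, the module of K\"ahler differentials $\Omega_{L/k}$ is free over $L$ with basis $dx_1, \ldots, dx_n$, and the universal derivation sends $f_i\mapsto df_i=\sum_{j=1}^n (\partial f_i/\partial x_j)\,dx_j$. Consequently the coordinate matrix of the family $df_1, \ldots, df_{\nu}$ with respect to this basis is exactly $\Theta$, whence ${\rm rank}\,\Theta=\dim_L\langle df_1, \ldots, df_{\nu}\rangle_L$, the $L$-dimension of the subspace of $\Omega_{L/k}$ spanned by the $df_i$.

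The crux is therefore to show that $\dim_L\langle df_1, \ldots, df_{\nu}\rangle_L=\operatorname{trdeg}_k k(f_1, \ldots, f_{\nu})$, and this is where the hypothesis ${\rm char}\,k=0$ is essential. I would argue through the standard Jacobian criterion: after reordering, let $f_1, \ldots, f_r$ be a transcendence basis of $k(f_1, \ldots, f_{\nu})$ over $k$, so that $r=\operatorname{trdeg}_k k(f_1, \ldots, f_{\nu})$ and each remaining $f_i$ is algebraic over $k(f_1, \ldots, f_r)$. In characteristic zero the ground field is perfect, hence every finitely generated extension is separably generated, and one has, on the one hand, the $L$-linear independence of $df_1, \ldots, df_r$ (algebraically independent elements over a perfect field have independent differentials) and, on the other hand, that each $df_i$ with $i>r$ lies in the $L$-span of $df_1, \ldots, df_r$ (differentiate a minimal algebraic relation, using separability to ensure the relevant partial derivative is nonzero). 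Combining these two facts yields $\dim_L\langle df_1, \ldots, df_{\nu}\rangle_L=r$, completing the chain $\ell(I)=\operatorname{trdeg}_k k(f_1, \ldots, f_{\nu})=\dim_L\langle df_1, \ldots, df_{\nu}\rangle_L={\rm rank}\,\Theta$. The main obstacle, and the only point requiring genuine input beyond bookkeeping, is this separability step: in positive characteristic a relation such as $f_i=g^p$ would differentiate to zero and the desired equality can fail, so the argument truly depends on ${\rm char}\,k=0$. (Alternatively, I could package the same content cleanly via the first fundamental exact sequence of differentials for $k\subseteq k(f_1, \ldots, f_{\nu})\subseteq L$, identifying $\langle df_i\rangle_L$ with the image of $\Omega_{k(f_1, \ldots, f_{\nu})/k}\otimes L$ and invoking injectivity of that map under separability.)
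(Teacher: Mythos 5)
The paper does not prove this lemma at all; it quotes it verbatim from \cite[Proposition 1.1]{Si}, and your argument is precisely the standard proof of that cited result: identify $\mathscr{F}(I)$ with the subalgebra $k[f_1,\ldots,f_{\nu}]\subset R$, equate its Krull dimension with ${\rm trdeg}_k\,k(f_1,\ldots,f_{\nu})$, and use the characteristic-zero Jacobian criterion to equate this transcendence degree with ${\rm rank}\,\Theta$ computed over $k(x_1,\ldots,x_n)$. Your proof is correct and complete; the one wording caveat is the parenthetical claim that algebraically independent elements over a \emph{perfect} field have independent differentials, which is false in positive characteristic even over perfect fields (e.g., $x^p\in\mathbb{F}_p(x)$ is transcendental over $\mathbb{F}_p$ yet $d(x^p)=0$) and should be asserted only in characteristic zero, as the rest of your argument in fact does.
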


Finally recall that a subideal $K\subset I$ is a {\it reduction} of $I$ if the induced extension of Rees algebras $\mathscr{R}(K)\subset \mathscr{R}(I)$ is integral; equivalently, there exists $r\geq 0$ such that $I^{r+1}=KI^{r}$. The minimal such $r$ is denoted ${\rm r}_K(I)$. The reduction $K$ is {\it minimal} if it is minimal with respect to inclusion. Now
the {\it reduction number} of $I$ is defined as $${\rm r}(I) \, = \, {\rm min}\{{\rm r}_K(I) \mid \mbox{$K$ is a minimal reduction of $I$}\}.$$ For instance, it is a standard fact (as $k$ is infinite) that ${\rm r}(I)=0$ if and only if $I$ can be generated by $\ell(I)$ elements, which occurs if for example $I$ is of linear type. More generally, the following basic result gives a way to compute this number in the presence of a suitable condition on the standard graded $k$-algebra $\mathscr{F}(I)$, which can be also regarded (for the purpose of reading the Castelnuovo-Mumford regularity off a minimal graded free resolution) as a cyclic graded module over a polynomial ring $k[t_1, \ldots, t_{\nu}]$ whenever $I$ can be generated by $\nu$ forms in $R$.

\begin{Lemma}$($\cite[Proposition 1.2]{GST}$)$\label{r-reg} If $\mathscr{F}(I)$ is Cohen-Macaulay, then ${\rm r}(I)={\rm reg}\,\mathscr{F}(I)$.
\end{Lemma}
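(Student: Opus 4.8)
The plan is to descend the entire statement to the special fiber $B:=\mathscr{F}(I)$, viewed as a standard graded $k$-algebra with homogeneous maximal ideal $\mathfrak{m}=\bigoplus_{i\geq 1}[B]_i$, and to identify ${\rm r}(I)$ with the reduction number of $\mathfrak{m}$ in $B$. Since $k$ is infinite, a minimal reduction $K$ of $I$ is minimally generated by $\ell(I)$ elements, which we may take to be $k$-linear combinations of the generators $f_1,\dots,f_\nu$; their images $\bar z_1,\dots,\bar z_{\ell(I)}$ then lie in $[B]_1$. First I would record the equivalence, for $\mathfrak{q}:=(\bar z_1,\dots,\bar z_{\ell(I)})\subset B$, that $I^{r+1}=KI^r$ holds if and only if $\mathfrak{m}^{r+1}=\mathfrak{q}\,\mathfrak{m}^r$: the forward implication is immediate after tensoring with $k=R/R_+$, while the converse follows from graded Nakayama applied to $I^{r+1}=KI^r+R_+I^{r+1}$. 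Consequently ${\rm r}_K(I)={\rm r}_{\mathfrak{q}}(\mathfrak{m})$, minimal reductions of $I$ correspond (under this identification) to minimal reductions of $\mathfrak{m}$ generated by linear systems of parameters of $B$, and therefore ${\rm r}(I)=\min_{\mathfrak{q}}{\rm r}_{\mathfrak{q}}(\mathfrak{m})$, the minimum being taken over such $\mathfrak{q}$.

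Next I would exploit the hypothesis that $B$ is Cohen--Macaulay of dimension $\dim B=\ell(I)$. As $k$ is infinite, each such $\mathfrak{q}$ is generated by a homogeneous system of parameters consisting of linear forms, and in a Cohen--Macaulay ring any system of parameters is a regular sequence; hence $\bar B:=B/\mathfrak{q}$ is an Artinian standard graded $k$-algebra whose Hilbert series is $(1-t)^{\ell(I)}$ times that of $B$, i.e.\ the $h$-polynomial of $B$. In particular this series is the same for every minimal reduction $\mathfrak{q}$. Reducing the equality $\mathfrak{m}^{r+1}=\mathfrak{q}\,\mathfrak{m}^r$ modulo $\mathfrak{q}$ and using that $\bar B$ is generated in degree $1$, one checks that $\mathfrak{m}^{r+1}=\mathfrak{q}\,\mathfrak{m}^r$ holds precisely when $[\bar B]_j=0$ for all $j>r$; thus ${\rm r}_{\mathfrak{q}}(\mathfrak{m})=\max\{j:[\bar B]_j\neq 0\}$, the top nonvanishing degree of $\bar B$. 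Since this number is read off the reduction-independent $h$-polynomial, ${\rm r}_{\mathfrak{q}}(\mathfrak{m})$ does not depend on $\mathfrak{q}$, so the minimum defining ${\rm r}(I)$ is simply this common value.

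It then remains to identify $\max\{j:[\bar B]_j\neq 0\}$ with ${\rm reg}\,\mathscr{F}(I)$. For the Artinian algebra $\bar B$ the only nonvanishing local cohomology is $H^0_{\mathfrak{m}}(\bar B)=\bar B$, whence ${\rm reg}\,\bar B=\max\{j:[\bar B]_j\neq 0\}$; and because each $\bar z_i$ is a linear nonzerodivisor on the successive (Cohen--Macaulay) quotients, regularity is unchanged under passing from $B$ to $B/(\bar z_1,\dots,\bar z_{\ell(I)})=\bar B$, so ${\rm reg}\,B={\rm reg}\,\bar B$. Chaining the three identifications gives ${\rm r}(I)={\rm r}_{\mathfrak{q}}(\mathfrak{m})=\max\{j:[\bar B]_j\neq 0\}={\rm reg}\,\bar B={\rm reg}\,\mathscr{F}(I)$, as desired.

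The main obstacle I anticipate is the bookkeeping in the first step: one must verify carefully that a minimal reduction of $I$ really maps to a genuine minimal reduction of $\mathfrak{m}$ with the \emph{same} reduction number, which is exactly where the infiniteness of $k$, the Northcott--Rees structure of minimal reductions, and graded Nakayama all intervene. By contrast, the role of the Cohen--Macaulay hypothesis is precisely to force $\bar B$ to be the Artinian quotient of $B$ by a linear regular sequence, with reduction-independent Hilbert series; this is what makes ${\rm r}_{\mathfrak{q}}(\mathfrak{m})$ independent of $\mathfrak{q}$ and equal to ${\rm reg}\,\mathscr{F}(I)$. Without it the reduction number could vary with the chosen minimal reduction and the clean equality would break down.
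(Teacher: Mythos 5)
The paper contains no internal proof of this lemma: it is quoted directly from \cite[Proposition 1.2]{GST}, so the only comparison available is with the standard argument that the cited source itself rests on. Your proposal is a correct and complete proof in the setting where the lemma is actually used (an ideal generated by forms of a single degree, so that $\mathscr{F}(I)\cong k[f_1,\ldots,f_\nu]$ is standard graded with $[\mathscr{F}(I)]_1=I/R_+I$), and it follows exactly that standard route: the Northcott--Rees dictionary matching minimal reductions of $I$ with ideals of $B=\mathscr{F}(I)$ generated by linear systems of parameters, with equality of reduction numbers via tensoring with $k$ in one direction and graded Nakayama in the other; the Cohen--Macaulay hypothesis turning each linear system of parameters into a regular sequence, so that the Artinian reduction $\bar B$ has the reduction-independent $h$-polynomial of $B$ as Hilbert series; and the identification ${\rm r}_{\mathfrak q}(\mathfrak m)=\max\{j:[\bar B]_j\neq 0\}={\rm reg}\,\bar B={\rm reg}\,B$, using that regularity is unchanged by factoring out a linear nonzerodivisor. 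Two small points are worth making explicit. First, the paper's ${\rm r}(I)$ is a minimum over \emph{all} minimal reductions, and your graded-Nakayama step is only legitimate for the homogeneous ones (those generated by $k$-linear combinations of the $f_i$); this causes no gap, because for an arbitrary minimal reduction $K$ the ``tensor with $k$'' direction alone already yields ${\rm r}_K(I)\geq {\rm r}_{\mathfrak q}(\mathfrak m)$, and the right-hand side is the reduction-independent common value, which homogeneous minimal reductions (existing since $k$ is infinite) attain. Second, you compute regularity via local cohomology of $B$, whereas the paper reads it off a minimal free resolution over $k[t_1,\ldots,t_\nu]$; these notions agree, so the chain of equalities is valid as stated.
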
 

\section{First family: linear free divisors in ${\mathbb P}^{n-1}$}\label{linfreediv} 

Before presenting our first family of free divisors as well as properties of related blowup algebras, let us record a couple of basic calculations which will be used without further mention in the proof of Theorem \ref{free_div_normal_curve} below.

\begin{Remark}\label{initials}\rm Let $S=k[w,u]$ be a standard graded polynomial ring in 2 variables $w, u$, and consider the ideal $\mathfrak{n}=(w,u)$. Given an integer $r\geq 2$, the following facts are well-known and easy to see.

\medskip

(a) The ideal $\mathfrak{n}^r=(w^r,w^{r-1}u,\ldots,wu^{r-1},u^r)$ is a perfect ideal of codimension 2, having the following $(r+1)\times r$ syzygy matrix:
	\begin{equation}\label{syz-mat}\phi_r=\left[\begin{array}{cccccccc}
	-w&0&\ldots&0\\
	u&-w&\ldots&0\\
	0&u&\ldots&0\\
	\vdots&\vdots&\ddots&\vdots\\
	0&0&\ldots&-w\\
	0&0&\ldots&u
	\end{array}\right];
	\end{equation}

\medskip	
	
(b) The presentation ideal of the Rees algebra $\mathscr{R}(\mathfrak{n}^r),$ that is, the kernel of the surjective map of $S$-algebras
	$$S[y_1,\ldots,y_{r+1}]\surjects \mathscr{R}(\mathfrak{n}^r),\quad y_i\mapsto w^{r-i+1}u^{i-1},$$
	is equal to $Q=(I_1(\underline{y}\cdot\phi_r), I_2(B))$, where $\underline{y}=\left[\begin{array}{ccccc}y_1&\cdots&y_{r+1}\end{array}\right]$  and $B=\left[\begin{array}{ccccc}y_1&\cdots&y_{r}\\y_2&\cdots&y_{r+1}\end{array}\right].$

\end{Remark}

\begin{Theorem}\label{free_div_normal_curve}
	Consider the standard graded polynomial ring $R=k[x_1,\ldots,x_n]$, where $n\geq4$. Denote $x_{n-1}=w$ and $x_n=u.$ Let
	$$f=2w^{n-1}u+\sum_{i=1}^{n-2}x_{i}w^{i-1}u^{n-i}.$$
	Then $f$ is a linear free divisor.
\end{Theorem}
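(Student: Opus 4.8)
The plan is to invoke Saito's criterion (Lemma \ref{Saito-crit}): I will produce $n-1$ logarithmic derivations with \emph{linear} coefficients, adjoin the Euler derivation $\varepsilon_n$, and show that the determinant of the resulting Saito matrix is a nonzero scalar multiple of $f$. Linearity of the $n-1$ derivations then forces $J_f$ to be linearly presented, upgrading ``free divisor'' to ``linear free divisor.''

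First I would record the partials: writing $g_j=\partial f/\partial x_j=w^{j-1}u^{n-j}$ for $1\le j\le n-2$, one gets $f_w=2(n-1)w^{n-2}u+\sum_{i=2}^{n-2}(i-1)x_iw^{i-2}u^{n-i}$ and $f_u=2w^{n-1}+\sum_{i=1}^{n-2}(n-i)x_iw^{i-1}u^{n-i-1}$, so that $J_f=(g_1,\ldots,g_{n-2},f_w,f_u)$. The $n-3$ monomial relations $wg_j-ug_{j+1}=0$ ($1\le j\le n-3$) give derivations $\theta_j=w\,\partial_{x_j}-u\,\partial_{x_{j+1}}$. A direct check produces two further relations with linear coefficients,
$$2(n-1)wg_{n-2}-uf_w+\sum_{j=1}^{n-3}j\,x_{j+1}g_j=0,$$
$$wf_w-(n-1)uf_u+(n-1)^2x_1g_1-\sum_{i=2}^{n-2}\big[(i-1)-(n-1)(n-i)\big]x_ig_i=0,$$
yielding $\theta_{n-2}$ and $\theta_{n-1}$; together with $\varepsilon_n$ these assemble into the $n\times n$ Saito matrix $M=[\theta_j(x_i)]$.

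The heart of the argument is the evaluation $\det M=n(n-1)f$, and this is the step I expect to be the main obstacle. The first $n-3$ columns of $M$ are supported only on the rows indexed by $\partial_{x_1},\ldots,\partial_{x_{n-2}}$ and form a lower-bidiagonal block with $w$ on the diagonal and $-u$ below it; its maximal minors are exactly the monomials appearing in Remark \ref{initials}, which gives the clean identity
$$\det\big[\theta_1\,|\,\cdots\,|\,\theta_{n-3}\,|\,v\big]=\sum_{i=1}^{n-2}v_i\,w^{i-1}u^{n-2-i}$$
for any column $v=(v_1,\ldots,v_{n-2})$ placed in the last slot. I would then Laplace-expand $\det M$ along the two rows indexed by $\partial_w$ and $\partial_u$ (which are supported only in the last three columns), reducing everything to three $2\times 2$ minors multiplied by the complementary determinants $\det[\theta_1|\cdots|\theta_{n-3}|\,\varepsilon_n]$, $\det[\theta_1|\cdots|\theta_{n-3}|\,\theta_{n-1}]$, and $\det[\theta_1|\cdots|\theta_{n-3}|\,\theta_{n-2}]$, each computed by the displayed formula. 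Collecting the coefficient of every monomial of $f$ yields $n(n-1)f$; the value specializes to $12f$ when $n=4$, a convenient hand check. The friction here is purely the sign bookkeeping in the Laplace expansion and the per-monomial coefficient collection, which the bidiagonal identity keeps manageable.

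Finally, since $\mathrm{char}\,k=0$ the scalar $\lambda=n(n-1)$ is nonzero, so Lemma \ref{Saito-crit} shows $f$ is a free divisor and that $\{\theta_1,\ldots,\theta_{n-1},\varepsilon_n\}$ is a free basis of $T_{R/k}(f)$. Because $T_{R/k}(f)=\mathrm{Syz}(J_f)\oplus R\varepsilon_n$ and $\theta_1,\ldots,\theta_{n-1}$ lie in $\mathrm{Syz}(J_f)$, these $n-1$ derivations form a basis of the syzygy module; as their coefficients are linear forms, $J_f$ is linearly presented, and hence $f$ is a linear free divisor.
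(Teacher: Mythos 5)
Your proposal is correct, but it certifies freeness by a genuinely different route than the paper. Both arguments begin with the same data: the $n-3$ monomial syzygies coming from $(w,u)^{n-3}$ and two linear syzygies — your two displayed relations are, after the reindexing $j=i-1$ and the identity $(n-1)(n-i)-(i-1)=n(n-i-1)+1$, exactly the paper's columns $\delta_1$ and $\delta_2$. The paper then assembles these into the $n\times(n-1)$ matrix $\psi$ and proves exactness of $0\to R(-n)^{n-1}\stackrel{\psi}\to R(-n+1)^n\to J_f\to 0$ by the Hilbert--Burch route: it suffices to check $\hht I_{n-1}(\psi)\geq 2$, which is done with just \emph{two} maximal minors, namely $u^{n-1}$ and (after specializing $x_1,\ldots,x_{n-2}\mapsto 0$) a minor of the form $cw^{n-1}+G$ with $G\in(x_1,\ldots,x_{n-2})$; freeness then follows from Lemma \ref{commut}. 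You instead adjoin $\varepsilon_n$ and compute the full Saito determinant, invoking Lemma \ref{Saito-crit}. I verified your key identity: writing $D_1=\sum_i x_iw^{i-1}u^{n-2-i}$, $D_2=\sum_i\beta_ix_iw^{i-1}u^{n-2-i}$ with $\beta_i=n(n-i-1)+1$, and $D_3=\sum_j jx_{j+1}w^{j-1}u^{n-2-j}+2(n-1)w^{n-2}$, the Laplace expansion along the $\partial_w,\partial_u$ rows gives $\det M=(n-1)u^2D_1+u^2D_2+nwuD_3$, and the coefficient of $x_iw^{i-1}u^{n-i}$ is $(n-1)+\beta_i+n(i-1)=n(n-1)$ for every $i$, while $w^{n-1}u$ receives $2n(n-1)$; hence $\det M=n(n-1)f$, nonzero as ${\rm char}\,k=0$, and your $n=4$ sanity check ($12f$) is right. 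Your linearity upgrade is also sound: the decomposition $T_{R/k}(f)=\mathrm{Syz}(J_f)\oplus R\varepsilon_n$ forces $\theta_1,\ldots,\theta_{n-1}$ to be a free basis of $\mathrm{Syz}(J_f)$, and since all their coefficients are linear forms the resulting graded presentation is minimal, so $J_f$ is linearly presented. As for what each approach buys: the paper's height check avoids any full determinant computation and directly produces the minimal resolution \eqref{res_gradient_ex_scroll}, whose matrix $\psi$ is reused verbatim in Proposition \ref{spread4} and Theorem \ref{spread4-CM}; your route costs the determinant bookkeeping but delivers in one stroke the explicit scalar $n(n-1)$, a free basis of $T_{R/k}(f)$, and the linear presentation — and it is in fact the same strategy the authors themselves deploy for the second family in Theorem \ref{second-fam}.
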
 
\demo  First notice that 
\begin{equation}\label{partials-of-f}
f_{x_i}=w^{i-1}u^{n-i}\quad \mbox{for each}\quad 1\leq i\leq n-2, 
\end{equation}
$$f_w=2(n-1)w^{n-2}u+\sum_{i=2}^{n-2}(i-1)x_{i}w^{i-2}u^{n-i}\quad \mbox{and} \quad f_u=2w^{n-1}+\sum_{i=1}^{n-2}(n-i)x_{i}w^{i-1}u^{n-(i+1)}.$$
In particular, the subideal $(f_{x_1},\ldots,f_{x_{n-2}})$ of $J_f$ is equal to the ideal $u^2(w,u)^{n-3}.$ Thus, if $\phi_{n-3}$ is the $(n-2)\times(n-3)$ syzygy matrix of the ideal $(w,u)^{n-3}$ (see (\ref{syz-mat})), then the columns of the $n\times(n-3)$ matrix 
$$\eta=\left[\begin{array}{c}\phi_{n-3}\\\hline\boldsymbol0\end{array}\right]$$ are syzygies of the gradient ideal $J_f$.
We also have the following equalities
\begin{equation}\label{syz_delta2}
uf_w=2(n-1)w^{n-2}u^2+\sum_{i=2}^{n-2}(i-1)x_{i}w^{i-2}u^{n-(i-1)}=
2(n-1)wf_{x_{n-2}}+\sum_{i=2}^{n-2}(i-1)x_{i}f_{x_{i-1}}
\end{equation}
and
\begin{equation}\label{syz_delta_1}
(n-1)uf_u=2(n-1)w^{n-1}u+\sum_{i=1}^{n-2}(n-1)(n-i)x_{i}w^{i-1}u^{n-i}=wf_w+\sum_{i=1}^{n-1}(n(n-i-1)+1)x_{i}f_{x_i}.
\end{equation}
Now note that \eqref{syz_delta2} and \eqref{syz_delta_1} yield two new (linear) syzygies of $J_f$, to wit, {\small$$\delta_1=\left[\begin{array}{ccccccccccccccccc}
	\alpha_2x_2&\alpha_3x_3&\cdots&\alpha_{n-2}x_{n-2}&2(n-1)w&-u&0
	\end{array}\right]^t$$}
and
{\small$$\delta_2=\left[\begin{array}{ccccccccccccccccc}\beta_1x_1&\beta_2x_2&\cdots &\beta_{n-2}x_{n-2}&w&-(n-1)u\end{array}\right]^t$$}
where $\alpha_i=i-1$ if $2\leq i\leq n-2$, and $\beta_i=n(n-i-1)+1$ whenever $1\leq i\leq n-2$. 

\medskip

\noindent{\bf Claim 1.} {\it The minimal graded free resolution of $J_f$ is
	\begin{equation}\label{res_gradient_ex_scroll}
	0\to R(-n)^{n-1}\stackrel{\psi}\lar R(-n+1)^n\to J_f\to 0
	\end{equation}
	where $\psi=\left[\begin{array}{c|c|cc}\eta&\delta_1&\delta_2\end{array}\right].$} 

From the discussion above, we already know that the sequence \eqref{res_gradient_ex_scroll} is a complex. To prove that it is in fact a minimal graded free resolution of $J_f$, it suffices to verify that $\hht I_{n-1}(\psi)\geq 2.$ Note we can write $\psi$ in the form
$$\psi=\left[\begin{array}{c|c}\phi_{n-3}&\boldsymbol{\ast}\\\hline\boldsymbol0&\Phi\end{array}\right]$$
where $\Phi=\left[\begin{array}{cc}-u&w\\0&-(n-1)u\end{array}\right].$ Thus, $\det\Phi\cdot I_{n-3}(\phi_{n-3})=u^2\cdot(w,u)^{n-3}\subset I_{n-1}(\psi).$ In particular, $u^{n-1}\in I_{n-1}(\psi)$. On the other hand, if we specialize the entries of $\psi$ via the $k$-algebra endomorphism of $R$ that fixes the variables $w, u$ and maps the remaining ones to $0$, we obtain the matrix
$$\overline{\psi}=\left[\begin{array}{ccccccccccccc}
-w&0&\ldots&0&0&0\\
u&-w&\ldots&0&0&0\\
0&u&\ldots&0&0&0\\
\vdots&\vdots&\ddots&\vdots&\vdots&\vdots\\
0&0&\ldots&-w&0&0\\
0&0&\ldots&u&2(n-1)w&0\\
0&0&\ldots&0&-u&w\\
0&0&\ldots&0&0&-(n-1)u
\end{array}\right].$$
The $(n-1)$-minor of $\overline{\psi}$ obtained by omitting the last row is $cw^{n-1}$ for a certain non-zero $c\in k.$ Therefore, the $(n-1)$-minor of $\psi$ obtained by omitting the last row has the shape $cw^{n-1}+G,$  for a suitable $G\in (x_1,\ldots,x_{n-2}).$
Hence, $(u^{n-1},cw^{n-1}+G)\subset I_{n-1}(\psi).$ Hence, $\hht I_{n-1}(\psi)\geq 2$ as desired.
\qed

\medskip

A computation shows that, in the first case covered by the theorem (i.e., $n=4$), the Jacobian ideal $J_f$ is of linear type; in particular, $\ell(J_f)=4$ and ${\rm r}(J_f)=0$. The case $n\geq 5$ is treated in Proposition \ref{spread4} below. As ingredients we consider the polynomial rings
$$T:=k[y_1,\ldots,y_{n-2},s,t]\quad \mbox{and} \quad T':=k[y_1,\ldots,y_{n-2}]$$ as well as the $k$-algebras $$A:=k[f_{x_1},\ldots,f_{x_{n-2}},f_w,f_u] \quad \mbox{and} \quad A':=k[f_{x_1},\ldots,f_{x_{n-2}}].$$ By factoring out $u^2$ from the polynomials $f_{x_1},\ldots,f_{x_{n-2}}$
(see (\ref{partials-of-f})), we see that $$ A' \, \cong \, A'' \, := \, k[w^{n-3},\,w^{n-4}u,\, \ldots,\,u^{n-3}].$$ All these rings are related via the following commutative diagram of $k$-algebras:
\begin{equation}\label{commutative_diagram_normal_curve}
\xymatrix{T\ar@{->>}[r]&A\\
	T'\ar@{^{(}->}[u]\ar@{->>}[r]&A'\ar[r]^-{\cong}\ar@{^{(}->}[u]& A''}
\end{equation}

\begin{Proposition}\label{spread4} Maintain the above notations, and let $f\in R$ be as in Theorem {\rm\ref{free_div_normal_curve}}, with $n\geq 5$. The following assertions hold: 
	\begin{enumerate}
		\item[(i)] $\mathscr{F}(J_f)\cong T/I_2\left[\begin{array}{ccccc}y_1&\cdots&y_{n-3}\\y_2&\cdots&y_{n-2}\end{array}\right]$ as $k$-algebras. In particular, $\mathscr{F}(J_f)$ is Cohen-Macaulay;
		\item[(ii)] $\ell(J_f)=4$;
		\item[(iii)] ${\rm r}(J_f)=1$;
		\item[(iv)] ${\rm reg}\,{\rm Der}_k(R/(f))=n-2$ {\rm (}also for $n=4${\rm )}.  
	\end{enumerate}
\end{Proposition}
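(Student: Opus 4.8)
The plan is to identify the special fiber $\mathscr{F}(J_f)$ with the quotient $T/I_2(B)$ of part (i), where $B$ is the $2\times(n-3)$ Hankel matrix displayed in the statement, and then to deduce (ii) and (iii) as consequences, with (iv) being immediate. Since the partials $f_{x_1},\dots,f_{x_{n-2}},f_w,f_u$ are homogeneous of the same degree $n-1$, we have $\mathscr{F}(J_f)\cong A=k[f_{x_1},\dots,f_{x_{n-2}},f_w,f_u]$ as a $k$-subalgebra of $R$, and the top surjection of the diagram \eqref{commutative_diagram_normal_curve} sends $y_i\mapsto f_{x_i}$, $s\mapsto f_w$, $t\mapsto f_u$. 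First I would record that $I_2(B)$ lies in the kernel: using \eqref{partials-of-f}, the $2\times 2$ minors $f_{x_i}f_{x_{j+1}}-f_{x_{i+1}}f_{x_j}$ vanish identically because $f_{x_i}=w^{i-1}u^{n-i}$. Thus the top map factors through a surjection $\bar\pi\colon T/I_2(B)\twoheadrightarrow A$. The bottom row of \eqref{commutative_diagram_normal_curve} is classical: $A'\cong A''$ is the $(n-3)$-rd Veronese subalgebra of $k[w,u]$, i.e. the homogeneous coordinate ring of the rational normal curve of degree $n-3$, whose defining prime is exactly $I_2(B)$. Hence $T'/I_2(B)\cong A'$, and $T/I_2(B)=(T'/I_2(B))[s,t]$ is a $4$-dimensional domain which is moreover Cohen--Macaulay: the maximal minors of the $2\times(n-3)$ matrix $B$ cut out an ideal of the expected codimension $n-4$, so $T'/I_2(B)$ is Cohen--Macaulay by the Eagon--Northcott/Hochster--Eagon theory, and a polynomial extension preserves this.

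The crux is to prove $\dim A=\ell(J_f)=4$, for then $\bar\pi$ is a surjection between affine $k$-domains of equal dimension and hence an isomorphism (its kernel is a prime of height $0$, so it vanishes), giving (i); and (ii) follows at once. For the lower bound I would invoke Lemma \ref{rank} and exhibit a nonvanishing $4\times 4$ minor of the Hessian $\Theta=(\partial f_{x_i}/\partial x_j)$: taking the rows indexed by $f_{x_1},f_{x_2},f_w,f_u$ and the columns $\partial/\partial x_1,\partial/\partial x_2,\partial/\partial w,\partial/\partial u$, a cofactor expansion along the first two of these columns (where the rows $f_{x_1},f_{x_2}$ vanish, since these partials depend only on $w,u$) reduces to a product of $2\times 2$ blocks and yields a nonzero value (up to sign, $(n-1)^2u^{4n-8}$). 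For the upper bound I would use that each $f_{x_i}$ lies in $k[w,u]$, so $A'$ has transcendence degree $2$ (being $\cong A''$), whence ${\rm tr.deg}_k A\le {\rm tr.deg}_k A'+2=4$ because $A=A'[f_w,f_u]$ is generated over $A'$ by two elements. Combining the two bounds gives $\ell(J_f)=\dim A={\rm tr.deg}_k A=4$.

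Finally, (iii) follows from Lemma \ref{r-reg}: as $\mathscr{F}(J_f)$ is Cohen--Macaulay, ${\rm r}(J_f)={\rm reg}\,\mathscr{F}(J_f)={\rm reg}\,T/I_2(B)={\rm reg}\,T'/I_2(B)=1$, the last equality because the rational normal curve has a linear (Eagon--Northcott) resolution, so its coordinate ring has regularity $1$, and adjoining the polynomial variables $s,t$ leaves the regularity unchanged. Assertion (iv) is immediate from Lemma \ref{regder}, since $f$ is a linear free divisor by Theorem \ref{free_div_normal_curve}, and this argument is insensitive to whether $n=4$ or $n\ge 5$. The main obstacle is the exact identification of the kernel of the presentation, namely proving that $f_w$ and $f_u$ introduce no relations beyond those among the $f_{x_i}$; the transcendence-degree bound together with the explicit Hessian minor is precisely what pins down $\ell(J_f)=4$ and thereby forces $\bar\pi$ to be an isomorphism.
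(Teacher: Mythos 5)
Your proposal is correct and follows essentially the same route as the paper: identify $\mathscr{F}(J_f)$ with $A$, use the Hankel/rational-normal-curve structure of $A'\cong A''$ for the determinantal relations, Cohen--Macaulayness and regularity $1$, and pin down $\ell(J_f)=4$ via Lemma \ref{rank} applied to the block structure of the Hessian, finishing with Lemmas \ref{r-reg} and \ref{regder}. The only cosmetic differences are that you exhibit one explicit nonvanishing $4\times 4$ minor, namely $\pm(n-1)^2u^{4n-8}$ (the paper instead notes $I_2(\Theta_{w,u})^2\subset I_4(\Theta)$ with $I_2(\Theta_{w,u})\neq 0$), and you conclude via a surjection of equal-dimensional affine domains rather than the paper's comparison of heights of the primes $Q'T\subset Q$; these arguments are equivalent.
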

\demo (i) Since $J_f$ is a homogeneous ideal generated in the same degree, there is an isomorphism of graded $k$-algebras $\mathscr{F}(J_f)\cong A$, so that $\mathscr{F}(J_f)\cong T/Q$ where $Q:=\ker(T\surjects A).$ By the diagram \eqref{commutative_diagram_normal_curve}, we get $Q'T\subset Q,$ where $Q':=\ker(T'\surjects A')$. From Remark \ref{initials}(b) we have
$$Q'T=I_2\left[\begin{array}{ccccc}y_1&\cdots&y_{n-3}\\y_2&\cdots&y_{n-2}\end{array}\right].$$ Hence, $\hht Q\geq \hht Q'T=n-4.$ Thus, in order to prove that $ Q = I_2\left[\begin{array}{ccccc}y_1&\cdots&y_{n-3}\\y_2&\cdots&y_{n-2}\end{array}\right]$, we must show $ \hht Q \leq n-4, $ or equivalently, $\dim A\geq 4$. Now, on the other hand, Lemma \ref{rank} gives $\dim A=\rk\Theta,$ where $\Theta$ is the Hessian matrix of $f.$ Notice that the (Jacobian) matrix $\Theta$ can be written in blocks as 
$$\Theta=\left[\begin{array}{c|c}
\boldsymbol0&{\Theta}_{w, u}\\
\hline
{\Theta}_{w, u}^t&\ast
\end{array}
\right]$$
where ${\Theta}_{w, u}$ is the $(n-2)\times 2$ Jacobian matrix of $f_{x_1},\ldots,f_{x_{n-2}}$ with respect to $w, u$. Clearly, $I_2({\Theta}_{w, u})\neq 0.$ In particular, $I_4(\Theta)\neq 0$ because $I_2({\Theta}_{w, u})^2\subset I_4(\Theta).$ Hence ${\rm rank}\,\Theta\geq 4$, so that 
$\dim A={\rm rank}\,\Theta \geq 4$, as needed. The Cohen-Macaulayness of $\mathscr{F}(J_f)$ will be confirmed below, in the proof of item (iii).

\medskip

\noindent (ii) Since $\ell(J_f)=\dim \mathscr{F}(J_f)$,  the statement follows directly from the proof of  (i).

\medskip

\noindent (iii) It is well-known that $I_2\left[\begin{array}{ccccc}y_1&\cdots&y_{n-3}\\y_2&\cdots&y_{n-2}\end{array}\right]$ is a perfect ideal with linear resolution (in fact, this ideal is resolved by the Eagon-Northcott complex). In particular, the ring $\mathscr{F}(J_f)\cong T/Q$  is Cohen-Macaulay and its Castelnuovo-Mumford regularity is $1.$ Thus, by Lemma \ref{r-reg}, ${\rm r}(J_f)={\rm reg\,}\mathscr{F}(J_f)=1.$

\medskip

\noindent (iv) By Theorem \ref{free_div_normal_curve}, $f$ is a linear free divisor. Now the assertion follows from Lemma \ref{regder}.\qed

\medskip

Our next goal is to prove that, for $f$ as above, $\mathscr{R}(J_f)$ is Cohen-Macaulay. First, we need an auxiliary lemma.

\begin{Lemma}\label{Lemma_scroll_mod_quadric} Let $k[\zz,\vv]=k[z_1,\ldots,z_{m},v_1,\ldots,v_{m}]$ be a polynomial ring, with $m\geq 2$. Consider $$F=a_1v_1z_1+\cdots+a_{m}v_{m}z_{m},$$ where  $a_1,\ldots,a_n$ are non-zero elements of $k$. Let  
$$M=\left[\begin{array}{ccccccc}z_1&z_2&\ldots&z_{m-1}\\z_2&z_3&\ldots&z_m\end{array}\right].$$ Then, $ k[\zz,\vv]/(I_2(M), F)$ is a Cohen-Macaulay  domain of codimension $m-1.$	
\end{Lemma}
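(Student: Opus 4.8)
The plan is to realize the ring in question as a hypersurface section of a Cohen--Macaulay domain and to treat the three assertions (Cohen--Macaulayness, codimension, integrality) in that order. First I would record the structure of $A:=k[\zz,\vv]/I_2(M)$. Since the entries of $M$ involve only the $z_i$, one has $A\cong (k[\zz]/I_2(M))[\vv]$. Now $k[\zz]/I_2(M)$ is the homogeneous coordinate ring of the rational normal curve of degree $m-1$; equivalently, under $z_i\mapsto s^{m-i}t^{i-1}$ it is isomorphic to the $(m-1)$st Veronese subalgebra $k[s^{m-1},s^{m-2}t,\ldots,t^{m-1}]\subset k[s,t]$, which is a Cohen--Macaulay normal domain of dimension $2$. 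In particular $I_2(M)$ is prime of height $m-2$, so $A$ is a Cohen--Macaulay domain with $\dim A=m+2$.

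Next, the element $F$ is homogeneous of degree $2$ (all variables having degree $1$) and is nonzero in $A$: its $v_1$-linear part is $a_1z_1v_1$, with $\bar z_1\neq 0$ and $a_1\in k\setminus\{0\}$. As $A$ is a domain, $F$ is a nonzerodivisor, whence $A/FA=k[\zz,\vv]/(I_2(M),F)$ is Cohen--Macaulay of dimension $m+1$, that is, of codimension $m-1$. This already settles the Cohen--Macaulayness and the codimension, and it remains only to prove that $A/FA$ is a domain.

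For integrality I would pass to the localization at $z_1$ and then descend. The minor relations $z_1z_i=z_2z_{i-1}$ let one write $z_i=z_1(z_2/z_1)^{i-1}$ in $A[z_1^{-1}]$, so that $A[z_1^{-1}]=k[z_1^{\pm1},w][\vv]$ with $w:=z_2/z_1$, a localized polynomial ring and hence a domain. There $F=z_1G$ with $G=\sum_i a_iv_iw^{i-1}$; since $z_1$ is a unit, $(F)=(G)$, and $G$ is affine-linear in the $v_i$ with the unit coefficient $a_1$ in front of $v_1$. Solving for $v_1$ gives $A[z_1^{-1}]/(F)\cong k[z_1^{\pm1},w,v_2,\ldots,v_m]$, a Laurent polynomial ring, hence a domain. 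To globalize, it suffices that $z_1$ be a nonzerodivisor on $A/FA$, for then $A/FA$ embeds in its $z_1$-localization, a domain. Because $A/FA$ is Cohen--Macaulay, hence unmixed, $z_1$ is a nonzerodivisor exactly when $\dim A/(F,z_1)<\dim A/FA=m+1$. A dimension count (which may be performed after base change to $\overline{k}$) finishes this: on $V(I_2(M),z_1)$ the parametrization forces $s=0$, so $z_1=\cdots=z_{m-1}=0$ and only $z_m$ survives, giving dimension $m+1$ in $\mathbb{A}^{2m}$; on that locus $F$ reduces to $a_mv_mz_m$, and cutting by it drops the dimension to $m$.

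I expect the genuinely delicate point to be the integrality statement, and within it the verification that $z_1$ is a nonzerodivisor on $A/FA$, which is what allows the clean description of the $z_1$-localization to be transported back to $A/FA$ itself. By contrast, once $F$ is seen to be a regular element on the determinantal domain $A$, the Cohen--Macaulayness and the codimension are formal consequences.
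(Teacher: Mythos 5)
Your proposal is correct and follows essentially the same route as the paper's proof: realize the ring as a hypersurface section of the Cohen--Macaulay Hankel determinantal domain (giving Cohen--Macaulayness and codimension at once), then prove integrality by showing $z_1$ is a nonzerodivisor on the quotient via unmixedness plus a height/dimension bound, and finally identify the $z_1$-localization with a domain. The only differences are cosmetic: the paper bounds the height by iteratively deleting columns of $M$ inside a putative associated prime, where you use the rational normal curve parametrization, and the paper's localization eliminates $v_1$ directly rather than first trivializing $I_2(M)$ via $w=z_2/z_1$.
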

\demo  By \cite[Section 4]{Eisenbud}, we have that $k[\zz,\vv]/I_2(M)$ is a Cohen-Macaulay  domain of codimension $m-2$. In particular, since $F\notin I_2(M)$, the ring $k[\zz,\vv]/(I_2(M), F)$ is  Cohen-Macaulay of codimension $m-1$, and so it remains to show that it is a domain. Clearly, we can assume $m\geq 3$.

\medskip
\noindent{\bf Claim.} {\it $z_1$ is  a $(k[\zz,\vv]/(I_2(M),F))$-regular element.}
\medskip

Suppose that $z_1$ is not $(k[\zz,\vv]/(I_2(M), F))$-regular. Then, $z_1\in \mathfrak{p}$ for some associated prime $\mathfrak{p}$ of $k[\zz,\vv]/(I_2(M),F)$, and hence in particular $(z_1, I_2(M),F)\subset \mathfrak{p}$. Now let $M_1$ be the matrix obtained from $M$ by deletion of its first column. Then it is easy to see that $(z_1, z_2, I_2(M_1), F)\subset \mathfrak{p}$. If $M_2$ is the matrix obtained by deletion of the first column of $M_1$, then $(z_1, z_2, z_3, I_2(M_2), F)\subset \mathfrak{p}$. Proceeding in this way, we get $$(z_1, z_2, \ldots, z_{m-1}, F)\subset \mathfrak{p}.$$
Since $\hht (z_1,z_2,\ldots,z_{m-1},F)=m,$ it follows that $\hht \mathfrak{p}\geq m.$ But, this is a contradiction because  $\mathfrak{p}$ is a associated prime of the Cohen-Macaulay (hence unmixed) ring $k[\zz,\vv]/(I_2(M),F)$, which has codimension $m-1$. This proves the Claim.

Finally, localizing in $z_1$ we deduce the isomorphism $$\frac{k[\zz,\vv][z_1^{-1}]}{(I_2(M),F)k[\zz,\vv][z_1^{-1}]}\cong \frac{k[\zz,v_2,\ldots,v_{m}][z_1^{-1}]}{I_2(M)k[\zz,v_2,\ldots,v_{m}][z_1^{-1}]}.$$ But the ring on the right side of the isomorphism is a domain. By the claim,  it follows that $k[\zz,\vv]/(I_2(M), F)$  is a domain.
\qed

\begin{Theorem}\label{spread4-CM}
	Let $f\in R$ be as in Theorem {\rm\ref{free_div_normal_curve}}. Then, $\mathscr{R}(J_f)$ is Cohen-Macaulay.
\end{Theorem}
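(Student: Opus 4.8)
The plan is to write down the defining ideal of $\mathscr{R}(J_f)$ explicitly, recognize its ``scroll part'' as a base ring that is Cohen--Macaulay by (the determinantal content of) Lemma \ref{Lemma_scroll_mod_quadric}, and then obtain $\mathscr{R}(J_f)$ from that base by killing a length-two regular sequence coming from the two non-scroll syzygies $\delta_1,\delta_2$.

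First I would read the linear syzygies off the Hilbert--Burch matrix $\psi$ of \eqref{res_gradient_ex_scroll}. Writing $\underline y=[y_1,\dots,y_{n-2},s,t]$ for the presentation variables of $\mathscr{R}(J_f)$ over $R$ (so that $s,t$ correspond to $f_w,f_u$), the block $\eta$ contributes the $n-3$ forms $uy_{i+1}-wy_i$, i.e. exactly $I_1(\underline y\cdot\phi_{n-3})$ in the sense of Remark \ref{initials}(b), while $\delta_1,\delta_2$ (coming from \eqref{syz_delta2},\eqref{syz_delta_1}) contribute the two bilinear forms
\[
\ell_1:=us-2(n-1)wy_{n-2}-\textstyle\sum_{i=1}^{n-3}i\,x_{i+1}y_i,\qquad
\ell_2:=(n-1)ut-ws-\textstyle\sum_{i=1}^{n-2}\beta_i x_i y_i .
\]
By Proposition \ref{spread4}(i) the fiber relations are the $2\times 2$ minors $I_2(B)$. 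The crucial preliminary step is to prove that $J_f$ is \emph{of fiber type}, so that the presentation ideal of $\mathscr{R}(J_f)$ equals $\big(I_1(\underline y\cdot\phi_{n-3}),\,\ell_1,\ell_2,\,I_2(B)\big)$.

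Granting fiber type, set
\[
C:=k[w,u,x_1,\dots,x_{n-2},y_1,\dots,y_{n-2}]\big/\big(I_1(\underline y\cdot\phi_{n-3}),I_2(B)\big).
\]
Since the $x_i$ do not occur in these relations, Remark \ref{initials}(b) gives $C\cong\mathscr{R}((w,u)^{n-3})\otimes_k k[x_1,\dots,x_{n-2}]$; the first factor is a rational normal scroll, hence arithmetically Cohen--Macaulay (this is precisely the determinantal input $k[\mathbf z,\mathbf v]/I_2(M)$ on which the proof of Lemma \ref{Lemma_scroll_mod_quadric} rests), so $C$, and therefore $C[s,t]$, is Cohen--Macaulay of dimension $n+3$. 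Fiber type says exactly that $\mathscr{R}(J_f)=C[s,t]/(\ell_1,\ell_2)$. As $\dim\mathscr{R}(J_f)=n+1=\dim C[s,t]-2$, the homogeneous pair $\ell_1,\ell_2$ drops dimension by two on the Cohen--Macaulay ring $C[s,t]$; by the standard characterization of regular sequences in a Cohen--Macaulay ring it is therefore a regular sequence, and $\mathscr{R}(J_f)$, being a regular-sequence quotient of a Cohen--Macaulay ring, is Cohen--Macaulay.

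The main obstacle is the fiber-type statement. I would attack it by analyzing $\mathrm{Sym}_R(J_f)=R[\underline y]/(I_1(\underline y\cdot\psi))$ and showing that its $R$-torsion is generated by $I_2(B)$: modulo $I_1(\underline y\cdot\psi)$ each minor $y_iy_{j+1}-y_{i+1}y_j$ is annihilated by $u$ (using $uy_{i+1}=wy_i$), so $I_2(B)\subseteq\big(I_1(\underline y\cdot\psi):u^{\infty}\big)$, and it remains to verify that adjoining $I_2(B)$ already yields a ring of the correct dimension $n+1$ with no further torsion---equivalently, that $\ell_1,\ell_2$ is a regular sequence on $C[s,t]$, which is the dimension count above and thus closes the loop. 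If one prefers to invoke Lemma \ref{Lemma_scroll_mod_quadric} in its full scroll-plus-quadric form rather than only its determinantal ingredient, the same circle of ideas reduces $\mathscr{R}(J_f)$, by a length-two regular sequence, to a ring $k[\mathbf z,\mathbf v]/(I_2(M),F)$ with $I_2(M)=I_2(B)$ and $F=\sum_i\beta_i x_i y_i$ the bilinear form read off from $\delta_2$ in \eqref{syz_delta_1}; the delicate point, once more, is to confirm that this reduction is effected by a genuine regular sequence, i.e. that the cut drops dimension by exactly two.
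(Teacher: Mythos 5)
Your setup coincides with the paper's: the candidate ideal $\bigl(I_1(\underline y\cdot\phi_{n-3}),\,I_2(B),\,\ell_1,\ell_2\bigr)$ is exactly the ideal $\mathcal{K}=\mathcal{K}_0+(G,H)$ that the paper works with, and you correctly identify that everything reduces to the identification of $\mathcal{K}$ with the full Rees ideal $\mathcal{J}$ (your ``fiber type''). But your proposed proof of that identification is circular, and the gap is genuine. The dimension count that is supposed to make $\ell_1,\ell_2$ a regular sequence on $C[s,t]$ was carried out \emph{granting} fiber type; without it, the only available facts are Krull's height theorem and the surjection $C[s,t]/(\ell_1,\ell_2)\surjects\mathscr{R}(J_f)$, and both yield the inequality $\dim C[s,t]/(\ell_1,\ell_2)\geq n+1$ --- an upper bound on the height of $(\ell_1,\ell_2)$, which is the wrong direction for proving a regular sequence. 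Worse, even if you establish the regular-sequence property by an independent height computation, it does not ``close the loop'': it shows only that $\mathcal{K}$ is Cohen--Macaulay, hence unmixed, of height $n-1$, with $\mathcal{J}$ among its minimal primes. An inclusion $\mathcal{K}\subseteq\mathcal{J}$ of ideals of the same height forces equality only when $\mathcal{K}$ is prime (or at least $\mathcal{J}$-primary); a priori $\mathcal{K}$ could have further components of height $n-1$, or be non-reduced along $\mathcal{J}$, in which case $\mathcal{K}\subsetneq\mathcal{J}$ and your ring $C[s,t]/(\ell_1,\ell_2)$ is not $\mathscr{R}(J_f)$ at all.

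Supplying what is missing is precisely the content of the paper's proof. There, one first shows $C/\mathcal{K}_1=C/(\mathcal{K}_0,H)$ is a \emph{domain} (Serre's criterion plus a Jacobian-criterion height bound), which is what legitimizes cutting by $G$; then one shows $C/\mathcal{K}$ is a domain by proving that $u$ is a nonzerodivisor modulo $\mathcal{K}$ --- two height computations, one of which is where Lemma \ref{Lemma_scroll_mod_quadric} is actually used --- and by checking that $(C/\mathcal{K})[u^{-1}]$ is a polynomial ring; only then does ``$\mathcal{K}\subseteq\mathcal{J}$, both prime of the same height'' give $\mathcal{K}=\mathcal{J}$, with Cohen--Macaulayness obtained along the way. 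Your saturation idea could be completed in the same spirit: since $f_{x_1}=u^{n-1}$, inverting $u$ makes $J_f$ the unit ideal, so the torsion of ${\rm Sym}_RJ_f$ is $u$-power torsion and $\mathcal{J}=\mathcal{L}:u^{\infty}\subseteq\mathcal{K}:u^{\infty}$, where $\mathcal{L}=(I_1(\underline y\cdot\psi))$; fiber type then follows as soon as $u$ is a nonzerodivisor modulo $\mathcal{K}$. But that nonzerodivisor statement is an additional, unavoidable height/domain argument (essentially the paper's Claim 2); it is not a consequence of the regular-sequence property, and as written your proposal neither proves it nor proves the regular-sequence claim itself.
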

\demo Consider the natural epimorphism
$$C:=k[x_1,\ldots,x_{n-2},w,u,y_1,\ldots,y_{n-2},s,t]\surjects \mathscr{R}(J_f),$$ whose kernel we denote $\mathcal{J}$. From the previous considerations (and notations), it follows that
$$\mathcal{K}:=(I_1(\gamma\cdot \psi),Q)\subset \mathcal{J}$$
where $\gamma=\left[\begin{array}{cccccccc}y_1&\cdots&y_{n-2}&s&t\end{array}\right]$. Note we can rewrite the ideal $\mathcal{K}$ as 
$$\mathcal{K}= I_2\left[\begin{array}{ccccc}u&y_1&\cdots&y_{n-3}\\w&y_2&\cdots&y_{n-2}\end{array}\right]+(G,\,H),$$ $$G=\gamma\cdot\delta_1=-su+2(n-1)y_{n-2}w+\sum_{i=2}^{n-2}\alpha_iy_{i-1}x_i\quad \mbox{and}\quad H=\gamma\cdot\delta_2=-(n-1)tu+sw+\sum_{i=1}^{n-2}\beta_iy_ix_i.$$



\medskip
\noindent{\bf Claim 1.} {\it Let $\mathcal{K}_1:=I_2\left[\begin{array}{ccccc}u&y_1&\cdots&y_{n-3}\\w&y_2&\cdots&y_{n-2}\end{array}\right]+(H)\subset C$. Then, $C/\mathcal{K}_1$ is a Cohen-Macaulay domain.}

\medskip

Denote $\mathcal{K}_0:=I_2\left[\begin{array}{ccccc}u&y_1&\cdots&y_{n-3}\\w&y_2&\cdots&y_{n-2}\end{array}\right].$ It is well-known that $C/\mathcal{K}_0$ is a Cohen-Macaulay integral domain of dimension $n+3$. Moreover, since $H\notin\mathcal{K}_0$, this polynomial must be $C/\mathcal{K}_0$-regular. Hence, the ring $C/\mathcal{K}_1=C/(\mathcal{K}_0, H)$ is Cohen-Macaulay of dimension $n+2$. In particular, $C/\mathcal{K}_{1}$ satisfies Serre's condition ${\rm S}_2$ (see the definition in Subsection \ref{when-max}). 
We claim that, even more, the ring $C/\mathcal{K}_{1}$ is normal, from which its integrality will follow. In order to show that $C/\mathcal{K}_{1}$ is normal, it remains to verify that it is locally regular in codimension 1. Note $\hht \mathcal{K}_1=n-2$. By the classical Jacobian criterion, it suffices to prove that $$\hht (\mathcal{K}_1,\,I_{n-2}(\Theta)) \, \geq \, n,$$ where $\Theta$ denotes the Jacobian matrix of $\mathcal{K}_1$. Notice that the matrix $\Theta$,  after a reordering of its columns (which obviously does not affect ideals of minors), can be written in the format $$\Theta=\left[\begin{array}{c|cccccc}\Theta'&0\\\hline \ast &\Theta''\end{array}\right].$$
Precisely, $\Theta'$ is the Jacobian matrix of $\mathcal{K}_0$ with respect the variables $w,u,y_1,\ldots,y_{n-2}$ and $\Theta''$ is the (row) Jacobian matrix of $H$ with respect to the variables $x_1,\ldots, x_{n-2}, s, t$.  In particular, $$(\mathcal{K}_1,\,I_1(\Theta'')\cdot I_{n-3}(\Theta'))\subset (\mathcal{K}_1, I_{n-2}(\Theta)).$$ Now pick a minimal prime  $\mathfrak{q}$ of $(\mathcal{K}_1,I_{n-2}(\Theta)).$ In particular, $I_1(\Theta'')\cdot I_{n-3}(\Theta')\subset \mathfrak{q}$, which yields $I_1(\Theta'')\subset \mathfrak{q}$ or $I_{n-3}(\Theta')\subset \mathfrak{q}$. If $I_1(\Theta'')\subset \mathfrak{q}$ then $\hht \mathfrak{q}\geq n$ because $I_1(\Theta'')=(y_1,\ldots,y_{n-2}, w, u).$ On the other hand, if $I_{n-3}(\Theta')\subset \mathfrak{q}$ then $(\mathcal{K}_0,I_{n-3}(\Theta'))\subset \mathfrak{q}$. But it is well-known that $$\hht (\mathcal{K}_0,\,I_{n-3}(\Theta')) \, = \, n.$$ Therefore, $\hht \mathfrak{q}\geq n$ in any case, and we get $\hht (\mathcal{K}_1,I_{n-2}(\Theta))\geq n$, as desired.

\medskip

\noindent{\bf Claim 2.} {\it $C/\mathcal{K}$ is a Cohen-Macaulay domain of dimension $n+1.$}

\medskip

By Claim 1 and its proof, $C/\mathcal{K}_1$ is a Cohen-Macaulay domain of dimension $n+2$. Thus, since $G\notin \mathcal{K}_1$, the ring $C/\mathcal{K}=C/(\mathcal{K}_1, G)$ is Cohen-Macaulay of dimension $n+1$. It remains to prove that $C/\mathcal{K}$ is a domain. First we claim that $u$ is $C/\mathcal{K}$-regular. Suppose otherwise. Then $u\in \mathfrak{p}$ for some associated prime $\mathfrak{p}$ of $C/\mathcal{K}$, which gives 
$$(u,wy_1,\ldots,wy_{n-3},I_2(N), G, H)\subset \mathfrak{p},$$
where 
$$N:=\left[\begin{array}{ccccc}y_1&\cdots&y_{n-3}\\y_2&\cdots&y_{n-2}\end{array}\right].$$
In particular,
\begin{equation}\label{aprox_p}
\mathcal{Q}_1:=(u,w,I_2(N), G, H)\subset \mathfrak{p}\quad\mbox{or}\quad \mathcal{Q}_2:=(u,y_1,\ldots,y_{n-3}, G, H)\subset \mathfrak{p}.
\end{equation}
We have
$$C/(u,w,I_2(N), H)\cong (k[x_1,\ldots,x_{n-2},y_1,\ldots,y_{n-2}]/(I_2(N),\beta_1y_1x_1+\cdots+\beta_{n-2} y_{n-2}x_{n-2}))[s,t].$$
From this isomorphism and Lemma~\ref{Lemma_scroll_mod_quadric}, the ring $C/(u,w,I_2(N), H)$ is a Cohen-Macaulay domain of dimension $(n-1)+2=n+1$. Thus, since $G\notin(u,w,I_2(N), H)$, we obtain that $C/\mathcal{Q}_1=C/(u,w,I_2(N), G, H)$ is a Cohen-Macaulay ring of dimension $n$. In particular, $\hht \mathcal{Q}_1=n$.
On the other hand,
$$C/\mathcal{Q}_2\cong k[x_1,\ldots,x_{n-2},w,y_{n-2},s,t]/(y_{n-2}w,sw+\beta_{n-2}y_{n-2}x_{n-2})$$
is a Cohen-Macaulay ring of dimension $n$, which yields $\hht \mathcal{Q}_2=n$. It follows, by \eqref{aprox_p}, that $\hht\mathfrak{p}\geq n$. This is a contradiction, because  $\mathfrak{p}$ is an associated prime of $C/\mathcal{K}$, which is Cohen-Macaulay of codimension $n-1$. So, $u$ is $C/\mathcal{K}$-regular. Now, by localizing in $u$ and setting $D:=k[x_1, \ldots, x_{n-2}, w, u, y_1, s, t]$, routine calculations give 
$$(C/\mathcal{K})[u^{-1}]\cong D[u^{-1}]/(G, H)D[u^{-1}]\cong k[x_1, \ldots, x_{n-2}, w, u, y_1][u^{-1}],$$ which is a domain. Hence, $C/\mathcal{K}$ is a domain, which proves Claim 2.


To conclude the proof of the theorem, we notice that since $\mathcal{K}\subset \mathcal{J}$ are prime ideals of the same codimension, then necessarily $\mathcal{K}=\mathcal{J}.$ In particular, $\mathscr{R}(J_f)\cong C/\mathcal{J}$ is Cohen-Macaulay.
\qed

\section{Second family: linear free divisors in ${\mathbb P}^{2n-1}$}\label{secondfam}

In order to describe our second family of free divisors, consider the standard graded polynomial ring $R=k[x_1,\ldots,x_{2n-2},w, u]$ in $2n\geq 4$ indeterminates over $k$. Let
$$f=wuq, \quad q=(x_1u-x_{2}w)(x_3u-x_4w)\cdots(x_{2(n-1)-1}u-x_{2(n-1)}w).$$ For every $1\leq i\leq n-1$, denote $q_i=q/(x_{2i-1}u-x_{2i}w)\in R$. Then,
\begin{equation}\label{derivativesxy}
f_{x_{2i-1}}=u^2wq_i \quad \mbox{and} \quad f_{x_{2i}}=-w^2uq_i \quad (1\leq i\leq n-1),
\end{equation}
\begin{equation}\label{derivativeswu}
f_w=qu-wu\sum_{i=1}^{n-1}x_{2i}q_i\quad \mbox{and}\quad f_u=qw+wu\sum_{i=1}^{n-1}x_{2i-1}q_i.
\end{equation}
Using \eqref{derivativesxy} and \eqref{derivativeswu} we easily deduce the following relations:
\begin{equation}\label{equationsfiber}
\det\left(\begin{array}{cc}
f_{x_{2i-1}}&f_{x_{2j-1}}\\
f_{x_{2i}}&f_{x_{2j}}
\end{array}\right)=0 \quad (1\leq i<j\leq n-1),
\end{equation}
\begin{equation}\label{syz1}
wf_{x_{2i-1}}+uf_{x_{2i}}=0 \quad(1\leq i\leq n-1),
\end{equation}
$$wf_w+uf_u=(n+1)f,$$
\begin{equation}\label{syz3}
x_{2i-1}f_{x_{2i-1}}+x_{2i}f_{x_{2i}}=f \quad(1\leq i\leq n-1),
\end{equation}
\begin{equation}\label{syz4}
(n+1)x_{2i-1}f_{x_{2i-1}}+(n+1)x_{2i}f_{x_{2i}}-uf_u-wf_w=0 \quad (1\leq i\leq n-1).
\end{equation}
Set $\alpha=a_1a_2,$ $\beta=b_1b_2$ and $\gamma=a_1b_2+a_2b_1$. In addition to the equalities above, we have   
{\small
\begin{eqnarray}
(n+1)\sum_{i=1}^{n-1} x_{2i}f_{x_{2i}}
&=&-(n+1)w^2u \sum_{i=1}^{n-1}x_{2i}q_i\nonumber\\
&=&(n+1)[ w(f_w-qu)]\nonumber\\
&=& n wf_w-uf_u+ (uf_u+ wf_w) -(n+1)f\nonumber\\
&=&nwf_w-uf_u. \label{syz5}
\end{eqnarray}}

Now we are in a position to prove the first result of this section.

\begin{Theorem}\label{second-fam} Maintain the above notations. The following assertions hold:
	\begin{enumerate}
		\item[{\rm (i)}] $f$ is a linear free divisor;
		\item[{\rm(ii)}] The $2n\times (2n-1)$ matrix
	\begin{equation}\label{presentmatrix}
		\psi_n=\arraycolsep=4pt 
		\medmuskip = 4mu
		\left[\begin{array}{cc|c|cc|cccc}
			w&(n+1)x_1&\ldots&0&0&0\\
			u&(n+1)x_2&\ldots&0&0&(n+1) x_2\\
			\hline
			\vdots&\vdots&\ddots&\vdots&\vdots&\vdots\\
			\hline
			0&0&\cdots&w&(n+1)x_{2n-3}&0\\
			0&0&\cdots&u&(n+1)x_{2n-2}&(n+1) x_{2n-2}\\
			\hline
			0&-w&\cdots&0&-w&-n w\\
			0&-u&\cdots&0&-u& u
		\end{array}\right]
	\end{equation}
is a syzygy matrix of $J_f$. Thus a free basis of $T_{R/k}(f)$ is $\{\theta_1, \ldots, \theta_{2n-1}, \varepsilon_{2n}\}$, where the $\theta_i$'s correspond to the columns of $\psi_n$; 
	 \item[(iii)] ${\rm reg}\,{\rm Der}_k(R/(f))=2(n-1)$.
	\end{enumerate}
\end{Theorem}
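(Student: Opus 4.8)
The plan is to prove (ii) and (i) together by producing an explicit free basis of $T_{R/k}(f)$ via Saito's criterion, and then to read off (iii) from the regularity formula.

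First I would observe that the relations \eqref{syz1}, \eqref{syz4} and \eqref{syz5} established above exhibit exactly $2n-1$ syzygies of $J_f$: the $n-1$ columns of type $[\,w,\,u\,]^t$ coming from \eqref{syz1}, the $n-1$ columns coming from \eqref{syz4} carrying the pair $[\,(n+1)x_{2i-1},\,(n+1)x_{2i}\,]^t$ together with the tail $[\,-w,\,-u\,]^t$, and the single extra column coming from \eqref{syz5}. Arranging these as columns produces precisely the matrix $\psi_n$ of \eqref{presentmatrix}; in particular every column of $\psi_n$ is a syzygy, so $\psi_n$ is at least a complex presenting $J_f$, and all its entries are linear forms. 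What remains for (ii) is to see that these $2n-1$ syzygies form a \emph{free basis} of $\syz(J_f)$.

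For this, and for (i) simultaneously, I would invoke Lemma \ref{Saito-crit}. Let $\theta_1,\ldots,\theta_{2n-1}$ be the derivations whose coefficient vectors are the columns of $\psi_n$, and let $\varepsilon_{2n}$ be the Euler derivation, with coefficient vector $(x_1,\ldots,x_{2n-2},w,u)^t$. Form the $2n\times 2n$ matrix $\Sigma=[\theta_j(x_i)]$, whose first $2n-1$ columns are those of $\psi_n$ and whose last column is the Euler vector. The key point is to prove
$$\det\Sigma \;=\; \lambda\,f, \qquad \text{for some } 0\neq\lambda\in k.$$
Granting this, Lemma \ref{Saito-crit} gives at once that $f$ is a free divisor and that $\{\theta_1,\ldots,\theta_{2n-1},\varepsilon_{2n}\}$ is a free basis of $T_{R/k}(f)$. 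Since $T_{R/k}(f)=\syz(J_f)\oplus R\varepsilon_{2n}$ and each $\theta_j$ satisfies $\theta_j(f)=0$, the $\theta_j$ constitute a free basis of $\syz(J_f)$; thus $\psi_n$ is a minimal syzygy matrix, finishing (ii). As its entries are linear forms, $J_f$ is linearly presented, so $f$ is by definition a linear free divisor, finishing (i).

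The evaluation of $\det\Sigma$ is where the real work lies, and is the step I expect to be the main obstacle. I would exploit the block pattern of $\Sigma$: the rows split into the coordinate pairs $(x_{2i-1},x_{2i})$, $1\leq i\leq n-1$, together with the pair $(w,u)$, and within each coordinate pair the type-\eqref{syz1} and type-\eqref{syz4} columns meet in the $2\times 2$ block $\left[\begin{smallmatrix} w & (n+1)x_{2i-1}\\ u & (n+1)x_{2i}\end{smallmatrix}\right]$. A Laplace expansion along the $n-1$ columns of type \eqref{syz1}, or equivalently an inductive reduction peeling off one coordinate pair at a time, should collapse $\Sigma$ to a small explicit core supported on the $w,u$ rows and the last two columns, yielding a nonzero constant times $wuq=f$; the case $n=2$ already produces $\det\Sigma=12\,f$, which confirms both the shape of the answer and that $\lambda\neq 0$. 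Should this bookkeeping become unwieldy, an alternative is to retain $\psi_n$ as the candidate presentation and instead verify $\hht I_{2n-1}(\psi_n)\geq 2$ by a coordinate-specialization argument as in the proof of Theorem \ref{free_div_normal_curve}, whence the Hilbert-Burch theorem (via Lemma \ref{commut}) shows that $J_f$ is codimension-$2$ perfect and linearly presented. Finally, (iii) is immediate: $f$ is a linear free divisor with $\deg f=2n$, so Lemma \ref{regder} yields ${\rm reg}\,{\rm Der}_k(R/(f))=2n-2=2(n-1)$.
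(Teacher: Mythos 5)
Your overall architecture is sound, and your logical deductions are all valid: applying Lemma \ref{Saito-crit} to $\Sigma=[\psi_n\,|\,\varepsilon_{2n}]$ would indeed yield (i) and (ii) simultaneously, your argument that a Saito basis containing the columns of $\psi_n$ forces those columns to be a free basis of the syzygy module (via the decomposition $T_{R/k}(f)={\rm Syz}(J_f)\oplus R\varepsilon_{2n}$) is correct, and (iii) is handled exactly as in the paper via Lemma \ref{regder}. The problem is that the one step carrying all the content -- the identity $\det\Sigma=\lambda f$ with $\lambda\neq 0$ -- is asserted (``should collapse'') rather than proved, and your two suggested expansions do not collapse as readily as you hope: the last two rows of $\Sigma$ meet $n+1$ columns (the $n-1$ tails $(-w,-u)$ of the type-\eqref{syz4} columns, the \eqref{syz5} column, and the Euler column), so Laplace expansion there produces many terms, while expansion along the type-\eqref{syz1} columns, each of which has two nonzero entries, produces up to $2^{n-1}$ terms.

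This is precisely the difficulty the paper's proof is engineered to avoid, and the comparison is instructive. The paper does not apply Saito's criterion to $[\psi_n\,|\,\varepsilon_{2n}]$; it applies it to the auxiliary matrix $\mathscr{M}$ whose ``pair'' columns come from the relations \eqref{syz3}, i.e.\ from logarithmic derivations satisfying $\theta(f)=f$ rather than from syzygies. Those columns are supported on only two rows each, so the last two rows of $\mathscr{M}$ meet only two columns and $\det\mathscr{M}=(n+1)f$ drops out of a single block Laplace expansion; part (ii) is then deduced afterwards, cheaply, from the perfectness of $J_f$ (Lemma \ref{commut}) together with the facts that the columns of $\psi_n$ are syzygies and that $\psi_n$ has maximal rank (one nonzero maximal minor, computed modulo $w$). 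Your gap is closable along your own lines: adding the Euler column to each type-\eqref{syz4} column kills all the tails and exhibits $\Sigma$ as $\mathscr{M}$ multiplied by an invertible constant matrix, whence $\det\Sigma=\pm\,2n(n+1)^{n-1}f$, consistent with your value $12f$ at $n=2$. Your fallback route -- columns are syzygies plus $\hht I_{2n-1}(\psi_n)\geq 2$, then Hilbert--Burch/Buchsbaum--Eisenbud as in Theorem \ref{free_div_normal_curve} -- is also viable, but note that it requires the full height bound on the ideal of maximal minors, strictly more than the single nonzero minor the paper needs once perfectness is known from (i), and as written neither of your two computations is carried out; the proposal stands or falls with them.
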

\demo (i) Consider the $2n\times 2n$ matrix
$$\mathscr{M}=\arraycolsep=4pt 
\medmuskip = 4mu
\left[\begin{array}{cc|c|cc|cccc}
w&x_1&\ldots&0&0&0&x_1\\
u&x_2&\ldots&0&0&(n+1) x_2&x_2\\
\hline
\vdots&\vdots&\ddots&\vdots&\vdots&\vdots&\vdots\\
\hline
0&0&\cdots&w&x_{2n-3}&0&x_{2n-3}\\
0&0&\cdots&u&x_{2n-2}&(n+1) x_{2n-2}&x_{2n-2}\\
\hline
0&0&\cdots&0&0&-n w&w\\
0&0&\cdots&0&0& u&u
\end{array}\right].$$ Using \eqref{syz1}, \eqref{syz3}, \eqref{syz5}, and the Euler relation, it is easy to see that
$$\nabla f\cdot \mathscr{M}=[\begin{array}{cc|c|cc|cccccc}0&f&\cdots&0& f&0&2nf\end{array}],$$
so that $\nabla f\cdot\mathscr{M}\equiv \boldsymbol0\mod f$. Moreover, $(n+1)f=\det \mathscr{M}.$ Thus, by Lemma \ref{Saito-crit} (or, in this case, by the version of Saito's criterion stated in \cite[Theorem 2.4]{B-C}), we conclude that $f$ is a linear free divisor.

\medskip

(ii) For simplicity, write $\psi_n = \psi$. By (i) and Lemma \ref{commut}, we  know that $J_f$ is a codimension 2 perfect ideal, so it suffices to prove that $\nabla f\cdot\psi=\boldsymbol0$ and that $\psi$ has maximal rank. The former follows by \eqref{syz1}, \eqref{syz4} and \eqref{syz5}. Now denote by $\Delta$ the $(2n-1)$-minor of $\psi$ obtained by omitting the $2n$-th row of $\psi.$ It is easy to see that $\Delta$ modulo $w$ is given by $(n+1)^{n}x_1x_3\cdots x_{2n-3}u^{n}.$ In particular, $\Delta$ is non-zero as well. Hence, $\psi$ has maximal rank. 

\medskip

(iii) By part (i), $f$ is a linear free divisor (in $2n$ variables). Now we apply Lemma \ref{regder}.\qed

\medskip


For the next results, we consider a set of  $2n$ variables $z_1,\ldots,z_{2n-2},s,t$ over $R$ as well as the natural epimorphism 
$$S:=k[x_1, \ldots,x_{2n-2}, w, u,z_1,\ldots,z_{2n-2},s,t]\surjects \mathscr{R}(J_f)$$ whose kernel we denote $\mathcal{J}$. By the equalities \eqref{equationsfiber} we have an  inclusion
$$I_2\left[\begin{array}{cccccccccc}z_{1}&z_{3}&\ldots&z_{2n-3}\\z_2&z_4&\ldots&z_{2n-2}\end{array}\right]\subset \mathcal{J}.$$
Therefore, 
$$\mathcal{K}:=\left(I_1(\gamma \cdot\psi_n),\,I_2\left[\begin{array}{cccccccccc}z_{1}&z_{3}&\ldots&z_{2n-3}\\z_2&z_4&\ldots&z_{2n-2}\end{array}\right]\right)\subset \mathcal{J}$$
where $\gamma=\left[\begin{array}{cccccccccc}z_{1}&\ldots&z_{2n-2} & s & t\end{array}\right].$ The generators of $I_1(\gamma \cdot\psi_n)$ are of three types:
\begin{equation}\label{type1}
wz_{2i-1}+uz_{2i} \quad(1\leq i\leq n-1),
\end{equation}
$$F_i:=(n+1)(x_{2i-1}z_{2i-1}+x_{2i}z_{2i})-ws-ut \quad(1\leq i\leq n-1),$$
$$G:= (n+1)\sum_{i=1}^{n-1} x_{2i}z_{2i}-n ws+ ut.$$ We can use the generators of type \eqref{type1} as well as the ideal $I_2\left[\begin{array}{cccccccccc}z_{1}&z_3&\ldots&z_{2n-3}\\z_2&z_4&\ldots&z_{2n-2}\end{array}\right]$ to rewrite $\mathcal{K}$ as
$$\mathcal{K}=\left(\underbrace{I_2\left[\begin{array}{cccccccccc}z_{1}&z_3&\ldots&z_{2n-3}&-u\\z_2&z_{4}&\ldots&z_{2n-2}&w\end{array}\right]}_{=:\mathcal{K}_0},F_1,\ldots,F_{n-1},G\right)$$
With this, we have $$S/\mathcal{K}\cong A[x_1,\ldots,x_{2n-2},s,t]/(F_1,\ldots,F_{n-1},G)A[x_1,\ldots,x_{2n-2},s,t]$$
where $A:=k[z_1,\ldots,z_{2n-2},w,u]/\mathcal{K}_0.$ Now, consider the $2n\times n$ matrix
$$\zeta= \arraycolsep=3pt 
	\medmuskip = 4mu
	\left[\begin{array}{c|c|c|c|cccc}
	(n+1)z_1&0&\ldots&0&0\\
	(n+1)z_2&0&\ldots&0&(n+1)z_2\\
	\hline
	\vdots&\vdots&\ddots&\vdots&\vdots\\
	\hline
	0&0&\ldots&(n+1)z_{2n-3}&0\\
	0&0&\ldots&(n+1)z_{2n-2}&(n+1) z_{2n-2}\\
	\hline
	-w&-w&\ldots&-w&-nw\\
	-u&-u&\ldots&-u& u
	\end{array}\right]$$ taken as a matrix with entries in the domain $A$. We denote by $M$ the $A$-module defined as the cokernel of $\zeta$.

\begin{Proposition}\label{symM} Maintain the above notations. Then:
	\begin{enumerate}
		\item[(i)] $M$ is an $A$-module of projective dimension $1$;
		\item[(ii)] The symmetric algebra ${\rm Sym}_AM$ is a  Cohen-Macaulay domain of dimension $2n+1$.
	\end{enumerate}
\end{Proposition}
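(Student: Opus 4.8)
The plan is to read both parts off the candidate presentation $A^{n}\stackrel{\zeta}\longrightarrow A^{2n}\to M\to 0$, using that $A$ is a Cohen-Macaulay normal domain (being the coordinate ring of a generic $2\times n$ determinantal variety $\mathcal{K}_0$, of dimension $n+1$). For (i), over the domain $A$ an $A$-linear map of free modules is injective exactly when its ideal of maximal minors is nonzero, so it suffices to exhibit one nonvanishing $n\times n$ minor of $\zeta$. Choosing the rows $1,3,\ldots,2n-3$ together with the bottom row $2n$ yields a block lower-triangular submatrix with determinant $\pm(n+1)^{n-1}u\,z_1z_3\cdots z_{2n-3}$; since none of $u,z_1,z_3,\ldots,z_{2n-3}$ belongs to the prime $\mathcal{K}_0$ and $A$ is a domain, this element is nonzero. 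Hence $\zeta$ is injective, the displayed sequence is a free resolution, and $\mathrm{pd}_A M\le 1$; it equals $1$ because every entry of $\zeta$ lies in the irrelevant ideal of $A$, so $I_n(\zeta)\ne A$ and $M$ is not free.

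For (ii), set $B:=A[x_1,\ldots,x_{2n-2},s,t]$, a polynomial extension of the Cohen-Macaulay domain $A$, so $B$ is a Cohen-Macaulay domain with $\dim B=\dim A+2n=3n+1$. The identifications preceding the proposition give ${\rm Sym}_A M\cong B/(F_1,\ldots,F_{n-1},G)$, presented by the $n$ linear forms $\underline{T}\cdot\zeta$. Since $B$ is Cohen-Macaulay and these forms lie in the irrelevant ideal, they form a regular sequence if and only if $\hht(F_1,\ldots,F_{n-1},G)=n$; granting this, ${\rm Sym}_A M$ is a complete intersection in $B$, hence Cohen-Macaulay of dimension $3n+1-n=2n+1$. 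This height equality is in turn equivalent to the Fitting-type estimate $\hht_A I_t(\zeta)\ge n-t+1$ for $1\le t\le n$, equivalently $\rk\zeta(\mathfrak{p})\ge n-\hht\mathfrak{p}$ for every prime $\mathfrak{p}$ of $A$ (using that $A$ is equidimensional and catenary); by the Huneke-Rossi dimension formula the latter forces $\dim{\rm Sym}_A M=\dim A+\rk M=2n+1$, which is exactly the needed height statement.

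The main obstacle is precisely this Fitting estimate, i.e. controlling the locus in $\mathrm{Spec}\,A$ where $\rk\zeta$ drops. I would verify it by exploiting the determinantal structure of $A$: for $t=1$ one has $I_1(\zeta)=(z_1,\ldots,z_{2n-2},w,u)A$, the irrelevant ideal, of height $n+1$, so the estimate is slack there; for the intermediate $t$ I would use that $\mathcal{K}_0$ makes all columns of its defining $2\times n$ matrix proportional to a single vector, parametrize the generic rank-one locus accordingly, and bound, stratum by stratum, the codimension where further minors of $\zeta$ vanish, in the spirit of the successive height computations carried out in the proof of Theorem \ref{spread4-CM}.

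Finally, for the domain assertion I see two routes. The first is to sharpen the estimate to $\hht_A I_t(\zeta)\ge n-t+2$, whence $M$ is of linear type, ${\rm Sym}_A M=\mathscr{R}(M)$, and the latter is automatically a domain as a Rees algebra of a torsion-free module over the domain $A$. The second, closer to the explicit style of Theorem \ref{spread4-CM}, is to argue directly: the complete intersection ${\rm Sym}_A M$ is Cohen-Macaulay, hence satisfies $\mathrm{S}_2$, and a Jacobian/localization computation — inverting $u$ collapses $A[u^{-1}]$ to a localized polynomial ring and turns $F_1,\ldots,F_{n-1},G$ into a partial change of coordinates — shows it is regular in codimension one, hence normal, hence (being connected) a domain.
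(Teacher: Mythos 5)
Your reduction is sound and in fact mirrors the paper's own strategy: part (i) is essentially the paper's argument (exhibit a nonvanishing maximal minor of $\zeta$; you use the row of $u$'s where the paper uses the row of $w$'s, both giving a product of odd-indexed $z$'s times a variable, nonzero in the domain $A$), and for (ii) your chain ``Fitting-type height bounds on $I_t(\zeta)$ $\Rightarrow$ the $n$ linear forms have height $n$ $\Rightarrow$ regular sequence $\Rightarrow$ complete intersection, hence Cohen--Macaulay of dimension $3n+1-n=2n+1$'' is precisely what the paper extracts from \cite[Theorem 1.1]{Hun}, whose hypothesis, read inside $A$, is $\hht_A I_t(\zeta)\geq n-t+2$, i.e.\ $\hht(I_t(\zeta)+\mathcal{K}_0)\geq 2n-t+1$ in the ambient polynomial ring. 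The genuine gap is that you never prove these height estimates: you call them ``the main obstacle'' and then only record an intention (``parametrize the generic rank-one locus\dots bound, stratum by stratum''). That verification is the mathematical core of the proposition and occupies nearly all of the paper's proof: after permuting the rows of $\zeta$ into odd/even blocks $N_{\rm odd}$, $N_{\rm even}$, one shows any prime $P\supseteq I_t(\zeta)+\mathcal{K}_0$ contains the ideal of $t$-fold products of $\{z_1,z_3,\ldots,z_{2n-3},u\}$ (and of the even set with $w$), invokes the description of the minimal primes of such product ideals from \cite[Section 2]{To2}, and then runs a dichotomy (condition $(\dag)$ in the paper): in one case $P$ contains $2n-t+1$ variables outright; in the other, a suitably chosen $t\times t$ minor of $\zeta$ plus the smaller determinantal ideal $\widetilde{\mathcal{K}}_0$ of height $t-3$ yields $\hht P\geq 2(n-t+2)+(t-3)=2n-t+1$. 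Nothing in your sketch substitutes for this combinatorial analysis, so the proposal is incomplete exactly where the work is.

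A second, smaller gap concerns the domain property. Your route one needs the sharpened bound $\hht_A I_t(\zeta)\geq n-t+2$ --- which is the same missing estimate, not an alternative to it (note your Cohen--Macaulayness argument only uses the weaker bound $n-t+1$, so the sharpening is not a formality). Your route two (Serre's criterion) is not finished by inverting $u$: that localization only controls the singular locus where $u\neq 0$, and you would still have to handle height-one primes of ${\rm Sym}_AM$ containing $u$, which is where the difficulty sits. The paper sidesteps both issues because, once the height hypotheses of \cite[Theorem 1.1]{Hun} are verified, one gets that ${\rm Sym}_AM$ is a Cohen--Macaulay domain, may identify ${\rm Sym}_AM=\mathscr{R}_A(M)$, and then reads off the dimension $\dim A+\rk M=2n+1$ from \cite[Proposition 2.2]{SUV}.
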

\demo (i) Consider the complex 
\begin{equation}\label{complex}0\lar A^{n}\stackrel{\zeta}\lar A^{2n}\lar M\lar 0.\end{equation} By the well-known Buchsbaum-Eisenbud acyclicity criterion, in order to show that (\ref{complex}) is exact it suffices to confirm that $\rk \zeta=n$. To this end, consider the following $n\times n$ submatrix  of $\zeta$:
$$\eta:=\left[\begin{array}{cccccccc}(n+1)z_1&\cdots&0&0\\\vdots&\ddots&\vdots&\vdots\\0&\ldots&(n+1)z_{2n-3}&0\\
-w&\ldots&-w&-n w\end{array}\right].$$
We have $\det\eta= -n(n+1)^{n-1} z_1\cdots z_{2n-3}w\neq 0\, (\bmod \mathcal{K}_0)$. Hence, $\zeta$ has rank $n$.

\medskip


(ii) In addition to the property given in (i), recall $A$ is a Cohen-Macaulay domain and $\hht \mathcal{K}_0=n-1$. Then, because of \cite[Theorem 1.1]{Hun}, it suffices to show that $$\mathrm{ht}(I_t(\zeta)+\mathcal{K}_0)\geq 2n-t+1$$ for every $1\leq t\leq n$. For this note first that, by suitably permuting the rows of $\zeta$, we obtain a matrix $N$ of the form
$$N= \arraycolsep=2pt 
	\medmuskip = 2mu
	\left[\begin{array}{cccccccccc}
		\ast z_1&\cdots&0&\ldots&0&0\\
		\vdots&\ddots&\vdots&\ldots&\vdots&\vdots\\
		0&0&\ast z_{2i-1}&\ldots&0&0\\
		\vdots&\vdots&\vdots&\ddots&\vdots&\vdots\\
		0&0&0&\ldots&\ast z_{2n-3}&0\\
		-u&-u&-u&\ldots&-u&u\\
		\hline
		\ast z_2&\cdots&0&\ldots&0&\ast z_{2}\\
		\vdots&\ddots&\vdots&\cdots&\vdots&\vdots\\
		0&\cdots&\ast z_{2i}&\ldots&0&\ast z_{2i}\\
		\vdots&\vdots&\vdots&\ddots&\vdots&\vdots\\
		0&0&0&\ldots&\ast z_{2n-2}&\ast z_{2n-2}\\
		-w&-w&-w&\ldots&-w&-nw
	\end{array}\right]$$
where all coefficients $\ast$ are  equal to $n+1$. Let us denote the top and bottom blocks of $N$ by $N_{\rm odd}$ and $N_{\rm even}$, respectively. Our goal is to prove $\mathrm{ht}(I_t(N)+\mathcal{K}_0)\geq 2n-t+1$ whenever $1 \leq t \leq n$, where as before $$\mathcal{K}_0=I_2\left[\begin{array}{cccccccccc}z_{1}&z_3&\ldots&z_{2n-3}&-u\\z_2&z_{4}&\ldots&z_{2n-2}&w\end{array}\right].$$

Let $P$ be a prime ideal containing $I_t(N)+\mathcal{K}_0$ and having the same codimension. From $N_{\rm odd}$ it is easy to see that the ideal $C_t$ generated by  the $t$-products  of the set $\{z_{1},z_3,\ldots,z_{2n-3},z_{2n-1}:=u\}$ is contained in $P.$ But, by \cite[Section 2]{To2}, the minimal primes of $C_t$ are of the form $(z_{2j_1-1},\ldots,z_{2j_{n-t+1}-1})$ for certain $1\leq j_1<\cdots<j_{n-t+1}\leq n.$ Hence, we can suppose that $(z_{2j_1-1},\ldots,z_{2j_{n-t+1}-1})\subset P$ with $1\leq j_1<\cdots<j_{n-t+1}\leq n$.  Analogously, from $N_{\rm even}$ we can write $(z_{2i_1},\ldots,z_{2i_{n-t+1}})\subset P$ for certain $1\leq i_1<\cdots<i_{n-t+1}\leq n$ (we put $z_{2n}:=w$).

Let us assume that the following condition takes place: 
$$(\dag)\hspace*{2.5cm}\mathrm{There}~\mathrm{exists}~ j\in \{1, \ldots, n\}~\mathrm{such}~\mathrm{that}~\{z_{2j-1},z_{2j}\}\cap P=\{z_{2j-1}\}~\mbox{or}~\{z_{2j}\}.$$ 

Suppose $\{z_{2j-1},z_{2j}\}\cap P=\{z_{2j-1}\}.$ Then, by the relations in $\mathcal{K}_0$, all the odd variables belong to $P$. Therefore, we have $$\underbrace{(n-t+1)}_{\mbox{even variables}}+\underbrace{n}_{\mbox{odd variables}}= \, 2n-t+1~\mathrm{variables}~\mathrm{in}~ P,$$ which gives $\mathrm{ht}(I_t(N)+\mathcal{K}_0)\geq 2n-t+1$ for $1 \leq t \leq n$. The argument for the case $\{z_{2j-1},z_{2j}\}\cap P=\{z_{2j}\}$  is similar.


Now, suppose that $(\dag)$ is not true. Without loss of generality, we may assume  $(j_1, \ldots, j_{n-t+1}) =  (1, \ldots, n-t+1)$. It follows that $z_{1},z_{2},\ldots,z_{2(n-t+1)-1},z_{2(n-t+1)}\in P.$  Consider the following $t\times t$ submatrix of $\zeta$:
$$\arraycolsep=2pt 
	\medmuskip = 2mu
	\left[\begin{array}{ccccccccc}
		0&\ast z_{2(n-t+1)+1}&0&\ldots&0&0\\
		0&0&\ast z_{2(n-t+2)+1}&\ldots&0&0\\
		\vdots&	\vdots&\vdots&\ddots&\vdots&\vdots\\
		0&0&0&\ldots&\ast z_{2n-3}&0\\
		-u&-u&-u&\ldots&-u&u\\
		-w&-w&-w&\ldots&-w& -nw
	\end{array}\right].$$
The determinant of this matrix is  $cz_{2(n-t+1)+1}\cdots z_{2n-3}z_{2n-1}z_{2n}$ for some $c\in k$; in particular, this determinant lies in $P$ and hence $z_s\in P$ for some $s$ with $2(n-t+1)+1\leq s\leq 2n$.  Therefore, since we are assuming that $(\dag)$ does not hold, there exist two consecutive indices $2j-1$, $2j$ with $2(n-t+1)+1\leq 2 j-1,\, 2j\leq 2n$ satisfying $z_{2j-1},z_{2j}\in P$. Now we can suppose, without loss of generality, that $2j-1=2(n-t+1)+1$. We have
$$(z_{1},z_{2},\ldots,z_{2(n-t+1)-1},z_{2(n-t+1)},z_{2(n-t+1)+1},z_{2(n-t+1)+2})+\mathcal{K}_0\subset  P.$$ Hence, considering the subideal $$\widetilde{\mathcal{K}}_0:=I_2\left[\begin{array}{cccccccccc}z_{2(n-t+2)+1}&z_{2(n-t+3)+1}&\ldots&z_{2n-3}&-u\\z_{2(n-t+3)}&z_{2(n-t+4)}&\ldots&z_{2n-2}&w\end{array}\right] \subset \mathcal{K}_0,$$ we observe that all the variables appearing in $\widetilde{\mathcal{K}}_0$ are different from the $2(n-t+2)$ variables that already belong to $P$; since in addition $\hht \widetilde{\mathcal{K}}_0 =t-3$, we conclude $$\mathrm{ht}(I_t(N)+\mathcal{K}_0)=\hht P \geq 2(n-t+2)+(t-3)=2n-t+1$$ whenever $1 \leq t \leq n$, as needed. 

So we have shown that ${\rm Sym}_AM$ is a  Cohen-Macaulay domain. Note that $M$ possesses a rank as an $A$-module (equal to $n$, by (\ref{complex})). Now recall that the Rees algebra of the $A$-module $M$, denoted $\mathscr{R}_A(M)$, can be defined as the quotient of ${\rm Sym}_AM$ by its $A$-torsion submodule (see \cite{SUV} for the general theory). Consequently, since in this case $A$ and ${\rm Sym}_AM$ are both domains, we can identify ${\rm Sym}_AM=\mathscr{R}_A(M)$; in particular, using \cite[Proposition 2.2]{SUV} (which gives a formula for the dimension of the Rees algebra of a module with rank) and noticing that $\dim A=2n-(n-1)=n+1$, we finally get $$\dim {\rm Sym}_AM=\dim \mathscr{R}_A(M)=\dim A + {\rm rank}_AM= (n+1)+n=2n+1.$$
\qed

\begin{Theorem}\label{K=J} Maintain the above notations. Then:
	\begin{enumerate}
		\item[(i)] $\mathcal{K}=\mathcal{J}$;
		\item[(ii)] The Rees algebra $\mathscr{R}(J_f)$ is Cohen-Macaulay;
		\item[(iii)] Let  $T=k[z_1,\ldots,z_{2n-2},s,t]$, with $n\geq 3$. Then, $$\mathscr{F}(J_f)\cong T/I_2\left[\begin{array}{cccccccccc}z_{1}&z_3&\ldots&z_{2n-3}\\z_2&z_{4}&\ldots&z_{2n-2}\end{array}\right]$$ as $k$-algebras. In particular, $\mathscr{F}(J_f)$ is Cohen-Macaulay,  $\ell(J_f)=n+2$, and ${\rm r}(J_f)=1$.
	\end{enumerate}
\end{Theorem}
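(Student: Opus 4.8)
The plan is to identify the presentation $S/\mathcal{K}$ with the symmetric algebra studied in Proposition~\ref{symM}, and then to force $\mathcal{K}=\mathcal{J}$ by a dimension count. Recall from the paragraph preceding the statement that
$$S/\mathcal{K}\,\cong\,A[x_1,\ldots,x_{2n-2},s,t]\big/(F_1,\ldots,F_{n-1},G),$$
and that $F_1,\ldots,F_{n-1},G$ are $A$-linear in $x_1,\ldots,x_{2n-2},s,t$. Relabelling these variables as $T_1,\ldots,T_{2n-2},T_{2n-1}=s,T_{2n}=t$, a direct inspection shows that the coefficient row of $F_i$ (for $1\le i\le n-1$) is exactly the $i$-th column of $\zeta$, while that of $G$ is the last column; that is, $(F_1,\ldots,F_{n-1},G)=I_1(\underline{T}\cdot\zeta)$ with $\underline{T}=[\,T_1\ \cdots\ T_{2n}\,]$. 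Hence $S/\mathcal{K}\cong {\rm Sym}_AM$, and Proposition~\ref{symM}(ii) immediately yields that $S/\mathcal{K}$ is a Cohen-Macaulay domain of dimension $2n+1$.

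For (i), I would compare heights in the polynomial ring $S$. Since $J_f$ has positive height in the $2n$-dimensional domain $R=k[x_1,\ldots,x_{2n-2},w,u]$, one has $\dim\mathscr{R}(J_f)=\dim R+1=2n+1$, so $\dim S/\mathcal{J}=2n+1=\dim S/\mathcal{K}$. As $S$ is a polynomial ring over a field, the dimension formula $\dim S/P+\hht P=\dim S$ holds for every prime $P$; hence $\hht\mathcal{K}=\hht\mathcal{J}$, and together with the inclusion $\mathcal{K}\subseteq\mathcal{J}$ of primes (both quotients being domains) we conclude $\mathcal{K}=\mathcal{J}$. Then (ii) is immediate, since $\mathscr{R}(J_f)\cong S/\mathcal{J}=S/\mathcal{K}$ is Cohen-Macaulay by the first paragraph.

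For (iii), I would pass to the special fiber via $\mathscr{F}(J_f)\cong S/(\mathcal{J}+R_+S)=S/(\mathcal{K}+R_+S)$, where $R_+=(x_1,\ldots,x_{2n-2},w,u)$ and $S/R_+S=T$. Reducing the generators of $\mathcal{K}$ modulo $R_+$, the forms $F_i$ and $G$ all vanish, and in $\mathcal{K}_0$ every $2\times2$ minor using the last column $(-u,w)^t$ dies, leaving precisely the ideal of $2\times2$ minors of the generic $2\times(n-1)$ matrix on $z_1,\ldots,z_{2n-2}$. This is the asserted isomorphism $\mathscr{F}(J_f)\cong T/I_2[\ldots]$. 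The numerical consequences follow from standard facts on generic determinantal ideals: here $I_2$ is perfect of height $n-2$, so $\mathscr{F}(J_f)$ is Cohen-Macaulay of dimension $2n-(n-2)=n+2$, whence $\ell(J_f)=n+2$; moreover it is resolved by the linear Eagon-Northcott complex, so ${\rm reg}\,\mathscr{F}(J_f)=1$, and Lemma~\ref{r-reg} gives ${\rm r}(J_f)=1$.

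Because the delicate acyclicity and height estimates were already dispatched in Proposition~\ref{symM}, no genuinely hard step remains here. The one place demanding care is the bookkeeping that matches the columns of $\zeta$ with the linear forms $F_1,\ldots,F_{n-1},G$, which is what legitimizes the identification $S/\mathcal{K}\cong {\rm Sym}_AM$; after that, parts (i) and (ii) reduce to a height count, and part (iii) to the well-understood structure of $T/I_2$.
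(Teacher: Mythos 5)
Your proposal is correct and follows essentially the same route as the paper: identify $S/\mathcal{K}$ with ${\rm Sym}_AM$, invoke Proposition~\ref{symM} to get that $\mathcal{K}$ is prime and $S/\mathcal{K}$ is Cohen--Macaulay of dimension $2n+1=\dim\mathscr{R}(J_f)$, conclude $\mathcal{K}=\mathcal{J}$ by comparing heights of the nested primes, and obtain the special fiber by reducing the generators of $\mathcal{K}$ modulo $R_+$ to the generic determinantal ideal. The only difference is that you make explicit the bookkeeping matching the columns of $\zeta$ with $F_1,\ldots,F_{n-1},G$, which the paper leaves implicit in its setup.
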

\demo  We have a natural epimorphism $${\rm Sym}_AM\cong S/\mathcal{K}\surjects S/\mathcal{J}\cong \mathscr{R}(J_f).$$ By Proposition \ref{symM}, $\mathcal{K}$ is a prime ideal of $S$, and $\dim {\rm Sym}_AM=2n+1=\dim R +1 =\dim \mathscr{R}(J_f)$. So $\hht \mathcal{K}= \hht \mathcal{J}$, and then $\mathcal{K}=\mathcal{J}$. Using Proposition \ref{symM} once again, we obtain that $\mathscr{R}(J_f)$ is Cohen-Macaulay. This proves (i) and (ii).

In order to prove (iii), let $R_+$ be the homogeneous maximal ideal of $R$. We have
$$\mathscr{F}(J_f)\cong \mathscr{R}(J_f)/R_+\mathscr{R}(J_f)\cong S/(R_+S, \,\mathcal{K})\cong T/I_2\left[\begin{array}{cccccccccc}z_{1}&z_3&\ldots&z_{2n-3}\\z_2&z_{4}&\ldots&z_{2n-2}\end{array}\right],$$
which, as is well-known (being a generic determinantal ring), is Cohen-Macaulay of dimension $n+2$ and moreover has regularity 1. The latter, by Lemma \ref{r-reg}, gives ${\rm r}(J_f)=1$.
\qed




\begin{Remark}\rm (a) The ideal $J_f$ is of linear type if and only if $n=2$ (i.e., the case where $R$ is a polynomial ring in $4$ variables). Indeed, by Theorem \ref{K=J}(iii), if $n\geq 3$ then ${\rm r}(J_f)=1\neq 0$, hence $J_f$ cannot be of linear type. Conversely, 
let $n=2$, so that $R=k[x_1, x_2, w, u]$. We can check that the ideals of minors of $\psi_2$ (see (\ref{presentmatrix})) satisfy $$\hht I_s(\psi_2) \, \geq \, 5 - s \, = \, (2n-1) + 2 - s  \quad\mbox{for}\quad s = 1, 2, 3.$$ It follows by \cite[Theorem 1.1]{Hun} that $J_f$ is of linear type (in particular, ${\rm r}(J_f)=0$). Note in addition that ${\rm Sym}_RJ_f$ is a complete intersection, i.e., the polynomials $$L_1=3x_1z_1+3x_2z_2-ws-ut, \, \, L_2=wz_1+uz_2, \, \, L_3=x_2z_1+x_1z_2+us+wt$$ form an $R[z_1, z_2, s, t]$-sequence.


It is also worth mentioning that, in 3 variables, if $g\in k[x, y, z]$ defines a rank 3 central hyperplane arrangement, then it has been recently shown that $J_g$ is of linear type and moreover that the property of the symmetric algebra of $J_g$ being a complete intersection  characterizes the freeness of $g$ (see \cite[Proposition 2.14 and Corollary 2.15]{BST}).

\medskip

\noindent (b) Let $n=3$, i.e., $R=k[x_1, x_2, x_3, x_4, w, u]$. In this case, a computation shows that the entries of the product $\left[\begin{array}{cccccccc}z_1&\cdots&z_4&s&t\end{array}\right]\cdot \psi_3$ form a regular sequence, i.e., ${\rm Sym}_RJ_f$ is a complete intersection once again.

\end{Remark}



\begin{Question}\label{fiber-ci}\rm  For an arbitrary $n$, is ${\rm Sym}_RJ_f$ a complete intersection ring? 
\end{Question}


\section{Third family: non-linear free plane curves}\label{thirdfam}

In this section we furnish our third family of free divisors and some of its properties. In fact, from such a family we will derive yet another one; see Remark \ref{secondfam-g}. Similar examples (also in 3 variables) can be found, e.g., in \cite{D-S} and \cite{N}.

\begin{Theorem}\label{last-family}
	Consider the two-parameter family of polynomials $$f \, = \, f_{\alpha, \beta} \, = \, (x^{\alpha}-y^{\alpha-1}z)^{\beta}+y^{\alpha\beta} \, \in \, R \, = \, k[x, y, z],$$ for integers $\alpha, \beta \geq 2$. The following assertions hold:
	\begin{enumerate}
		\item[{\rm (i)}] $f$ is a free divisor;
		\item[{\rm(ii)}] $\mathscr{R}(J_f)$ is  Cohen-Macaulay if $(\alpha, \beta) = (2, 3)$ or if $\alpha \geq 2$ and $\beta=2$;
         \item[(iii)] $J_f$ is not of linear type if $\alpha =2$ and $\beta \geq 3$ or if $\alpha \geq 3$ and $\beta =2$. In these cases, $\mathscr{F}(J_f)$ is a polynomial ring over $k$ and then ${\rm r}(J_f)=0$;
         \item[(iv)] $f$ is reducible over $k=\mathbb{C}$. If $k\subseteq \mathbb{R}$ then $f$ is reducible if $\beta$ is odd and irreducible otherwise;
        \item[(v)] ${\rm reg}\,{\rm Der}_k(R/(f))=\alpha \beta - 2$.
	\end{enumerate}
\end{Theorem}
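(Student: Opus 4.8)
The plan is to handle the items in the order (i),(v),(iv),(iii),(ii), since each later item leans on the explicit partials
$$f_{x}=\alpha\beta\,x^{\alpha-1}h^{\beta-1},\quad f_{z}=-\beta\,y^{\alpha-1}h^{\beta-1},\quad f_{y}=-\beta(\alpha-1)y^{\alpha-2}z\,h^{\beta-1}+\alpha\beta\,y^{\alpha\beta-1},$$
where I abbreviate $h=x^{\alpha}-y^{\alpha-1}z$. For (i) I would apply Saito's criterion (Lemma~\ref{Saito-crit}) to the triple $\{\theta_{1},\theta_{2},\varepsilon_{3}\}$. The vector $\theta_{1}=(y^{\alpha-1},0,\alpha x^{\alpha-1})$ is visibly a syzygy ($y^{\alpha-1}f_{x}+\alpha x^{\alpha-1}f_{z}=0$). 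For $\theta_{2}=(b_{1},b_{2},b_{3})$, the only way the pure term $\alpha\beta\,b_{2}y^{\alpha\beta-1}$ of $b_{2}f_{y}$ can cancel is for $h^{\beta-1}$ to divide $b_{2}$; taking the minimal choice $b_{2}=-\beta h^{\beta-1}$, the syzygy condition collapses (after dividing by $\beta h^{\beta-1}$) to the single equation $\alpha x^{\alpha-1}b_{1}-y^{\alpha-1}b_{3}=f_{y}$, which I solve by peeling off the $x^{\alpha-1}$- and $y^{\alpha-1}$-divisible parts of $f_{y}$. The point is that with these cofactors the determinant $\det[\theta_{1}\,|\,\theta_{2}\,|\,\varepsilon_{3}]$, expanded along the Euler column, has cofactors exactly $f_{x},f_{y},f_{z}$, so it equals $xf_{x}+yf_{y}+zf_{z}=\alpha\beta f$ by Euler's identity; Lemma~\ref{Saito-crit} then yields that $f$ is free. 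Item (v) is immediate from Lemma~\ref{regder}: ${\rm reg}\,{\rm Der}_{k}(R/(f))=\deg f-2=\alpha\beta-2$.

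For (iv) the engine is the factorization over $\overline{k}$,
$$f=\prod_{j=1}^{\beta}\bigl(x^{\alpha}-y^{\alpha-1}z-\omega_{j}y^{\alpha}\bigr),$$
with $\omega_{1},\dots,\omega_{\beta}$ the roots of $t^{\beta}+1$, obtained by substituting $t=h/y^{\alpha}$. Over $\C$ this displays $\beta\geq 2$ nonunit factors, so $f$ is reducible. Over $k\subseteq\R$ the behaviour is dictated by the real roots of $t^{\beta}+1$: if $\beta$ is odd then $\omega=-1$ is a root, furnishing the factor $x^{\alpha}-y^{\alpha-1}z+y^{\alpha}$ already over the prime field, so $f$ is reducible; if $\beta$ is even the roots occur in complex-conjugate pairs, and irreducibility follows from checking that no pair recombines into a factor defined over $k$. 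This descent step for even $\beta$ is the delicate, base-field-sensitive point of (iv), and is where I expect to spend the most care.

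Turning to (iii), Lemma~\ref{rank} identifies $\ell(J_{f})$ with the rank of the Hessian of $f$; a direct computation shows this rank is $3$ in the stated ranges, so $f_{x},f_{y},f_{z}$ are algebraically independent, $\mathscr{F}(J_{f})=k[f_{x},f_{y},f_{z}]$ is a polynomial ring, $\ell(J_{f})=3$, and ${\rm r}(J_{f})={\rm reg}\,\mathscr{F}(J_{f})=0$ by Lemma~\ref{r-reg}. For the failure of linear type I would examine ${\rm Sym}_{R}(J_{f})=R[T_{1},T_{2},T_{3}]/(\ell_{1},\ell_{2})$, where $\ell_{1},\ell_{2}$ are the linear forms attached to $\theta_{1},\theta_{2}$. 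Here $\ell_{1}=y^{\alpha-1}T_{1}+\alpha x^{\alpha-1}T_{3}$ vanishes on $W:=\{x=y=0\}\times\mathbb{A}^{3}_{T}$; moreover $b_{1},b_{2},b_{3}$ all lie in $(x,y)$ precisely when $\beta(\alpha-1)>\alpha$ --- which covers every case of (iii) --- and then $\ell_{2}$ vanishes on $W$ as well. Thus $W$ (dimension $4$) is a component of $V(\ell_{1},\ell_{2})$; since the Rees component dominates ${\rm Spec}\,R$ while $W$ lies over the line $x=y=0$, the two are distinct, so ${\rm Sym}_{R}(J_{f})$ is not a domain and $J_{f}$ is not of linear type. (In the excluded case $(\alpha,\beta)=(2,2)$ one has $b_{3}\equiv -2z^{2}\notin(x,y)$, so $W\not\subseteq V(\ell_{2})$ and no such extra component appears.)

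The remaining item (ii) is the one I expect to be the main obstacle. Following the template of Theorems~\ref{spread4-CM} and~\ref{K=J}, I would present $\mathscr{R}(J_{f})=C/\mathcal{J}$ and introduce a candidate $\mathcal{K}\subseteq\mathcal{J}$ generated by the syzygetic forms $\ell_{1},\ell_{2}$ (the fiber contributing nothing, by (iii)) together with the extra Rees relations; the task is then to prove that $C/\mathcal{K}$ is a Cohen-Macaulay domain of dimension $n+1=4$ by exhibiting an explicit regular element, localizing to reach a manifestly regular ring, and matching codimensions to force $\mathcal{K}=\mathcal{J}$. The arithmetic of these presentations is substantially heavier than in Section~\ref{linfreediv}, and pushing the Cohen-Macaulay/normality bookkeeping uniformly through the families $\beta=2$ (all $\alpha$) and $(\alpha,\beta)=(2,3)$ is where the genuine difficulty of the theorem resides.
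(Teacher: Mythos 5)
Your items (i) and (v) are correct and, despite the different packaging, amount to the paper's own argument: the paper exhibits $J_f$ as the ideal of $2\times 2$ minors of a $3\times2$ matrix and invokes Hilbert--Burch via Lemma \ref{commut}, while you feed the same two syzygies into Saito's criterion (Lemma \ref{Saito-crit}); both hinge on the single computation that $f_y$ splits as $x^{\alpha-1}P+y^{\alpha-1}Q$, and (v) is Lemma \ref{regder} either way. Your item (iii) is correct and genuinely different from the paper: where the paper quotes a computation (for $\alpha=2$, $\beta\geq3$) and reads non-linearity off the explicit Rees presentation obtained in (ii) (for $\beta=2$), you argue uniformly that all entries of both syzygies lie in $(x,y)$ exactly when $\beta(\alpha-1)>\alpha$, so that $V(x,y)\times\mathbb{A}^{3}$ is a $4$-dimensional closed subset of $V(\ell_1,\ell_2)$ which cannot lie inside the irreducible $4$-dimensional Rees component (the latter dominates ${\rm Spec}\,R$, the former does not); hence ${\rm Sym}_RJ_f\neq\mathscr{R}(J_f)$. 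This component argument is cleaner and computation-free, and your Hessian-rank route to $\mathscr{F}(J_f)$ being a polynomial ring is sound (Lemma \ref{rank}, plus the fact that a $3$-generated graded domain of dimension $3$ is a polynomial ring).

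There remain two genuine gaps. First, item (ii) is a plan, not a proof, and what you defer is precisely where the work of the paper lies. For $\beta=2$, $\alpha\geq3$ the paper: (a) produces the extra non-linear Rees equation $xyst-uH$, with $H=(\alpha-1)xzs+(\alpha^2y^2+\alpha(\alpha-1)z^2)u-\alpha yzt$, by a Cramer's-rule elimination between the two linear symmetric-algebra equations; (b) observes that the candidate ideal $\mathcal{P}=(sy^{\alpha-1}+\alpha ux^{\alpha-1},\,y^{\alpha-2}H+\alpha x^{\alpha}t,\,xyst-uH)$ is the ideal of $2\times2$ minors of an explicit $3\times2$ matrix, hence perfect of height $2$ by Hilbert--Burch; and (c) proves $\mathcal{P}$ is prime by showing $x$ is a nonzerodivisor modulo $\mathcal{P}$ and localizing at $x$, so that $\mathcal{P}$ equals the Rees ideal by comparing heights, giving Cohen--Macaulayness. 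None of (a)--(c) is routine, your sketch supplies none of them, and it also leaves the isolated case $(\alpha,\beta)=(2,3)$ (settled in the paper by direct computation) untouched.

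Second, the even-$\beta$ half of (iv) is a step that fails --- you flagged the right spot but predicted the wrong outcome. Conjugate pairs do recombine over the reals: for any root $\omega$ of $t^{\beta}+1$,
$$(h-\omega y^{\alpha})(h-\bar{\omega}y^{\alpha}) \, = \, h^{2}-2\,{\rm Re}(\omega)\,hy^{\alpha}+y^{2\alpha}\ \in\ \mathbb{R}[x,y,z],$$
so for even $\beta\geq4$ your factorization regroups into $\beta/2$ real factors and $f$ is reducible over $\mathbb{R}$; concretely, $h^{4}+y^{4\alpha}=(h^{2}-\sqrt{2}\,hy^{\alpha}+y^{2\alpha})(h^{2}+\sqrt{2}\,hy^{\alpha}+y^{2\alpha})$. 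If moreover $\beta$ is even with an odd divisor, $f$ is reducible over every $k$, e.g. $h^{6}+y^{6\alpha}=(h^{2}+y^{2\alpha})(h^{4}-h^{2}y^{2\alpha}+y^{4\alpha})$. What your method actually proves is: $f$ is irreducible over $k$ if and only if $t^{\beta}+1$ is irreducible over $k$ (Galois-stable groupings of your linear factors correspond to factors of $t^{\beta}+1$ over $k$); over $\mathbb{R}$ this holds only for $\beta=2$. So the ``delicate descent'' cannot be completed for $\beta\geq4$ --- it terminates in a counterexample. Be aware that the paper fares no better here: its one-line justification rests on the two-factor complex factorization $f=(iA^{m}+y^{m\alpha})(-iA^{m}+y^{m\alpha})$, which settles real irreducibility only when those two factors are themselves irreducible, i.e. only when $\beta=2$; the irreducibility claim in Theorem \ref{last-family}(iv) is false as stated for even $\beta\geq4$.
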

\demo (i) We have {\small$$f_x=\alpha\beta x^{\alpha-1}(x^{\alpha}-y^{\alpha-1}z)^{\beta-1}, \quad f_y=\alpha\beta y^{\alpha\beta -1}-(\alpha-1)\beta y^{\alpha-2}z(x^{\alpha}-y^{\alpha-1}z)^{\beta-1},\,f_z=-\beta y^{\alpha-1}(x^{\alpha}-y^{\alpha-1}z)^{\beta-1}$$}
Note that we can write $f_x,\,f_y$ and $f_z$ as 
$$f_x=\alpha x^{\alpha-1}G,\quad f_y=x^{\alpha-1}P+y^{\alpha-1}Q,\quad f_{z}=-y^{\alpha-1}G$$
for certain $G,\,P,\,Q\in R$. Thus, \begin{equation}\label{minorsmatrix}J_f=I_2\left[\begin{array}{cc}y^{\alpha-1}& -\alpha^{-1}P\\0&G\\\alpha x^{\alpha-1}&Q\end{array}\right].\end{equation} In particular, since $J_f$ has codimension two, it follows by the Hilbert-Burch theorem that $J_f$ is a perfect ideal. By Lemma \ref{commut}, $f$ is a free divisor.

\medskip

(ii) In the specific cases $(\alpha, \beta)=(2,2)$ and  $(\alpha, \beta)=(2,3)$, the Cohen-Macaulayness of $\mathscr{R}(J_f)$ can be confirmed by a routine computation. Therefore, we may suppose $\alpha\geq 3$ and $\beta=2$. Determining $G, P, Q$ in this situation, we obtain from (\ref{minorsmatrix}) that a syzygy matrix of $J_f$ is
$$\phi=\left[\begin{array}{ccc}
y^{\alpha-1}&(\alpha-1)xy^{\alpha-2}z\\
0&\alpha(x^{\alpha}-y^{\alpha-1}z)\\
\alpha x^{\alpha-1}&\alpha^2y^{\alpha}+\alpha(\alpha-1)y^{\alpha-2}z^2
\end{array}\right].$$
Let us denote by ${\mathcal Q}$ the (prime) ideal of $k[x,y,z,s,t,u]=R[s, t, u]$ defining $\mathscr{R}(J_f)$. Notice that
\begin{equation}\label{eq_sym}
I_1\left(\left[\begin{array}{ccc}s&t&u\end{array}\right]\cdot \phi\right)=(sy^{\alpha-1}+\alpha ux^{\alpha-1},y^{\alpha-2}H+\alpha x^{\alpha}t)\subset {\mathcal Q},
\end{equation}
where $H:=(\alpha-1)xzs+(\alpha^2y^2+\alpha(\alpha-1)z^2)u-\alpha yzt$. Clearly, we can rewrite $I_1([\begin{array}{ccc}s&t&u\end{array}]\cdot \phi)$ as 
$$I_1\left(\left[\begin{array}{cc}y^{\alpha-2}&x^{\alpha-2}\end{array}\right]\cdot\left[\begin{array}{cc}sy&H\\\alpha ux&\alpha x^2t\end{array}\right] \right).$$
Now it follows from Cramer's rule that 
\begin{equation}\label{sylvester}
\det \left[\begin{array}{cc}sy&H\\\alpha ux&\alpha x^2t\end{array}\right]=\alpha x^2yst-\alpha uxH=\alpha x( xyst- uH)\in {\mathcal Q}.
\end{equation}
From \eqref{eq_sym} and \eqref{sylvester} we deduce an inclusion
$$\mathcal{P}:=(sy^{\alpha-1}+\alpha ux^{\alpha-1},\, y^{\alpha-2}H+\alpha x^{\alpha}t,\, xyst- uH)\subset {\mathcal Q}.$$

\medskip

\noindent {\bf Claim 1.} {\it ${\mathcal P}$ is a perfect ideal of height $2.$}

\medskip
Clearly, $\hht {\mathcal P}\geq 2$ and 
$${\mathcal P}=
I_2\left[\begin{array}{cc}
H& -xt\\
-ys&u\\
\alpha x^{\alpha-1}& y^{\alpha-2}
\end{array}\right]$$
Now, Claim 1 follows by the Hilbert-Burch theorem.

\medskip

\noindent {\bf Claim 2.} ${\mathcal P}={\mathcal Q}$.

\medskip

Since ${\mathcal P}\subseteq {\mathcal Q}$ and $\hht {\mathcal P} =\hht {\mathcal Q}=2$, it suffices to prove that the ideal ${\mathcal P}$ is prime. Note first that $x$ is regular modulo ${\mathcal P}$. To show this, suppose otherwise. Then we would have $x\in \mathfrak{p} $ for some associated prime $\mathfrak{p}$ of $R[s,t,u]/{\mathcal P}$. In particular, $(x,sy^{\alpha-1},y^{\alpha-2}H,uH)\subset \mathfrak{p}$. Using the explicit format of $H$ given above, it is easy to see that $$\hht(x, sy^{\alpha-1}, y^{\alpha-2}H, uH) \, \geq \, 3.$$ In particular, $\hht \mathfrak{p}\geq 3.$ But this is a contradiction because, by Claim 1, $\mathcal{P}$ is perfect of height 2.

Now, by inverting the element $x$ we get
\begin{eqnarray}
{\mathcal D}\, := \, \frac{k[x,y,z,s,t,u][x^{-1}]}{{\mathcal P}k[x,y,z,s,t,u][x^{-1}]}&=&\frac{k[x,y,z,s,t,u][x^{-1}]}{(u+\alpha^{-1} x^{1-\alpha}y^{\alpha-1}s,xt+\alpha^{-1} x^{1-\alpha}y^{\alpha-2}H,xyst-uH)}\nonumber\\
&=&\frac{k[x,y,z,s,t,u][x^{-1}]}{(u+\alpha^{-1} x^{1-\alpha}y^{\alpha-1}s,xt+\alpha^{-1} x^{1-\alpha}y^{\alpha-2}H)}\nonumber\\
&\cong&\frac{k[x,y,z,s,t][x^{-1}]}{(at+bs)}\nonumber\\
&\cong &\frac{k[x,y,z,x^{-1}][s,t]}{(at+bs)}\nonumber
\end{eqnarray}
where $a:=x(1- x^{-\alpha}y^{\alpha-1}z)\quad \mbox{and}\quad b:=x^{1-\alpha}y^{\alpha - 2}[(\alpha-1)za+x^{1-\alpha}y^{\alpha+1}]$ are elements in the coefficient ring $k[x,y,z,x^{-1}].$ Since $a$ and $b$ are easily seen to be relatively prime in this factorial domain, the element $at+bs$ must be irreducible in $k[x,y,z,x^{-1}][s,t]$, so that the quotient  $k[x,y,z,x^{-1}][s,t]/(at+bs)\cong {\mathcal D}$ is a domain. This means (as $x$ is regular modulo ${\mathcal P}$) that  $R[s,t,u]/{\mathcal P}$ is a domain, as needed.

\smallskip

Finally, by Claim 1 and Claim 2, we conclude that $\mathscr{R}(J_f)$ is Cohen-Macaulay. 

\medskip

(iii) First, if $\alpha =2$, a simple inspection shows that the linear type property of $J_f$ fails in case $\beta \geq 3$ (and holds if $\beta =2$), by analyzing the saturation of the ideal $\mathscr{S}$ of 2 linear forms defining ${\rm Sym}_RJ_f$ in $R[s, t, u]$ by the ideal $J_f$. The resulting ideal -- which thus defines $\mathscr{R}(J_f)$ (see, e.g., \cite[Lemma 2.11]{Cleto}) -- turns out to strictly contain $\mathscr{S}$. In addition, it is contained in $(x, y, z)R[s, t, u]$, so that $\mathscr{F}(J_f)  \cong k[s, t, u]$. As to the case where $\alpha \geq 3$ and $\beta =2$, we can use a previous calculation. Precisely, by the structure of the defining ideal $\mathcal{Q}=\mathcal{P}\subset k[x, y, z, s, t, u]$ of $\mathscr{R}(J_f)$ as obtained in item (ii), we readily get that $J_f$ is not of linear type. Moreover, by looking at the non-linear Rees equation $$xyst-uH \, \in \, (x, y, z)R[s, t, u]$$ we conclude that, once again, $\mathscr{F}(J_f)  \cong  k[s, t, u]$. In either case, $\ell(J_f)=3$ and (by Lemma \ref{r-reg}) ${\rm r}(J_f)=0$. 

\medskip

(iv) Let $k=\mathbb{C}$ and assume first that $\beta = 2m$,\, $m\geq 1$. We have
$f  = (x^{\alpha}-y^{\alpha -1}z)^{2m}+y^{2m\alpha}$
and hence, for $i=\sqrt{-1}$ and $A:=x^{\alpha}-y^{\alpha -1}z$,
\begin{equation}\label{factors}
f \, = \, (iA^m + y^{m\alpha})(-iA^m + y^{m\alpha}).
\end{equation}
Now, assume $\beta \geq 3$ is odd. Write
$f= (x^{\alpha}-y^{\alpha -1}z+y^{\alpha}-y^{\alpha})^{\beta}+y^{\alpha \beta}$ and set $B :=  x^{\alpha}-y^{\alpha -1}z+y^{\alpha}$. Thus we can rewrite
$$f \, = \, (B-y^{\alpha})^{\beta}+y^{\alpha \beta} \, = \,
[Bg+(-1)^{\beta}y^{\alpha \beta}]+y^{\alpha \beta}\, = \, Bg$$ for a suitable $g:=g_{\alpha, \beta}\in R$ of degree $\alpha(\beta -1)$. Of course, this also shows that if $k\subseteq \mathbb{R}$ and $\beta$ is odd, then $f$ is reducible over $k$. 

Finally, if $k\subseteq \mathbb{R}$ then the irreducibility of $f$ over $k$ for even $\beta$ follows from the structure of the factors described in (\ref{factors}) over the unique factorization domain ${\mathbb C}[x, y, z]$.

\medskip

(v) According to item (i), $f$ is a free divisor. Noticing that its degree is $\alpha \beta$, the assertion follows by Lemma \ref{regder}.\qed

\begin{Remark}\rm Computations strongly suggest that the cases described in item (ii) are precisely the ones where the Cohen-Macaulayness of $\mathscr{R}(J_f)$ takes place. Concerning the linear type property of $J_f$, computations also indicate that $J_f$ is not of linear type if $\alpha, \beta \geq 3$ -- cases not covered by part (iii). The (partially computer-assisted) conclusion is that $J_f$ is of linear type if and only if $\alpha = \beta =2$. 

\end{Remark}

\begin{Remark}\label{secondfam-g} \rm Here we want to point out that the form $g=g_{\alpha, \beta}\in R_{\alpha(\beta-1)}$ defined in the proof of item (iv) is also a free divisor provided that $\beta \geq 3$ is odd. First, note that $g$ is defined by means of $Bg=(B-y^{\alpha})^{\beta}+y^{\alpha\beta},$ where $B=x^{\alpha}-y^{\alpha-1}z+y^{\alpha}$. Explicitly, from $$Bg=\displaystyle(B-y^{\alpha})^{\beta}+y^{\alpha\beta}= \sum_{j=0}^{\beta}\binom{\beta}{j}B^{\beta-j}(-y^{\alpha})^j+y^{\alpha\beta}=B\cdot \left(\sum_{j=0}^{\beta-1}(-1)^j\binom{\beta}{j}B^{\beta-j-1}y^{\alpha j}\right)$$ we get $\displaystyle g=\sum_{j=0}^{\beta-1}(-1)^j\binom{\beta}{j}B^{\beta-j-1}y^{\alpha j}$. An elementary calculation shows that we can write $g_x,\,g_y$, $g_z$ as $g_x=\alpha x^{\alpha-1}T$, $g_y=x^{\alpha-1}U+y^{\alpha-1}V$, $g_{z}=y^{\alpha-1}T$, for certain $T,\,U,\,V\in R$. Thus, $$J_g=I_2\left[\begin{array}{cc}y^{\alpha-1}& -\alpha^{-1}U\\0&T\\\alpha x^{\alpha-1}&V\end{array}\right].$$
Since $\hht J_g=2$, the ideal $J_g$ must be perfect by the Hilbert-Burch theorem. Therefore, $g$ is a free divisor whenever $\beta \geq 3$ is odd. In particular, by Lemma \ref{regder} we have ${\rm reg}\,{\rm Der}_k(R/(g))=\alpha \beta - \alpha -2$. 

We also observe that, for every odd $\beta \geq 3$, the form $g$ is reducible over $k={\mathbb C}$. Indeed, let
$$\Phi =\sum_{j=0}^{\beta-1}(-1)^j\binom{\beta}{j}Z^{\beta-j-1}W^{j}\in {\mathbb C}[Z, W],$$ which then factors as a product of linear forms in $Z$ and $W$. Now let $\sigma \colon {\mathbb C}[Z, W]\rightarrow {\mathbb C}[x, y, z]$  be the homomorphism given by $Z\mapsto B$ and $W\mapsto y^{\alpha}$. Then, the form $\sigma (\Phi)=g$ is reducible in ${\mathbb C}[x, y, z]$.
	 
Finally, let us study the algebra $\mathscr{R}(J_{g_{\alpha, \beta}})$ in the cases  $\beta =3$ and $\beta=5$. 

If $\beta =3$ then first as a matter of illustration we explicitly have \begin{eqnarray}
	 g_{\alpha, 3}=B^{2}-3By^{\alpha}+3y^{2\alpha }&=&(B-y^{\alpha})^2-y^{\alpha}(B-2y^{\alpha})=(B-y^{\alpha})^2-y^{\alpha}(B-y^{\alpha})+y^{2\alpha}\nonumber\\
	 &=&(x^{\alpha}-y^{\alpha-1}z+y^{\alpha})(x^{\alpha}-y^{\alpha-1}z-2y^{\alpha})+3y^{2\alpha }\nonumber\\
	 &=&x^{2\alpha}-2x^{\alpha}y^{\alpha-1}z-x^{\alpha}y^{\alpha}+y^{2\alpha-2}z^2+y^{2\alpha-1}z+y^{2\alpha}.\nonumber
	 \end{eqnarray}
	 Computations show that, for all $\alpha \geq 2$, the ring $\mathscr{R}(J_{g_{\alpha, 3}})$ is Cohen-Macaulay, ${\rm r}(J_{g_{\alpha, 3}})=0$, but $J_f$ is of linear type if and only if $\alpha =2$; more precisely, if $\mathcal{L}$ (resp. $\mathcal{Q}$) defines ${\rm Sym}_RJ_{g_{\alpha, 3}}$ (resp. $\mathscr{R}(J_{g_{\alpha, 3}})$) in the polynomial ring $R[s, t, u]$, then we have found the relation 
	 $$\mathcal{L}: \mathcal{Q}= (x^{\alpha -1}, y^{\alpha -2})R[s, t, u].$$ 
	 
	 If $\beta =5$, then in the cases $\alpha =2$ and $\alpha =3$ we have confirmed that ${\rm depth}\,\mathscr{R}(J_{g_{\alpha, 5}})=3$, i.e., the ring $\mathscr{R}(J_{g_{\alpha, 5}})$ is {\it almost} Cohen-Macaulay in the sense that its depth is 1 less than its dimension. Also, we have ${\rm r}(J_{g_{\alpha, 5}})=0$. We strongly believe such properties hold for $\alpha \geq 4$ as well. 
	 
\end{Remark}

\begin{Question}\label{irred-g}\rm Let $\beta \geq 7$ be odd. Is $\mathscr{R}(J_{g_{\alpha, \beta}})$ almost Cohen-Macaulay? Is it true that ${\rm r}(J_{g_{\alpha, \beta}})=0$\,? If $k\subseteq \mathbb{R}$ and $\beta \geq 3$ is odd, is $g_{\alpha, \beta}$ irreducible? 
\end{Question}

\section{Fourth family: linear free plane curves}\label{fourth}

In this section, we let $R=k[x,y,z]$ and our objective is to exhibit our fourth family of free divisors, which as we shall prove have the linearity property as in two of the previous families. Although in \cite[6.4, p.\,837]{Grangeretal} a classification of linear free divisors in at most 4 variables in the case $k={\mathbb C}$ is given, the approach provided here describes concretely a recipe to detect some linear free divisors in 3 variables starting from a suitable $2\times 3$ matrix $\mathscr{L}$ of linear forms (in fact, from only 3 linear forms, as we shall clarify), where, we recall, $k$ is {\it not} required to be algebraically closed; in regard to this point, it should be mentioned that, even though most of the existing results in the literature are established over ${\mathbb C}$, there has always been an interest in free divisor theory over arbitrary fields (see, e.g., \cite{Terao4} and \cite{TP}).   

We could start focusing on the case where the 6 entries of $\mathscr{L}$ are {\it general} linear forms, but there is in fact no need for this setting as we only suppose the forms in the first row to be linearly independent over $k$ and, naturally, the rank of the matrix to be $2$. Thus, after eventually a linear change of variables, we let
$$\mathscr{L} \, = \, \mathscr{L}(L_1, L_2, L_3) \, := \, \left[\begin{array}{ccc}
	x&y&z\\
    L_1&L_2&L_3
\end{array}\right],$$ for linear forms $$L_1 \, = \, a_1x+a_2y+a_3z, \, \, \, L_2 \, = \, a_4x+a_5y+a_6z, \, \, \, L_3 \, = \, a_7x+a_8y+a_9z,$$ where at least one of the $2\times 2$ minors $$Q_1 \, = \, xL_2-yL_1, \, \, \, Q_2 \, = \, xL_3-zL_1, \, \, \, Q_3 \, = \, yL_3-zL_2$$ does not vanish. We also consider the Jacobian matrix of ${\bf Q}=\{Q_1, Q_2, Q_3\}$,
$$\Theta = \Theta({\bf Q})=\left[\begin{array}{ccc}
	{Q_1}_x&{Q_2}_x&{Q_3}_x\\
	{Q_1}_y&{Q_2}_y&{Q_3}_y\\
	{Q_1}_z&{Q_2}_z&{Q_3}_z
\end{array}\right]=\left[\begin{array}{ccc}
L_2-a_4x-a_1y&L_3+a_7x-a_1z&a_7y-a_4z\\
a_5x-L_1-a_2y&a_8x-a_2z&L_3-a_8y-a_5z\\
a_6x-a_3y&a_9x-L_1-a_3z&a_9y-L_2-a_6z
\end{array}\right].$$

Our result in this section is as follows.

\begin{Theorem}\label{fourth-f}
Maintain the above notations. If the cubic $f:=\mathrm{det}\,\Theta$ is non-zero and reduced, then $f$ is a linear free divisor. Precisely, a free basis of $T_{R/k}(f)$ is $\{\theta_1, \theta_2, \varepsilon_3\}$, where $\varepsilon_3$ is the Euler derivation, $\theta_2=L_1\frac{\partial}{\partial x}+L_2\frac{\partial}{\partial y}+L_3\frac{\partial}{\partial z}$, and $$\theta_1= (a_1L_1+a_2L_2+a_3L_3)\frac{\partial}{\partial x}+(a_4L_1+a_5L_2+a_6L_3)\frac{\partial}{\partial y}+(a_7L_1+a_8L_2+a_9L_3)\frac{\partial}{\partial z}.$$
\end{Theorem}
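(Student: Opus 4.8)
The plan is to invoke Saito's criterion (Lemma \ref{Saito-crit}) with the three proposed vector fields. Write $A=(a_{ij})$ for the $3\times 3$ matrix whose rows are the coefficient vectors of $L_1,L_2,L_3$, so that $(L_1,L_2,L_3)^t=A\,\mathbf{x}$ with $\mathbf{x}=(x,y,z)^t$. A direct inspection shows that $Q_1,Q_2,Q_3$ are, up to reordering and signs, exactly the components of the cross product $\mathbf{x}\times A\mathbf{x}$; consequently $\Theta$ coincides, after a signed permutation of its rows, with the Jacobian of the map $\mathbf{Q}\colon\mathbf{x}\mapsto\mathbf{x}\times A\mathbf{x}$, so $f=\det\Theta$ is a nonzero scalar multiple of $\det(\mathrm{Jac}\,\mathbf{Q})$. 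Under this identification the coefficient vectors of $\varepsilon_3$, $\theta_2$ and $\theta_1$ are precisely $\mathbf{x}$, $A\mathbf{x}$ and $A^2\mathbf{x}$ (the last because $a_1L_1+a_2L_2+a_3L_3$, etc., is the corresponding row of $A$ applied to $A\mathbf{x}=(L_1,L_2,L_3)^t$). Thus all three derivations have the form $\theta_B=(B\mathbf{x})\cdot\nabla$ with $B\in\{I,A,A^2\}$, each commuting with $A$.

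First I would show that every such $\theta_B$ with $BA=AB$ is logarithmic. The standard transformation rule $(P\mathbf{u})\times(P\mathbf{v})=\det(P)\,(P^{-1})^t(\mathbf{u}\times\mathbf{v})$, applied with $P$ commuting with $A$, gives $\mathbf{Q}(P\mathbf{x})=\det(P)\,(P^{-1})^t\mathbf{Q}(\mathbf{x})$; differentiating this equivariance and taking determinants yields $f(P\mathbf{x})=\det(P)\,f(\mathbf{x})$, so $f$ is a relative invariant with character $\det$ for the group of matrices commuting with $A$. Setting $P=I+tB$ and differentiating at $t=0$ gives $\theta_B(f)=({\rm tr}\,B)\,f\in(f)$; in particular $\varepsilon_3(f)=3f$ (Euler), $\theta_2(f)=({\rm tr}\,A)f$ and $\theta_1(f)=({\rm tr}\,A^2)f$, so all three lie in $T_{R/k}(f)$.

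The core of the argument is the Saito determinant $\det[\theta_j(x_i)]=\det[A^2\mathbf{x}\,|\,A\mathbf{x}\,|\,\mathbf{x}]$. The key tool is the product rule $\Theta\mathbf{v}=\mathbf{v}\times A\mathbf{x}+\mathbf{x}\times A\mathbf{v}$, which yields $\Theta\mathbf{x}=2(\mathbf{x}\times A\mathbf{x})$, $\Theta A\mathbf{x}=\mathbf{x}\times A^2\mathbf{x}$ and $\Theta A^2\mathbf{x}=\mathbf{x}\times A^3\mathbf{x}-(A\mathbf{x}\times A^2\mathbf{x})$. Writing $N=[\mathbf{x}\,|\,A\mathbf{x}\,|\,A^2\mathbf{x}]$ and using Cayley--Hamilton to reduce $\mathbf{x}\times A^3\mathbf{x}$ to a combination of $\mathbf{x}\times A\mathbf{x}$ and $\mathbf{x}\times A^2\mathbf{x}$, the columns of $\Theta N$ reduce by elementary column operations to $2(\mathbf{x}\times A\mathbf{x})$, $\mathbf{x}\times A^2\mathbf{x}$ and $-(A\mathbf{x}\times A^2\mathbf{x})$. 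Invoking the classical identity $\det[\,n_2\times n_3\,|\,n_3\times n_1\,|\,n_1\times n_2\,]=(\det N)^2$ for the columns $n_1,n_2,n_3$ of $N$, I obtain $\det(\Theta N)=-2(\det N)^2$, whence $f\cdot\det N=-2(\det N)^2$ and therefore $f=-2\det N$ (cancelling the nonzero polynomial $\det N$ in the domain $R$; note $\det N\neq 0$ as $f\neq 0$). Since $[A^2\mathbf{x}\,|\,A\mathbf{x}\,|\,\mathbf{x}]$ is $N$ with its columns reversed, the Saito determinant equals $-\det N=\pm\tfrac12 f$, a nonzero multiple of $f$, and Lemma \ref{Saito-crit} shows $f$ is a free divisor with free basis $\{\theta_1,\theta_2,\varepsilon_3\}$.

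Finally, since the coefficients of $\theta_1,\theta_2,\varepsilon_3$ are all linear forms, subtracting the scalar multiples $\tfrac{1}{3}({\rm tr}\,A^2)\varepsilon_3$ and $\tfrac{1}{3}({\rm tr}\,A)\varepsilon_3$ from $\theta_1$ and $\theta_2$ (killing the $f$-component via $\varepsilon_3(f)=3f$) produces two genuine syzygies of $J_f$ with linear entries; these form a basis of the free syzygy module and hence the columns of a linear Hilbert--Burch matrix of $J_f$, so $J_f$ is linearly presented and $f$ is a \emph{linear} free divisor. The main obstacle is the Saito determinant computation, i.e.\ identifying $\det[A^2\mathbf{x}|A\mathbf{x}|\mathbf{x}]$ with a scalar multiple of $\det\Theta$: this requires the cross-product product rule, Cayley--Hamilton to control $A^3$, and the quadratic cross-product determinant identity, while keeping careful track of the sign and ordering conventions relating the paper's $Q_i$ to the components of $\mathbf{x}\times A\mathbf{x}$.
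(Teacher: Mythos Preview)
Your argument is correct and genuinely different from the paper's. The paper proceeds by brute force: it exhibits two explicit syzygies $\eta_1,\eta_2$ of $J_f$ (declared ``routine calculations''), performs column operations to reach the matrix $\Phi=[A^2\mathbf{x}\mid A\mathbf{x}\mid \mathbf{x}]$, and checks by expansion that $\det\Phi=\tfrac12 f$. Your route is conceptual: recognising $(Q_1,Q_2,Q_3)$ as a signed permutation of $\mathbf{x}\times A\mathbf{x}$, you obtain logarithmicity of $\theta_B$ for every $B$ commuting with $A$ from the equivariance $f(P\mathbf{x})=\det(P)f(\mathbf{x})$, and you identify the Saito determinant via the product rule for the Jacobian, Cayley--Hamilton, and the cofactor identity $\det[n_2\times n_3\mid n_3\times n_1\mid n_1\times n_2]=(\det N)^2$. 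This explains \emph{why} the three fields $A^j\mathbf{x}\cdot\nabla$ form a Saito basis (they span the Lie algebra of the centralizer of $A$, for which $f$ is a relative invariant), and it generalises immediately: any $\theta_B$ with $[A,B]=0$ is logarithmic.

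There is one small gap. From $f\cdot\det N=-2(\det N)^2$ you conclude $f=-2\det N$ by cancelling $\det N$, justifying this with ``$\det N\neq 0$ as $f\neq 0$''. That implication is true but not yet proved at this point; as written it is circular. Two clean fixes: (a) regard the $a_i$ as indeterminates, so that $\det N\not\equiv 0$ in $k[a_1,\dots,a_9,x,y,z]$ and the cancellation yields the universal identity $f=-2\det N$, then specialise; or (b) argue directly that $\det N\equiv 0$ forces the minimal polynomial of $A$ to have degree $\le 2$, say $A^2=cA+dI$, whence $\mathrm{Jac}\cdot(2A-cI)\mathbf{x}=2(\mathbf{x}\times A^2\mathbf{x})-2c(\mathbf{x}\times A\mathbf{x})=0$, so $f=\det\mathrm{Jac}=0$, contradicting the hypothesis. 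Either patch is short; with it, your proof is complete and arguably more illuminating than the paper's computation.
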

\begin{proof} Routine calculations show that $\eta_1$ and $\eta_2$ below are syzygies of $J_f$,
$$\eta_1=\left[\begin{array}{c}
	(2a_1-a_5-a_9)x+3a_2y+3a_3z\\
	 3a_4x+(2a_5-a_1-a_9)y+3a_6z\\
 3a_7x+3a_8y+(2a_9-a_1-a_5)z
\end{array}\right]=\left[\begin{array}{c}
3L_1 - (a_1+a_5+a_9)x\\
3L_2-(a_1+a_5+a_9)y\\
3L_3-(a_1+a_5+a_9)z
\end{array}\right]_{3\times 1}$$
and $\eta_2$ being the $3\times 1$ column-matrix given by
$$\left[\begin{array}{c}
		(-3a_5-3a_9)L_1+6a_2L_2+6a_3L_3+(-4a_6a_8-({a_5}-{a_9})^2-4a_3a_7-4a_2a_4+3a_1(a_5+a_9))x\\
		6a_4L_1+(-6a_1+3a_5-3a_9)L_2+6a_6L_3+(-4a_6a_8-({a_5}-{a_9})^2-4a_3a_7-4a_2a_4+3a_1(a_5+a_9))y\\
		6a_7L_1+6a_8L_2+(-6a_1-3a_5+3a_9)L_3+(-4a_6a_8-({a_5}-{a_9})^2-4a_3a_7-4a_2a_4+3a_1(a_5+a_9))z	
	\end{array}\right].$$
Now let $$A:=a_1+a_5+a_9, \quad B:=-4a_6a_8-({a_5}-{a_9})^2-4a_3a_7-4a_2a_4+3a_1(a_5+a_9),$$ so that the following is a submatrix of the matrix of syzygies of $J_f$:
$$\left[\begin{array}{cc}
	(-3a_5-3a_9)L_1+6a_2L_2+6a_3L_3+B x&	3L_1 - Ax\\
	6a_4L_1+(-6a_1+3a_5-3a_9)L_2+6a_6L_3+B y&3L_2- Ay\\ 
	6a_7L_1+6a_8L_2+(-6a_1-3a_5+3a_9)L_3+B z&3L_3- Az	
\end{array}\right]_{3\times 2}.$$
Multiplying the second column by $C:=a_1+A=2a_1+a_5+a_9$ and adding it to the first column, we obtain an equivalent matrix
$$\varphi = \left[\begin{array}{cc}
	6(a_1L_1+a_2L_2+a_3L_3)+(B - AC)x&	3L_1 - Ax\\
	6(a_4L_1+a_5L_2+a_6L_3)+(B - AC) y&3L_2- Ay\\
	6(a_7L_1+a_8L_2+a_9L_3)+(B - AC) z&3L_3- Az	
\end{array}\right]_{3\times 2}.$$
Finally, attaching to $\varphi$ a third column corresponding to the Euler derivation, the resulting $3\times 3$ matrix is easily seen to be equivalent to
$$\Phi =\left[\begin{array}{ccc}
	a_1L_1+a_2L_2+a_3L_3&	L_1 &x\\
	a_4L_1+a_5L_2+a_6L_3&L_2&y\\
	a_7L_1+a_8L_2+a_9L_3&L_3&z	
\end{array}\right]_{3\times 3}$$
and satisfies $\mathrm{det}\,\Phi=\frac{1}{2}f$. Now the proposed assertions follow by Lemma \ref{Saito-crit}.
\end{proof}

\medskip

Numerous comments are in order.

\begin{Remark}\rm (a) Recall that, by definition, being reduced is a necessary condition for a polynomial to be free. Now we point out that, in general, it is possible for the cubic $f:={\rm det}\,\Theta$ (with $\Theta$ as defined above) to be non-reduced. For instance, if we start with the matrix $\mathscr{L}(x-y, x+y+z, y+z)$,
then $f=-2(x+z)^3$.

\medskip

\noindent (b) Concerning the condition $f\neq 0$, it means (since ${\rm char}\,k=0$) that the quadrics $Q_1, Q_2, Q_3$ are algebraically independent, hence linearly independent. Clearly, this may not occur; for example, for
$\mathscr{L}(y, x, z)$ we have $f=0$ because $Q_3=-Q_2$.

\medskip

\noindent (c) We remark that any $f$ as in Theorem \ref{fourth-f} is necessarily reducible at least if $k={\mathbb C}$. This follows by the fact that a complex irreducible free divisor in 3 variables must have degree at least 5 (see \cite[Theorem 2.8]{D-S}).
    
\medskip

\noindent (d) A linear free divisor $f$ in our fourth family can have an irreducible quadratic factor, at least over $k={\mathbb R}$ (or eventually a suitable finite field extension of ${\mathbb Q}$). Indeed, starting for example with the matrix
$\mathscr{L}(0, x+z, y+z)$, we obtain $$f \, = \, -2xq \, := \, -2x(x^2+xy-y^2+3xz-yz+z^2).$$ Forcing $q$ to be the product of two linear forms with real coefficients yields a contradiction, hence $q$ is irreducible over ${\mathbb R}$. However, it should be pointed out that $q$ is reducible over ${\mathbb C}$, as the rank of its associated matrix is non-maximal. In fact we believe (but have no proof) that if $k={\mathbb C}$ then a free cubic $f$ as in Theorem \ref{fourth-f} must necessarily be a product of linear forms. This would imply that the complex linear free divisor $z(xz+y^2)$ does not belong to our fourth family, which we have been unable to prove.

\medskip

\noindent (e) Our method does not work for higher degrees in general. Taking for example any of the matrices
$$\left[\begin{array}{ccc}
	x^2&y&z\\
    y^2&z&x
\end{array}\right], \quad \left[\begin{array}{ccc}
	x&y&z\\
    z^2&x^2&y^2
\end{array}\right], \quad \left[\begin{array}{ccc}
	x^2&y^2&z^2\\
    z^2&x^2&y^2
\end{array}\right],$$ 
we are led (following the same recipe) to polynomials that are not free as their Jacobian ideals fail to be perfect. However, an interesting problem remains as to the possibility of producing free divisors by means of a similar technique, but with carefully chosen entries of higher degrees.

\medskip

\noindent (f) Our method does not work for higher dimensions in general. For example, over the ring $k[x, y, z, w]$, consider the matrix
$$\left[\begin{array}{cccc}
	x&y&z&w\\
    x-y&x+w&y-z&x+3y\\
    2y-z&3w&x-w&y+2w
    \end{array}\right].$$ The maximal minors are 4 cubics whose Jacobian matrix has a reduced determinant $g\neq 0$. A computation shows that $J_g$ is not perfect, so that $g$ is not free.

\end{Remark}

We also derive some additional features.

\begin{Proposition}\label{feat-fourth} Let $f$ be as in Theorem \ref{fourth-f}. The following assertions hold:
	\begin{enumerate}
		\item[{\rm (i)}] $J_f$ is of linear type. In particular, ${\rm r}(J_f)=0$;
		\item[{\rm(ii)}] $\mathscr{R}(J_f)$ {\rm (}$\cong {\rm Sym}_RJ_f${\rm )} is a complete intersection;
        \item[(iii)] ${\rm reg}\,{\rm Der}_k(R/(f))=1$.
	\end{enumerate}
\end{Proposition}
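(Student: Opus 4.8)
The three items sit at very different depths, so I would dispatch the easy ones first and concentrate on (i), which carries all the content.

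Item (iii) is immediate from the theory already in place: by Theorem~\ref{fourth-f}, $f$ is a linear free divisor in $n=3$ variables, equivalently a free divisor whose degree is $d=3$ (note $f=\det\Theta$ is a cubic). Hence Lemma~\ref{regder} applies directly and yields $\mathrm{reg}\,\mathrm{Der}_k(R/(f))=d-2=1$ (equivalently $n-2=1$), and nothing further is required. For item (i), my plan is to mirror the strategy used for the second family in the Remark following Theorem~\ref{K=J}, namely to derive the linear type of $J_f$ from Huneke's criterion \cite[Theorem 1.1]{Hun} by verifying the relevant heights of the ideals of minors of a presentation matrix. By Theorem~\ref{fourth-f} and Lemma~\ref{commut}, $J_f$ is perfect of codimension $2$; since $f$ is a linear free divisor, its minimal free resolution is $0\to R(-3)^2\to R(-2)^3\to J_f\to 0$, so the presentation matrix $\phi$ is a $3\times 2$ matrix of linear forms whose $2\times 2$ minors recover $J_f$ by Hilbert-Burch, giving $\hht I_2(\phi)=\hht J_f=2$. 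In the notation of the family-two Remark, it then remains only to check the single inequality $\hht I_1(\phi)\ge 3$, i.e. that the linear entries of $\phi$ generate the irrelevant ideal $(x,y,z)$.

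This height computation is exactly where I expect the main obstacle to lie, and I would handle it by contradiction. Let $W\subseteq R_1$ be the $k$-span of the entries of $\phi$, so that $\hht I_1(\phi)=\dim_k W$. If $\dim_k W\le 2$, choose linear coordinates so that $W\subseteq\langle x_1,x_2\rangle$; then every entry of $\phi$, and hence every $2\times 2$ minor, is a form in $x_1,x_2$ alone, so by Hilbert-Burch each $f_{x_i}$ lies in $k[x_1,x_2]$. In particular $f_{x_3}\in k[x_1,x_2]$ forces $f=P(x_1,x_2)+x_3\,Q(x_1,x_2)$, and then $f_{x_1},f_{x_2}\in k[x_1,x_2]$ force $Q_{x_1}=Q_{x_2}=0$; as $\deg Q=2>0$ this gives $Q=0$, whence $f=P(x_1,x_2)$ is a cone, contradicting the standing assumption that the partials of $f$ are $k$-linearly independent. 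Therefore $\dim_k W=3$ and $\hht I_1(\phi)=3$. With both height conditions available, \cite[Theorem 1.1]{Hun} yields that $J_f$ is of linear type; the equality $\mathrm{r}(J_f)=0$ then follows at once, since an ideal of linear type can be generated by $\ell(J_f)$ elements (see the discussion in Section~\ref{Prelims}).

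Item (ii) should then follow formally from (i). Being of linear type, $\mathscr{R}(J_f)\cong\mathrm{Sym}_RJ_f=R[T_1,T_2,T_3]/(\ell_1,\ell_2)$, where $\ell_1,\ell_2$ are the two $T$-linear forms read off the columns of $\phi$. Here $R[T_1,T_2,T_3]$ is a $6$-dimensional Cohen-Macaulay ring, while $\dim\mathscr{R}(J_f)=\dim R+1=4$; hence $(\ell_1,\ell_2)$ has height $6-4=2$, equal to its number of generators, so $\ell_1,\ell_2$ form a regular sequence and $\mathscr{R}(J_f)$ is a complete intersection. The only genuinely delicate point in the whole argument is the identity $\hht I_1(\phi)=3$; once it is secured, everything else is either formal or a direct appeal to the results recalled in Section~\ref{Prelims}.
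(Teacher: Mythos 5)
Your proof is correct, and its overall skeleton --- projective dimension $1$ plus Fitting-ideal height conditions for (i), freeness plus Lemma \ref{regder} for (iii) --- matches the paper's, but the two decisive verifications are carried out by genuinely different arguments. For (i), the paper merely asserts that the $G_3$ condition ``follows easily by the structure of $\varphi$'' and invokes \cite[Proposition 4.11]{SUV}; you instead prove the key inequality $\hht I_1(\varphi)\geq 3$ by a coordinate argument: if the entries of $\varphi$ spanned only a $2$-dimensional space of linear forms, then Hilbert--Burch would place all partials of $f$ in $k[x_1,x_2]$, forcing $f$ to be a cone and contradicting the standing hypothesis that the partials are $k$-linearly independent. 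This fills in exactly the step the paper leaves to the reader, and it is robust (it works for any linear free divisor in three variables, not just those produced by Theorem \ref{fourth-f}); your citation of \cite[Theorem 1.1]{Hun} in place of \cite[Proposition 4.11]{SUV} is immaterial, as the paper itself uses Huneke's criterion for the same purpose in the remark following Theorem \ref{K=J}. For (ii) the routes diverge more substantially: the paper shows directly that the two defining forms $\xi_1,\xi_2$ of ${\rm Sym}_RJ_f$ cannot be proportional, since proportional columns of $\varphi$ would force $\det\Phi=0$ against the construction in Theorem \ref{fourth-f}; you instead obtain $\hht(\ell_1,\ell_2)=6-\dim\mathscr{R}(J_f)=2$ from the dimension formula for affine domains together with $\dim\mathscr{R}(J_f)=\dim R+1$, and then conclude by unmixedness. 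Your dimension count is arguably the cleaner deduction, since it bypasses the paper's identification of ``height one'' with ``proportionality'' for the bilinear forms $\xi_1,\xi_2$ (a point that strictly speaking needs a word about common factors), while the paper's argument has the merit of showing explicitly where the nonvanishing of $\det\Phi$ is used. Item (iii) is identical in both treatments.
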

\begin{proof} (i) From the proof of the theorem, the $3\times 2$ matrix $\varphi$ is a minimal presentation matrix of $J_f$. It follows easily by the structure of $\varphi$ -- which in particular has only linear forms as entries -- that the so-called $G_3$ condition is satisfied (see the definition in the next section, right before Example \ref{normal-cross}). Moreover, it is clear that $J_f$ has projective dimension 1 (see also Lemma \ref{commut}). Now, applying \cite[Proposition 4.11]{SUV} we obtain that $J_f$ is of linear type.

\medskip

(ii) By the previous item we have $\mathscr{R}(J_f)\cong {\rm Sym}_RJ_f$, and the latter is the quotient of $R[s, t, u]$ (where $s, t, u$ are variables over $R$) by the ideal generated by 2 linear forms $\xi_1, \xi_2$ in $s, t, u$, which are the entries of the matrix product $[s \, \, \,  t \, \, \, u]\cdot \varphi$. Saying that $\hht (\xi_1, \xi_2)=1$ means precisely $$\xi_2 = \lambda \xi_1, \quad \mbox{for some non-zero} \quad \lambda \in k,$$ which is equivalent to the first column of $\varphi$ being $\lambda$ times the second column. Following the proof of the theorem, this would yield ${\rm det}\,\Phi = 0$, a contradiction. Therefore, $\hht (\xi_1, \xi_2)=2$.

\medskip

(iii) Since $f$ is a free cubic, this follows from Lemma \ref{regder}.
\end{proof}

We close the section with a working example which is, on the other hand, somewhat degenerated in the sense that two of the $L_i$'s are equal.

\begin{Example}\rm Taking
$\mathscr{L}(y, x, x)$ yields the line arrangement $$\frac{1}{2}f \, = \, -x^2y+y^3+x^2z-y^2z \, = \, (x+y)(x-y)(z-y),$$ which is then free by Theorem \ref{fourth-f}. In this case, writing down the syzygy matrix $\varphi$ of $J_f$ as in the proof of the theorem and multiplying their columns by suitable non-zero scalars, we get the following simpler presentation matrix for $J_f$: $$\left[\begin{array}{cc}
	y&x\\
    x&y\\
    x&3y-2z
\end{array}\right].$$ It follows by Proposition \ref{feat-fourth}(ii) that the Rees algebra is the complete intersection ring
$$\mathscr{R}(J_f) \, \cong \, R[s, t, u]/(ys+x(t+u),\,xs + yt +(3y-2z)u).$$
\end{Example}

\section{Maximal analytic spread and an application to homaloidness}\label{maxell}

Consider the standard graded polynomial ring $R=k[x_1,\ldots, x_n]=k\oplus R_+$, $n\geq 3$, and let $f\in R$ be a non-zero reduced homogeneous polynomial of degree $d\geq 3$. Recall that the Jacobian ideal $J_f$ can be minimally generated by the derivatives $f_{x_1}, \ldots, f_{x_n}$ since by convention $f$ is not allowed to be a cone. Moreover, $\hht J_f\geq 2$ as $f$ is reduced.

\subsection{When does the Jacobian ideal have maximal analytic spread?}\label{when-max}

Our goal in this part is to answer this question by means of various characterizations. Recall that $$\hht J_f \, \leq \, \ell (J_f) \, \leq \, n,$$ so here we are specifically interested in the property $\ell (J_f)=n$, which holds if for example $J_f$ is of linear type; indeed, in this case we can write $\mathscr{F}(J_f)={\rm Sym}_RJ_f/R_+{\rm Sym}_RJ_f \cong {\rm Sym}_k(J_f/R_+J_f) \cong  R$ as $k$-algebras.

\begin{Example}\rm The Jacobian ideal of the (non-free) cubic $$f \, = \, xyz+w^3 \, \in \,   k[x, y, z, w]$$ can be shown to be of linear type, and so $\ell(J_f)=4$. 
\end{Example}

On the other hand, as we have seen in Proposition \ref{spread4}(2), even for linear free divisors in at least 5 variables the analytic spread of $J_f$ can be arbitrarily smaller than the number $n$ of variables.

Some of the characterizations to be given here are of cohomological nature, and some rely on the asymptotic behavior of depth.
One of the ingredients is a suitable auxiliary module, which we now introduce. As usual, we denote the gradient vector of a polynomial $g\in R$ by $\nabla g=(g_{x_1}, \ldots, g_{x_n})\in R^n$. Given $f$ as above, we set
$$\mathscr{C}_f \, := \, R^n/\left(\sum_{i=1}^nR\,\nabla f_{x_i}\right).$$
A few preparatory concepts are in order before stating our result.

Let $E$ be a finitely generated module over a Noetherian ring $A$ and let $G\stackrel{\Phi}\rar F\rar E\rar0$ be an $A$-free presentation of $E$. Consider the dual map ${\rm Hom}_A(\Phi, A)\colon F\rightarrow G$. The {\it Auslander transpose} (or {\it Auslander dual}) of $E$ is the $A$-module $${\rm Tr}\,E \, = \, {\rm
coker}\,{\rm Hom}_A(\Phi, A),$$ which is unique up to projective summands. We refer to \cite{AB}.

Now, suppose $A=\bigoplus_{i\geq 0}A_i$ is standard graded over a field $A_0$ and let $A_+=\bigoplus_{i\geq 1}A_i$ be the homogeneous maximal ideal of $A$. Assume that the $A$-module $E$ is graded as well. Then, given an integer $j\geq 0$, the {\it $j$-th local cohomology module} of $E$ is the limit 
$$H_{A_+}^j(E) \, = \, \displaystyle \varinjlim{\rm Ext}_A^{j}(A/A_+^s,\,E).$$

Saying that $E\cong E'$ as graded $A$-modules means, as usual, that there is a degree zero isomorphism between $E$ and $E'$.

Finally, given $r\geq 1$, recall that the Noetherian ring $A$ is said to satisfy (Serre's) condition ${\rm S}_r$ if $${\rm depth}\,A_{\mathfrak{p}} \, \geq \, {\rm min}\,\{r,\,\hht \mathfrak{p}\} \quad \mbox{for all} \quad \mathfrak{p}\in {\rm Spec}\,A.$$ This clearly holds (for all $r$) if $A$ is Cohen-Macaulay.

\smallskip

Back to the polynomial setup, our result here is as follows.

\begin{Theorem}\label{max-spread}  Given $f\in R$ as before, the following assertions are equivalent:
	\begin{enumerate}
		\item[{\rm (i)}] $\ell(J_f)=n$;
		\item[{\rm(ii)}] ${\rm dim}\,\mathscr{C}_f=n-1$;
		
		\item[{\rm(iii)}] $\nabla f_{x_1}, \ldots, \nabla f_{x_n}$ are $R$-linearly independent;
		\item[{\rm(iv)}] ${\rm Ext}^1_R({\mathscr C}_f, R)\cong {\mathscr C}_f(d-2)$ as graded $R$-modules; 
	\item[{\rm(v)}] $H_{R_+}^{n-1}({\mathscr C}_f)\cong {\rm Hom}_R({\mathscr C}_f, k)(n-d+2)$  as graded $R$-modules;
	
	\item[{\rm(vi)}] ${\rm depth}\,R/\overline{J_f^{m}}=0$ for some $m \geq 1$, where the bar denotes integral closure;
	
	\item[{\rm(vii)}] ${\rm depth}\,R/\overline{J_f^{m}}=0$ for all $m \gg 0$.
	\end{enumerate} Moreover, if  $\mathscr{R}(J_f)$ satisfies the 
	${\rm S}_2$ condition, then these assertions are also equivalent to the following ones:
	\begin{enumerate}
		\item[{\rm(viii)}] ${\rm depth}\,R/J_f^{m}=0$ for all $m \gg 0$;
		\item[{\rm(ix)}] ${\rm depth}\,R/J_f^{m}=0$ for some $m \geq 1$.
		\end{enumerate}
    \end{Theorem}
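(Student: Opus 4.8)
The plan is to establish the equivalences in three clusters, moving from the most elementary linear-algebraic conditions outward to the cohomological and asymptotic-depth characterizations. The central object is the auxiliary module $\mathscr{C}_f = R^n/(\sum_i R\,\nabla f_{x_i})$, whose presentation matrix is precisely the Hessian matrix $\Theta$ of $f$ (the rows being the gradients $\nabla f_{x_i}$). First I would dispatch the easy block (i)$\Leftrightarrow$(ii)$\Leftrightarrow$(iii). By Lemma \ref{rank}, $\ell(J_f)=\operatorname{rank}\Theta$ since the partials $f_{x_i}$ are homogeneous of the same degree $d-1$; and the rank of the presentation matrix of $\mathscr{C}_f$ governs $\dim\mathscr{C}_f$ via $\dim\mathscr{C}_f = n - (\text{minimal number of }R\text{-linearly dependent columns corrections})$. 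Concretely, $\mathscr{C}_f$ has rank $n-\operatorname{rank}\Theta$ as an $R$-module, so $\ell(J_f)=n$ forces this rank to be $0$, i.e.\ $\mathscr{C}_f$ is torsion and $\dim\mathscr{C}_f\le n-1$; the reverse and the reformulation (iii) (that $\nabla f_{x_1},\ldots,\nabla f_{x_n}$ are $R$-linearly independent, equivalently $\Theta$ has maximal row rank $n$) are immediate from the definition of rank. One subtlety: $\Theta$ is symmetric, so row rank equals column rank, and this symmetry should be noted since it links the row-independence statement directly to $\operatorname{rank}\Theta$.

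Next I would treat the cohomological cluster (iv) and (v), which is where the symmetry of the Hessian becomes essential. The key observation is that when $\dim\mathscr{C}_f=n-1$, the module $\mathscr{C}_f$ is a torsion $R$-module of codimension $1$, presented by the symmetric matrix $\Theta$ whose entries are forms of degree $d-2$. I would compute the Auslander transpose $\operatorname{Tr}\mathscr{C}_f=\operatorname{coker}\operatorname{Hom}_R(\Theta,R)$; because $\Theta$ is symmetric (up to the degree twist $R(-(d-2))$ coming from the homogeneity of its entries), transposing the presentation matrix reproduces $\mathscr{C}_f$ itself, shifted by $d-2$. Then $\operatorname{Ext}^1_R(\mathscr{C}_f,R)$, which for a codimension-$1$ module computes the relevant piece of the dual, is identified with a twist of $\operatorname{Tr}\mathscr{C}_f$, giving (iv): $\operatorname{Ext}^1_R(\mathscr{C}_f,R)\cong\mathscr{C}_f(d-2)$. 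For (v), I would invoke graded local duality over $R=k[x_1,\ldots,x_n]$: since $\dim\mathscr{C}_f=n-1$, the top nonvanishing local cohomology is $H^{n-1}_{R_+}(\mathscr{C}_f)$, and local duality gives $H^{n-1}_{R_+}(\mathscr{C}_f)\cong\operatorname{Ext}^1_R(\mathscr{C}_f,R)^{\vee}(-n)$ (the canonical module of $R$ being $R(-n)$). Substituting (iv) and taking graded duals, $\mathscr{C}_f(d-2)^\vee\cong\operatorname{Hom}_R(\mathscr{C}_f,k)$ up to the bookkeeping twist, yields the shift $(n-d+2)$ claimed. The main work here is tracking the degree shifts carefully and arguing that the dual direction (v)$\Rightarrow$(i) also holds: nonvanishing of $H^{n-1}_{R_+}(\mathscr{C}_f)$ forces $\dim\mathscr{C}_f=n-1$, reconnecting to (ii).

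Finally I would handle the asymptotic-depth cluster (vi)--(ix). For the integral-closure statements (vi) and (vii), the governing principle is that $\ell(J_f)=n$ is equivalent to $R_+$ being an associated (indeed embedded, or here simply a depth-zero) prime of $R/\overline{J_f^m}$ for large $m$. I would use the standard fact that the analytic spread equals $n$ iff $R_+\in\operatorname{Ass}(R/\overline{J_f^m})$ for $m\gg0$, which follows from the theory of the integral-closure filtration and Burch's inequality relating analytic spread, depth, and the maximal ideal (via the fact that $\ell(I)=n$ detects that the fiber cone has maximal dimension, forcing a depth-zero associated prime at the irrelevant maximal ideal in high powers). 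The equivalence of ``for some $m$'' and ``for all $m\gg0$'' rests on the eventual stability of associated primes of the integral-closure filtration (Ratliff/McAdam-type results). For (viii) and (ix), under the extra hypothesis that $\mathscr{R}(J_f)$ satisfies $\mathrm{S}_2$, the adic and integral-closure filtrations become comparable in the relevant range, so the $\mathrm{S}_2$ condition guarantees that $R/J_f^m$ and $R/\overline{J_f^m}$ share the depth-zero behavior at $R_+$. The hard part will be the asymptotic-depth block: establishing cleanly that maximal analytic spread is detected by a depth-zero associated prime at the irrelevant ideal, and then showing that the $\mathrm{S}_2$ hypothesis on the Rees algebra is exactly what transfers this conclusion from the integral-closure filtration to the ordinary adic filtration. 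I would expect to lean on the characterization of analytic spread through $H^n_{R_+}$ of the associated graded / Rees modules and on known results about asymptotic depth of powers, so the delicate point is assembling these into the stated biconditionals without gaps in the $\mathrm{S}_2$ step.
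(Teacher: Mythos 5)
Your overall skeleton --- the Hessian as presentation matrix of $\mathscr{C}_f$, Lemma~\ref{rank} for (i)$\Leftrightarrow$(iii), symmetry of the Hessian for the transpose identification, graded local duality for (v), and McAdam-type stability for (vi)--(vii) --- is the same as the paper's. But there is a genuine gap where you close the cohomological cycle. Your only route back from (iv)/(v) to (i)/(ii) is the claim that nonvanishing of $H^{n-1}_{R_+}(\mathscr{C}_f)$ forces $\dim\mathscr{C}_f=n-1$. That principle is false: Grothendieck vanishing gives only $\dim\mathscr{C}_f\geq n-1$, and for instance $M=R\oplus R/(x_1)$ has $\dim M=n$ while $H^{n-1}_{R_+}(M)\neq 0$. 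So as written you obtain only the one-way chain (ii)$\Rightarrow$(iv)$\Rightarrow$(v), and the equivalence is not established. What is missing is an argument that (iv) itself forces $\mathscr{C}_f$ to be torsion. The paper supplies this via Auslander--Bridger: the dualized presentation identifies ${\rm Tr}\,\mathscr{C}_f$ with $\mathscr{C}_f(d-2)$ (symmetry again), \cite[Proposition 2.6(a)]{AB} gives $\tau_R(\mathscr{C}_f)\cong{\rm Ext}^1_R({\rm Tr}\,\mathscr{C}_f,R)$, and hence (iv) yields $\mathscr{C}_f^*\cong\tau_R(\mathscr{C}_f)^*=0$, i.e.\ the Hessian map is injective, which is (i). (A shortcut that would also work: ${\rm Ext}^1_R(\mathscr{C}_f,R)$ vanishes at the generic point of ${\rm Spec}\,R$, so (iv) forces $\mathscr{C}_f$ to be torsion.) Relatedly, (iv)$\Leftrightarrow$(v) should be proved as a purely formal consequence of graded local duality plus faithfulness of Matlis duality, with no dimension hypothesis at all; that removes any need for your nonvanishing claim. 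A smaller repair in your first cluster: the rank argument gives only $\dim\mathscr{C}_f\leq n-1$ under (i); equality requires the Fitting-ideal observation that ${\rm Supp}\,\mathscr{C}_f=V(\det\Theta)$, a hypersurface because $\det\Theta$ is then a nonzero form of positive degree $n(d-2)$.

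In the asymptotic-depth block you name the right statements but not the ingredient that proves the one hard implication. The unconditional part is cheaper than you suggest: (i)$\Leftrightarrow$(vii) is exactly \cite[Proposition 4.1]{McAdam}, (vi)$\Leftrightarrow$(vii) follows from the increasing chain of the sets ${\rm Ass}_R\,R/\overline{J_f^m}$, and (vii)$\Rightarrow$(viii) needs no ${\rm S}_2$ at all, only McAdam's inclusion $\overline{\mathscr{A}}^*(J_f)\subset\mathscr{A}^*(J_f)$ of stable sets of asymptotic primes. The ${\rm S}_2$ hypothesis enters in one place only, namely (ix)$\Rightarrow$(i), and your plan there --- ``the adic and integral-closure filtrations become comparable'' --- is not an argument, as you concede. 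The missing result is Ciuperc\v{a}'s theorem on asymptotic primes of ${\rm S}_2$-filtrations: from ${\rm S}_2$ of $\mathscr{R}(J_f)$ one passes to ${\rm S}_2$ of the extended Rees algebra $R[J_ft,t^{-1}]$ by \cite[Remark 2.16]{Ciu}, and then \cite[Proposition 4.1]{Ciu} converts $R_+\in{\rm Ass}_R\,R/J_f^m$ for a single $m$ into $\ell(J_f)=n$. Without this (or an equivalent substitute) the implications (viii)$\Rightarrow$(i) and (ix)$\Rightarrow$(i) remain unproved.
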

\demo Let $H$ be the graded Hessian map of $f$, i.e. the degree zero homomorphism $R^n(-(d-2))\rightarrow R^n$ whose matrix in the canonical bases is the Hessian matrix of $f$. The image of $H$ is the submodule of $R^n$ generated by the homogeneous vectors $\nabla f_{x_1}, \ldots, \nabla f_{x_n}$. Thus, $\mathscr{C}_f$ is the cokernel of $H$, i.e. it has a graded $R$-free presentation 
\begin{equation}\label{H-seq0}
R^n(-(d-2))\stackrel{H}{\longrightarrow} R^n\longrightarrow \mathscr{C}_f\longrightarrow 0.
\end{equation}
Dualizing this sequence, and denoting by $E^*$ (resp. $\phi^*$) the $R$-dual of an $R$-module $E$ (resp. an $R$-module homomorphism $\phi$), we get an exact sequence
\begin{equation}\label{H-seq}
0\longrightarrow \mathscr{C}_f^*\longrightarrow R^n \stackrel{H^*}{\longrightarrow} R^n(d-2)\longrightarrow \mathscr{C}_f(d-2)\longrightarrow 0,
\end{equation}
where we observe that, since the Hessian matrix is symmetric, $H^*=H\otimes {\rm 1}_{R(d-2)}$ so that, indeed, ${\rm coker}\,H^*  =  ({\rm coker}\,H)(d-2)  =  \mathscr{C}_f(d-2)$.

Now, $\ell(J_f)$ is the dimension of the special fiber ring $\mathscr{F}(J_f)=k[f_{x_1}, \ldots, f_{x_n}]$, which by Lemma \ref{rank} can be computed as the rank of the Hessian matrix of $f$. Thus, $$\ell(J_f) \, = \, {\rm rank}\,H \, = \, {\rm rank}\,H^*,$$ and hence $\ell(J_f)=n$ if and only if $H$ is injective (this is of course equivalent to $R^n(-(d-2)) \cong \sum_{i=1}^nR\,\nabla f_{x_i}$ via $H$, which thus proves (i)$\Leftrightarrow$ (iii)), if and only if $H^*$ is injective. The latter property means $\mathscr{C}_f^*=0$. 

Therefore, in order to prove (i)$\Leftrightarrow$ (iv), it suffices to verify that $\mathscr{C}_f^*=0$ if and only if (iv) holds. Suppose $\mathscr{C}_f^*=0$. Then, as we have seen, $H$ is injective. Dualizing (\ref{H-seq0}) (which is now a short exact sequence) and comparing with (\ref{H-seq}), we obtain (iv).
Conversely, assume that (iv) takes place. Thus $$\mathscr{C}_f \, \cong \, {\rm Ext}^1_R({\mathscr C}_f,\,R)(2-d) \, \cong \, {\rm Ext}^1_R({\mathscr C}_f(d-2),\,R).$$ Now recall that the $R$-torsion $\tau_R(\mathscr{C}_f)$ of $\mathscr{C}_f$ coincides with the kernel of the canonical biduality map $\mathscr{C}_f\rightarrow \mathscr{C}_f^{**}$, and so, by \cite[Proposition 2.6(a)]{AB}, we have $$\tau_R(\mathscr{C}_f) \, \cong \, {\rm Ext}^1_R({\rm Tr}\,\mathscr{C}_f, R).$$ But  (\ref{H-seq}) gives ${\rm Tr}\,\mathscr{C}_f=\mathscr{C}_f(d-2)$. Putting these facts together, we obtain
$$\mathscr{C}_f^* \, \cong \, {\rm Ext}^1_R({\mathscr C}_f(d-2),\,R)^* \, \cong \, {\rm Ext}^1_R({\rm Tr}\,\mathscr{C}_f,\,R)^* \, \cong \, \tau_R(\mathscr{C}_f)^* \, = \, 0.$$

Next, let us prove that (iv)$\Leftrightarrow$ (v). The graded canonical module of the standard graded polynomial ring $R$ is $\omega_R=R(-n)$, so $${\rm Ext}^1_R(\mathscr{C}_f,\,\omega_R) \, \cong \, {\rm Ext}^1_R(\mathscr{C}_f,\,R)(-n).$$ Also recall that, in the present setting, the Matlis duality functor is given by ${\rm Hom}_R(-, k)$. Thus, by graded local duality (see \cite[Example 13.4.6]{B-S}), we can write
\begin{equation}\label{Matlis}
H_{R_+}^{n-1}(\mathscr{C}_f) \, \cong \, {\rm Hom}_R({\rm Ext}^1_R(\mathscr{C}_f, R)(-n),\,k) \, \cong \, {\rm Hom}_R({\rm Ext}^1_R(\mathscr{C}_f, R),\,k)(n).
\end{equation}
If (iv) holds, then $H_{R_+}^{n-1}(\mathscr{C}_f) \cong {\rm Hom}_R(\mathscr{C}_f(d-2),\,k)(n) \cong {\rm Hom}_R({\mathscr C}_f, k)(n-d+2)$. Conversely, suppose (v). Using (\ref{Matlis}), we get $${\rm Hom}_R({\mathscr C}_f, k)(n-d+2) \, \cong \, {\rm Hom}_R({\rm Ext}^1_R(\mathscr{C}_f, R),\,k)(n),$$ which is the same as an isomorphism ${\rm Hom}_R({\mathscr C}_f(-n+d-2), k) \cong {\rm Hom}_R({\rm Ext}^1_R(\mathscr{C}_f, R)(-n),\,k)$. Taking Matlis duals and tensoring with $R(n)$, we obtain (iv). 

We proceed to show that (i)$\Leftrightarrow$(ii). First note that, by (\ref{H-seq0}), the 0-th Fitting ideal of $\mathscr{C}_f$ is the principal ideal generated by the determinant $h$ of the Hessian matrix of $f$, so we have
$$\sqrt{0:_R\mathscr{C}_f} \, = \, \sqrt{(h)}$$ and hence
${\rm dim}\,\mathscr{C}_f={\rm dim}\,R/(h)$. It follows that ${\rm dim}\,\mathscr{C}_f=n-1$ if and only if $h\neq 0$, i.e. $H$ is injective, which as seen above is equivalent to (i).

We clearly have $\ell (J_f)=\ell((J_f)_{R_+})$ and $\hht R_+=n$. Thus, by the general characterization given in \cite[Proposition 4.1]{McAdam}, we have that $\ell(J_f)=n$ if and only if $R_+  \in  {\rm Ass}_RR/\overline{J_f^m}$ for all $m\gg 0$, which is tantamount to saying that (vii) holds. This proves the equivalence (i)$\Leftrightarrow$(vii).

Evidently, (vii)$\Rightarrow$(vi), and the converse follows once we recall the chain (see \cite[Proposition 3.4]{McAdam})
$${\rm Ass}_RR/\overline{J_f} \, \subset \, {\rm Ass}_RR/\overline{J_f^2} \, \subset \, {\rm Ass}_RR/\overline{J_f^3} \, \subset \, \ldots$$

Thus, we have proved that the statements (i)$, \ldots,$(vii) are equivalent.

Now we point out that the implication (vii)$\Rightarrow$(viii) holds regardless of $\mathscr{R}(J_f)$ satisfying ${\rm S}_2$. Indeed, condition (vii) means that the irrelevant ideal $R_+$ belongs to the limit value $\overline{\mathscr{A}}^*(J_f)$ of the function $$m \, \mapsto \, {\rm Ass}_RR/\overline{J_f^m},$$ which is known to eventually stabilize (see, e.g., \cite[Proposition 3.4]{McAdam}). There is also the set $\mathscr{A}^*(J_f)$ defined analogously as the stable set of asymptotic prime divisors with respect to the usual filtration given by the powers of $J_f$. By \cite[Proposition 3.17]{McAdam}, we have $$\overline{\mathscr{A}}^*(J_f) \, \subset \, \mathscr{A}^*(J_f)$$ and hence $R_+\in \mathscr{A}^*(J_f)$, which gives (viii). Notice that (viii)$\Rightarrow$(ix) trivially.

It remains to show (ix)$\Rightarrow$(i), under the hypothesis that $\mathscr{R}(J_f)$ satisfies ${\rm S}_2$. In this case, by 
\cite[Remark 2.16]{Ciu}, the extended Rees algebra $$R[J_ft, t^{-1}] \, = \, \bigoplus_{i\in \mathbb{Z}}I^it^i \, \subset \, R[t, t^{-1}]$$ (where, by convention, $I^i=R$ whenever $i\leq 0$) must satisfy ${\rm S}_2$ as well. Note that (ix) means $R_+\in {\rm Ass}_RR/J_f^m$ for some $m\geq 1$. Now we are in a position to apply \cite[Proposition 4.1]{Ciu} in order to conclude that $\ell(J_f)=n$, as needed.
\qed

\begin{Remark}\rm  With the aid of \cite[Proposition 3.26 and Proposition 3.20]{McAdam}, the assertions (i)$, \ldots,$(vii) of Theorem \ref{max-spread} are also seen to be equivalent to each of the following ones:
	\begin{enumerate}
	\item[{\rm(a)}] $R_+\in \overline{\mathscr{A}}^*(IJ_f)$ for any non-zero $R$-ideal $I$, i.e., 
	$${\rm depth}\,R/\overline{I^mJ_f^{m}} \, = \, 0 \quad \mbox{for all} \quad m\gg 0;$$
	\item[{\rm(b)}] For some $j$, the integral closure of $R[f_{x_1}/f_{x_j}, \ldots, f_{x_n}/f_{x_j}]\subset k(x_1, \ldots, x_n)$ contains a prime $\mathfrak{Q}$ of  height $1$ such that $\mathfrak{Q}\cap R=R_+$.
	\end{enumerate} 
\end{Remark}

\smallskip

While, in Theorem \ref{max-spread}, the implication (ix)$\Rightarrow$(i) (and consequently the implication (viii)$\Rightarrow$(i)) holds if the Rees ring $\mathscr{R}(J_f)$ satisfies ${\rm S}_2$, we do not know whether this hypothesis can be dropped. Thus the following question becomes natural (see also Question \ref{quest-homal} in the next subsection).

\begin{Question}\label{when-J-max-spread}\rm Suppose ${\rm depth}\,R/J_f^{m}=0$ for all $m \gg 0$. Is it true that $\ell(J_f)=n$\,?
Does this hold if we only assume that ${\rm depth}\,R/J_f^{m}=0$ for some $m\geq 1$\,?
\end{Question}

Now let $f\in R$ be a linear free divisor. As we will see later, if $n\leq 4$ then $\ell(J_f)=n$. The converse is known to be false, and in Example \ref{normal-cross} below we show in addition that there is a linear free divisor $f$ such that $\ell(J_f)=n$ for any prescribed $n$.

The following well-known notion will be useful (we state it over $R$). A non-zero homogeneous ideal $I$ of $R$, minimally generated by $\nu$ elements, is said to satisfy the $G_s$ condition for a given $s\geq 0$ if $$\hht I_{\nu - j}(\varphi) \, \geq \, j+1 \quad \mbox{for} \quad j=1, \ldots, s-1.$$ Here, $\varphi$ denotes a minimal presentation matrix (or syzygy matrix) of $I$, and note that there is no dependence on the choice of $\varphi$ because each $I_{\nu - j}(\varphi)$ is just a Fitting ideal of $I$.

\begin{Example}\label{normal-cross}\rm Given an arbitrary $n$, consider the normal crossing divisor $$f \, = \, x_1\cdots x_n \, \in \, R \, = \, k[x_1, \ldots, x_n],$$ which is a well-known linear free divisor. The ideal $J_f$ is simply the ideal generated by all the products of distinct $n-1$ indeterminates, and satisfies $${\rm depth}\,R/J_f^m \, = \, {\rm max}\,\{0,\,n-m-1\} \quad \mbox{for all} \quad m\geq 1.$$ In particular, ${\rm depth}\,R/J_f^m=0$ for all $m\geq n-1$. We now claim that $\mathscr{R}(J_f)$ is Cohen-Macaulay (hence it has the ${\rm S}_2$ property). Notice first that a syzygy matrix of $J_f$ is given by
		$$\varphi=\left[\begin{array}{ccccccc}
		x_1& 0&\ldots& 0\\
		0&x_2&\ldots& 0\\
		\vdots&\vdots&\ddots &\vdots\\
		0& 0&\ldots&x_{n-1}\\
		-x_n& -x_n&\ldots& -x_n
		\end{array}\right].$$
Here we have $$\hht I_{n-j}(\varphi) \, = \, j+1 \quad \mbox{for} \quad j=1, \ldots, n-1,$$ so that $J_f$ satisfies the $G_n$ property. Moreover, because $f$ is free, $J_f$ has projective dimension 1 over $R$ (see Lemma \ref{commut}). It follows by \cite[Proposition 4.11]{SUV} that $\mathscr{R}(J_f)$ is Cohen-Macaulay, as claimed. Now we are in a position to apply Theorem \ref{max-spread} to conclude that $\ell(J_f)=n$.

In addition, the theorem gives us that $\mathscr{C}_f$ has projective dimension 1 over $R$ (because the gradient vectors of the natural generators of $J_f$ generate a free module) and dimension $n-1$, hence $\mathscr{C}_f$ is a Cohen-Macaulay module, which yields
\[ H_{R_+}^{i}({\mathscr C}_f)\cong \left\{ \begin{array}{rc}
    {\rm Hom}_R({\mathscr C}_f, k)(2), \, ~~ i=n-1\\\
    0 \hspace{0.96in}, \, ~~   i\neq n-1
    \end{array} \right.
  \]

\end{Example}

\medskip

In Example \ref{normal-cross}, another way to confirm that $\ell(J_f)=n$ is by showing that the (monomial) ideal $J_f$ is of linear type. This fact and lots of other experiments suggest a more restrictive question as well as a conjecture about the interplay between maximal analytic spread and the linear type property; as already seen, the latter implies the former.

\begin{Question}\rm For arbitrary $n\geq 3$ (the number of variables), does there exist a free divisor $f$, with $J_f$ {\it not} of linear type, such that $\ell(J_f)=n$\,?
\end{Question}

The case of interest is $n\geq 4$. Indeed, if $n=3$ then any member of the family of (non-linear) free divisors given in Section \ref{thirdfam} yields an affirmative answer to this question.

\begin{Conjecture} If $f$ is a linear free divisor such that $\ell(J_f)=n$, then $J_f$ is of linear type.

\end{Conjecture}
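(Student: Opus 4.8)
The plan is to reduce the conjecture to a purely ideal-theoretic condition on the presentation matrix and then to attack that condition through the interplay between the Hessian matrix (which governs $\ell(J_f)$ by Lemma \ref{rank}) and the Saito matrix (which encodes the syzygies). Since $f$ is a free divisor, Lemma \ref{commut} gives that $J_f$ is perfect of codimension $2$, hence strongly Cohen--Macaulay; for such ideals $J_f$ is of linear type if and only if it satisfies the condition $G_{\infty}$ (equivalently $G_n$ here, in the notation recalled before Example \ref{normal-cross}). The implication $G_{\infty}\Rightarrow$ linear type is \cite[Proposition 4.11]{SUV} combined with strong Cohen--Macaulayness, while the converse follows because linear type localizes and forces $\mu((J_f)_{\mathfrak p})=\ell((J_f)_{\mathfrak p})\leq \hht\mathfrak p$ for every prime $\mathfrak p\supseteq J_f$. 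As $\hht I_{n-1}(\varphi)=2$ by Hilbert--Burch, the condition $G_2$ is automatic, so what must be shown is $\hht I_{n-j}(\varphi)\geq j+1$ for $2\leq j\leq n-1$. Thus \emph{it suffices to prove that $\ell(J_f)=n$ forces $G_{\infty}$.}

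Before doing so I would record the exact meaning of the hypothesis. Since $\mu(J_f)=n$, the equality $\ell(J_f)=n$ is equivalent to ${\rm r}(J_f)=0$ and, because the symmetric-algebra fiber of a linearly presented ideal is always the polynomial ring $k[T_1,\dots,T_n]$, to having $\mathscr{F}(J_f)=k[T_1,\dots,T_n]$, i.e. to the algebraic independence of $f_{x_1},\dots,f_{x_n}$ (compare Theorem \ref{max-spread}). The key structural input, not available for an arbitrary perfect codimension-$2$ ideal, is that the columns of $\varphi$ are \emph{logarithmic derivations}: writing the Saito matrix $M=[\,\varphi\mid\varepsilon_n\,]$, Lemma \ref{Saito-crit} gives $\det M=\lambda f\neq 0$, so $M$ is invertible over ${\rm Frac}(R)$. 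Differentiating each syzygy relation $\sum_j\theta_j f_{x_j}=0$ and invoking Euler's identity yields a matrix identity ${\rm Hess}(f)\cdot M=P$, in which every entry of $P$ is a $k$-linear combination of $f_{x_1},\dots,f_{x_n}$ (the last column being $(n-1)\nabla f$). Since $M$ is invertible over ${\rm Frac}(R)$, Lemma \ref{rank} gives $\ell(J_f)={\rm rank}\,{\rm Hess}(f)={\rm rank}\,P$, which transports the analytic-spread hypothesis into a rank condition on a matrix whose entries lie in the $n$-dimensional $k$-space spanned by the partials.

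With this dictionary in place, I would argue by contradiction. If $G_{\infty}$ fails, there is a prime $\mathfrak p\supseteq J_f$ with $\hht\mathfrak p=s\leq n-1$ and $\mu((J_f)_{\mathfrak p})\geq s+1$, so $J_f$ is not of linear type locally at $\mathfrak p$ and ${\rm Sym}_R(J_f)$ carries a nonzero $R$-torsion element $\xi$. The aim is to show that such a $\xi$ produces a genuine algebraic relation among $f_{x_1},\dots,f_{x_n}$, i.e. a drop ${\rm rank}\,P\leq n-1$, contradicting $\ell(J_f)=n$. Concretely, one would carry the extra (non-linear) Rees equation attached to $\xi$ through the identity ${\rm Hess}(f)M=P$ and the adjugate relation ${\rm Hess}(f)=P\,{\rm adj}(M)/(\lambda f)$, hoping to force $\det{\rm Hess}(f)=0$.

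\textbf{The main obstacle} is precisely this last translation. In general ${\rm r}(I)=0$ (equivalently maximal analytic spread for an $n$-generated ideal) does \emph{not} imply $G_{\infty}$ or linear type, so the argument must genuinely use the freeness of $f$ and the linearity of $\varphi$, not merely perfection in codimension $2$; the difficulty is that the torsion of ${\rm Sym}_R(J_f)$ is detected at an embedded prime $\mathfrak p$ of height $s<n$, whereas $\ell(J_f)=n$ is a condition at the irrelevant ideal $R_+$, and there is no formal mechanism propagating the local failure to $R_+$. I would therefore first settle the base range $n\leq 4$ (where $G_{\infty}$ reduces to the two inequalities $\hht I_2(\varphi)\geq 3$ and $\hht I_1(\varphi)\geq 4$, checkable directly from the linear entries of $\varphi$), and then attempt an inductive or specialization argument, possibly exploiting the prehomogeneous/quiver structure underlying linear free divisors, to control how the fat locus of $J_f$ interacts with the Hessian rank. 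Absent such control, the implication $\ell(J_f)=n\Rightarrow$ linear type remains, as stated, conjectural.
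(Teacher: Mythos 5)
There is no proof in the paper for you to match: the statement is raised as an open \emph{conjecture}, and the authors say explicitly that they have not been able to settle it for $n\geq 5$; the only evidence they offer is the case $n\leq 4$, which they verify not by a structural argument but by running through the classification of linear free divisors in at most four variables from \cite[6.4, p.\,837]{Grangeretal}. So the question is whether your proposal actually closes the conjecture, and by your own honest admission it does not: it is a research plan whose decisive step is left open.

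Within that caveat, your reductions are sound and worth keeping. For a perfect codimension-$2$ ideal (projective dimension $1$), linear type is indeed equivalent to $G_\infty$ (equivalently $G_n$ for the $n$-generated $J_f$): one direction is \cite[Theorem 1.1]{Hun} (or \cite[Proposition 4.11]{SUV}), and the converse holds since linear type forces $\mu((J_f)_{\mathfrak p})\leq \hht \mathfrak p$ for every prime $\mathfrak p\supseteq J_f$. The identity ${\rm Hess}(f)\cdot M=P$, with $M$ the Saito matrix and $P$ a matrix whose entries lie in the $k$-span of $f_{x_1},\dots,f_{x_n}$, is also correct for a \emph{linear} free divisor (differentiate each syzygy $\sum_i g_{ij}f_{x_i}=0$, use that the $\partial g_{ij}/\partial x_l$ are scalars, and Euler's identity for the last column), and it does convert $\ell(J_f)=n$ into ${\rm rank}\,P=n$. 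The genuine gap is exactly where you locate it: failure of $G_\infty$ is a phenomenon at a prime $\mathfrak p$ of height $s\leq n-1$, whereas ${\rm rank}\,P=n$ is a generic (fraction-field) condition, and you provide no mechanism by which torsion of ${\rm Sym}_R(J_f)$ supported at $\mathfrak p$ forces a rank drop of $P$, i.e.\ an algebraic relation among the partials. That such a mechanism must use linearity in an essential way is not optional: the paper's own third family (Theorem \ref{last-family}, $\alpha\geq 3$, $\beta=2$) consists of \emph{non-linear} free divisors in three variables with $\ell(J_f)=3=n$ whose Jacobian ideals are not of linear type, so any argument relying only on perfection in codimension $2$ plus maximal analytic spread is doomed. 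Finally, your fallback claim that the case $n\leq 4$ is ``checkable directly from the linear entries of $\varphi$'' is unsubstantiated; the paper needs the full classification of linear free divisors even in that range.
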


We have not been able to solve this conjecture for $n\geq 5$. It is true in $n\leq 4$ variables, as we can verify using the classification of linear free divisors given in \cite[6.4, p.\,837]{Grangeretal}.

\smallskip

Next we furnish more examples.

\begin{Example}\rm Consider the so-called Gordan-Noether cubic
$$f=xw^2+ytw+zt^2 \, \in \, R \, = \, k[x, y, z, w, t].$$ In this case, while the symmetric algebra ${\rm Sym}_RJ_f=B/\mathscr{S} =R[t_1, t_2, t_3, t_4, t_5]/\mathscr{S}$ has dimension 6 and depth 5, a calculation shows that $\mathscr{R}(J_f)$ is Cohen-Macaulay (in particular, it has the ${\rm S}_2$ condition). Indeed, if $\varphi$ is a minimal presentation matrix of $J_f$, then the  saturation $$\mathscr{S}:_BI_{4}(\varphi)^{\infty},$$ which by \cite[Lemma 2.11]{Cleto} defines  $\mathscr{R}(J_f)$ in the ring $B$ (note that $I_{4}(\varphi)$ defines the non-principal locus of $J_f$), is perfect of codimension 4. Now, further computations show that ${\rm depth}\,R/J_f=2$ and
$${\rm depth}\,R/J_f^{m} \, = \, 1 \quad \mbox{for all} \quad m\geq 2.$$ Therefore, using Theorem \ref{max-spread}, we conclude that $\ell(J_f)<5$. More precisely, the special fiber ring can be expressed as
$\mathscr{F}(J_f)  = k[t_1, t_2, t_3, t_4, t_5]/(t_2^2-t_1t_3)$, which yields $\ell(J_f)=4$.

\end{Example}

\begin{Example}\rm Consider the quintic $$f=2w^4u+xu^4+ywu^3+zw^2u^2 \, \in \, R \, = \, k[x, y, z, w, u].$$ This is the case $n=5$ of Theorem \ref{free_div_normal_curve}, hence $f$ is a linear free divisor (here it should be mentioned, for completeness, that $f/u$ is not free and its Jacobian ideal is not even linearly presented). By Proposition \ref{spread4}(ii), we have $\ell (J_f) = 4 < 5$, and Theorem \ref{spread4-CM} ensures that $\mathscr{R}(J_f)$ is Cohen-Macaulay. Therefore, by Theorem \ref{max-spread}, we conclude that $${\rm depth}\,R/J_f^{m} \, > \, 0 \quad \mbox{for all} \quad m \geq 1.$$ In fact, for such $f$ it can be verified that ${\rm depth}\,R/J_f=3$,
${\rm depth}\,R/J_f^2=2$, and
${\rm depth}\,R/J_f^m=1$ for all $m\geq 3$. An interesting consequence of the non-vanishing of the asymptotic depth of $J_f$ concerns the higher conormal modules $J_f^m/J_f^{m+1}$. Indeed, in this situation the (also well-defined) conormal asymptotic depth of $J_f$ must be positive as well, since by \cite{Brod} we can write
$$\displaystyle \lim_{m\rightarrow \infty}\,{\rm depth}\,J_f^m/J_f^{m+1} \, \geq \, \displaystyle \lim_{m\rightarrow \infty}\,{\rm depth}\,R/J_f^{m} \, > \, 0.$$
It follows that $R_+\notin {\rm Ass}_RJ_f^m/J_f^{m+1}$ for all $m\gg 0$.

\end{Example}

\begin{Example}\rm Consider the plane sextic $$f=x^6 - 2x^3y^2z + y^4z^2 + y^6 \, \in \, R \, = \, k[x, y, z].$$ This is the case $\alpha=3$ and $\beta =2$ of Theorem \ref{last-family}(i), hence $f$ is a free divisor (which is no longer linear). By Theorem \ref{last-family}(ii), the ring $\mathscr{R}(J_f)$ is Cohen-Macaulay. It can be verified that $${\rm depth}\,R/J_f^m \, = \, 0 \quad \mbox{for all} \quad m\geq 2.$$ Applying Theorem \ref{max-spread} we obtain that $\ell(J_f)=3$. Now from Theorem \ref{last-family}(iii) we know that $J_f$ cannot be of linear type. To see this explicitly, one of the minimal generators of the defining ideal of the Rees algebra in the ring $R[t_1, t_2, t_3]$ is the following polynomial which is not linear in the $t_i$'s:
$$3xyt_1t_2 + 2xzt_1t_3-3yzt_2t_3-3y^2t_3^2-2z^2t_3^2.$$ Furthermore, the theorem yields ${\rm Ext}^1_R({\mathscr C}_f, R)\cong {\mathscr C}_f(4)$ and
\[ H_{R_+}^{j}({\mathscr C}_f)\cong \left\{ \begin{array}{rc}
    {\rm Hom}_R({\mathscr C}_f, k)(-1),  ~~ j=2\\\
    0 \hspace{0.96in}, \, ~~  j\neq 2
    \end{array} \right.
  \]
\end{Example}

\begin{Example}\label{G-S1}\rm Consider the quartic $$f=x^4 - xyz^2 + z^3w \, \in \, R \, = \, k[x, y, z, w],$$ which is not free as $J_f$ is not perfect. Let us also mention that the ideal $J_f$ is not of linear type, since the polynomial $$4xt_2^2 - 4zt_1t_4-yt_4^2 \, \in \, R[t_1, t_2, t_3, t_4]$$ is one of the minimal generators of the defining ideal of $\mathscr{R}(J_f)$. On the other hand, it is not hard to verify that the associated graded ring of $J_f$ -- i.e. the graded algebra $\bigoplus_{s\geq 0}J_f^s/J_f^{s+1}$ -- satisfies the ${\rm S}_1$ property; since in addition $\hht J_f\geq 2$, we get by \cite[Remark 2.16]{Ciu} that $\mathscr{R}(J_f)$ satisfies ${\rm S}_2$ (it can be shown that in fact $\mathscr{R}(J_f)$ is Cohen-Macaulay). Furthermore, ${\rm depth}\,R/J_f^i=1$
for $i=1, 2, 3$, while
$${\rm depth}\,R/J_f^m \, = \, 0 \quad \mbox{for all} \quad m\geq 4.$$ Applying Theorem \ref{max-spread}, we conclude that $\ell(J_f)=4$. We also get ${\rm Ext}^1_R({\mathscr C}_f, R)\cong {\mathscr C}_f(2)$, and
\[ H_{R_+}^{j}({\mathscr C}_f)\cong \left\{ \begin{array}{rc}
    {\rm Hom}_R({\mathscr C}_f, k)(2), \, ~~ j=3\\\
    0 \hspace{0.96in}, \, ~~  j\neq 3
    \end{array} \right.
  \]

\end{Example}

\subsection{Application: Criterion for homaloidness} As above let $f\in R=k[x_1,\ldots, x_n]$, $n\geq 3$, be a non-zero reduced homogeneous polynomial. In this subsection, we assume additionally that the field $k$ is algebraically closed. To the form $f$ we can associate the rational map $$\mathscr{P}_f=(f_{x_1}:\cdots: f_{x_n}):\pp^{n-1}\dasharrow \pp^{n-1},$$ the so-called {\it polar map} defined by $f$. Thus the base locus of $\mathscr{P}_f$ is the singular locus of the projective hypersurface $V(f)\subset \pp^{n-1}$.

\begin{Definition}$($\cite{Dolg}$)$ \rm The polynomial $f$ is {\it homaloidal} if $\mathscr{P}_f$ is birational (hence a Cremona transformation).
\end{Definition}

Over $k={\mathbb C}$, this definition can be translated by saying that $\mathscr{P}_f$ has degree 1 (taking into account an appropriate notion of degree in this context), and according to \cite[Corollary 2]{D-Papa} the property of being homaloidal depends only on $f_{\rm red}$.

The following is a preliminary fact connecting this class of polynomials to the class of free divisors. It can be also seen as a first source of examples of homaloidal divisors (examples in higher dimensions can be found, e.g., in \cite{CRS} and \cite{Maral}). Recall that in general the dimension of the image of the polar map $\mathscr{P}_f$ is given by $\ell(J_f)-1$ (see the proof of Proposition \ref{app-homal}).

\begin{Proposition}\label{n4}{\rm ($k=\mathbb{C}$)} If $n\leq 4$ then every linear free divisor is homaloidal.
\end{Proposition}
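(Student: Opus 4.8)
The plan is to split ``homaloidal'' into two separate properties of the polar map $\mathscr{P}_f$: that it is \emph{dominant}, and that, being dominant, it has \emph{degree one}. Since (as recorded just before Proposition \ref{app-homal}) the image of $\mathscr{P}_f$ has dimension $\ell(J_f)-1$, dominance is exactly the maximal-analytic-spread condition $\ell(J_f)=n$ of Theorem \ref{max-spread}. So the first half of the argument is to establish $\ell(J_f)=n$ for $n\leq 4$, and the second, harder half is to upgrade a dominant polar map to a birational one.

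For dominance I would argue as follows. By Lemma \ref{rank}, $\ell(J_f)$ equals the rank of the Hessian matrix $H$ of $f$, so $\ell(J_f)=n$ amounts to the non-vanishing of the Hessian determinant $h=\det H$. Here I would invoke the classical Gordan--Noether--Hesse theorem: a projective hypersurface in $\pp^{m}$ with $m\leq 3$ has identically vanishing Hessian precisely when it is a cone. As $n\leq 4$ places $V(f)$ in $\pp^{n-1}$ with $n-1\leq 3$, and as the standing hypothesis that the partials of $f$ are $k$-linearly independent forbids $f$ from being a cone, we conclude $h\neq 0$, hence $\ell(J_f)=n$ and $\mathscr{P}_f$ is dominant. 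This recovers the claim ``$\ell(J_f)=n$ for $n\leq 4$'' anticipated earlier in the section.

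The core of the proof is degree one, for which I would exploit the prehomogeneous structure carried by any linear free divisor. Because $f$ is a \emph{linear} free divisor, the non-Euler members of the free basis of $T_{R/k}(f)$ have linear coefficients (they are the columns of the linear syzygy matrix of $J_f$), and together with $\varepsilon_n$ they are all degree-zero logarithmic derivations, i.e. linear vector fields. The bracket of two linear vector fields is again linear, and the bracket of logarithmic derivations is logarithmic, so these derivations span a Lie subalgebra $\mathfrak{g}\subseteq\mathfrak{gl}_n$. By Saito's criterion (Lemma \ref{Saito-crit}) the basis vector fields satisfy $\det[\theta_j(x_i)]=\lambda f$, so they span $\C^n$ at every point off $V(f)$; hence the connected group $G$ integrating $\mathfrak{g}$ has a dense orbit, and since each $\theta\in\mathfrak{g}$ has $\theta(f)\in(f)$ with $\theta$ of degree zero, $f$ is a relative invariant of the prehomogeneous vector space $(G,\C^n)$. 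The non-vanishing of $h$ secured above provides the regularity of this space, and Ein--Shepherd-Barron \cite{E-Barron} then yields that $f$ is homaloidal.

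I expect the genuine obstacle to be precisely this degree-one step -- dominance being a one-line Hessian-rank computation -- and in particular the verification that the prehomogeneous space is \emph{regular} and that Ein--Shepherd-Barron applies verbatim to the not-necessarily-reductive group $G$ produced above. For a fully self-contained treatment in such low dimension one can instead invoke the explicit classification of complex linear free divisors in at most four variables \cite[6.4, p.\,837]{Grangeretal}: up to linear coordinate change there are only finitely many, and each is checked to be homaloidal directly (for $n=3$ this matches Dolgachev's classification of homaloidal cubics, while the model $x_1\cdots x_n$ realises the standard Cremona transformation of $\pp^{n-1}$). Either route closes the argument once dominance is in hand.
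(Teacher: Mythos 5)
Your dominance step coincides with the paper's: the published proof also reduces to the Gordan--Noether--Hesse circle of ideas, citing \cite[Propositions 2.4 and 2.5]{GR} to conclude that a non-cone in $\pp^{n-1}$ with $n\leq 4$ has non-vanishing Hessian, hence $\ell(J_f)=n$ and the image of $\mathscr{P}_f$ has dimension $n-1$. Where you genuinely diverge is the birationality step. The paper passes neither through prehomogeneous vector spaces nor through the classification: it invokes the Doria--Hassanzadeh--Simis criterion \cite[Theorem 3.2]{AHA}, which upgrades a dominant rational map of $\pp^{n-1}$ to a birational one as soon as the base ideal has maximal linear rank. For a linear free divisor this hypothesis is automatic -- $J_f$ is linearly presented, so $\varphi_1=\varphi$ and $I_{n-1}(\varphi)=J_f$ by Hilbert--Burch -- and birationality drops out in one line, uniformly in $f$, with no case analysis and no group theory. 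This is also the mechanism reused later in Proposition \ref{app-homal} and Corollary \ref{lin-homal}, so the paper's choice of tool serves the whole architecture of Section \ref{maxell}, not just this proposition.

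Both of your routes are viable in outline, but each costs something the paper's route avoids, and in each case the cost sits exactly at the load-bearing step. In Route A, the passage from ``linear free divisor with $\det H(f)\neq 0$'' to ``homaloidal'' via Ein--Shepherd-Barron requires (as you yourself flag) that the regularity/duality theory of prehomogeneous vector spaces applies to the group you integrate from the degree-zero logarithmic derivations, which need not be reductive; one also needs the (standard, but unstated) equivalence, for homogeneous $f$, between non-vanishing of $\det H(f)$ and the non-degeneracy of the Hessian of $\log f$ used in the definition of regularity. These points can be settled (this is essentially the Granger--Mond--Schulze perspective on linear free divisors), but as written they are assumed rather than proved. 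In Route B, the argument is only complete once the finitely many members of the classification \cite[6.4, p.\,837]{Grangeretal} are each verified to be homaloidal, which you assert but do not carry out. So your proposal is a correct strategy whose decisive step is either borrowed from a theorem whose applicability is not fully justified or deferred to unperformed computations, whereas the paper's two-citation proof (Hessian non-vanishing plus maximal linear rank) is complete as it stands.
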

\demo Let $f\in R$ be a linear free divisor. Recall we are supposing that $f$ is not a cone (see Subsection~\ref{free divisors}). Thus, by \cite[Proposition 2.4 and Proposition 2.5]{GR}, $f$ has a non-zero Hessian, so that $\ell(J_f)=n$. Hence, the dimension of the image of  $\mathscr{P}_f$ is $n-1$. As the linear rank of the gradient ideal $J_f$ is maximal, it follows by \cite[Theorem 3.2]{AHA} that $\mathscr{P}_f$ is birational.
\qed

\medskip

Notice that this proposition fails if $n\geq 5$. Indeed, if in this case we take $f$ as being a linear free divisor as described in Theorem \ref{free_div_normal_curve}, then by Proposition \ref{spread4}(ii) the analytic spread of $J_f$ is 4, hence the image of $\mathscr{P}_f$ has dimension at most $n-2$ and so this map cannot be birational.

Our application regarding homaloidness is the following ideal-theoretic, also homological, version of the criterion given in \cite[Theorem 3.2]{AHA}. It is not as practical or effective as the original one, but in our view it adds some flavor to the classical -- typically geometric -- theory and, moreover, helps linking to different algebraic tools and invariants.

\begin{Proposition}\label{app-homal}
	Given $f\in R$ as before, let $\varphi_1$ be the submatrix of a minimal syzygy matrix of the ideal $J_f$ consisting of its linear syzygies, and suppose $I_{n-1}(\varphi_1)\neq (0)$. Assume any one of the following situations: 	\begin{enumerate}
	\item[{\rm(i)}] ${\rm projdim}\,\overline{J_f^{m}}=n-1$ for some $m\geq 1$;
	\item[{\rm(ii)}] $\mathscr{R}(J_f)$ satisfies ${\rm S}_2$, and ${\rm projdim}\,J_f^{m}=n-1$ for some $m\geq 1$.
	\end{enumerate} 
	Then $f$ is homaloidal.
	\end{Proposition}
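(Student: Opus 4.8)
The plan is to use the projective-dimension hypotheses to force $\ell(J_f)=n$ by means of Theorem \ref{max-spread}, and then to invoke the birationality criterion of \cite[Theorem 3.2]{AHA} exactly as in the proof of Proposition \ref{n4}. The first thing I would record is the geometric meaning of the analytic spread: since the $f_{x_i}$ are forms of one and the same degree, $\mathscr{F}(J_f)\cong k[f_{x_1},\ldots,f_{x_n}]$ is precisely the homogeneous coordinate ring of the closure of the image of $\mathscr{P}_f$ in $\pp^{n-1}$, and hence that image has dimension $\dim \mathscr{F}(J_f)-1=\ell(J_f)-1$. In particular, $\mathscr{P}_f$ is dominant if and only if $\ell(J_f)=n$.

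Next I would pass from projective dimension to depth by Auslander--Buchsbaum over the polynomial ring $R$. For any proper nonzero homogeneous ideal $I\subset R$ one has ${\rm projdim}\,(R/I)={\rm projdim}\,I+1$, so that ${\rm projdim}\,I=n-1$ is equivalent to ${\rm projdim}\,(R/I)=n$, which by the Auslander--Buchsbaum formula (${\rm projdim}+\depth=n$) is in turn equivalent to $\depth R/I=0$. Applying this with $I=\overline{J_f^{m}}$, which is a proper nonzero ideal because $J_f\subseteq R_+$, shows that hypothesis (i) is exactly condition (vi) of Theorem \ref{max-spread}; applying it with $I=J_f^{m}$ shows that hypothesis (ii) is condition (ix) of that theorem, for which the standing ${\rm S}_2$ assumption on $\mathscr{R}(J_f)$ is precisely what is demanded. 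In either case Theorem \ref{max-spread} yields $\ell(J_f)=n$, so by the first step the (closure of the) image of $\mathscr{P}_f$ has dimension $n-1$; that is, $\mathscr{P}_f$ is dominant.

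Finally I would assemble the conclusion. The module of syzygies of $J_f$ has rank $n-1$, so the hypothesis $I_{n-1}(\varphi_1)\neq (0)$ says exactly that the submatrix $\varphi_1$ of linear syzygies already attains this maximal rank, i.e. the linear rank of $J_f$ is maximal. With $\mathscr{P}_f$ dominant and the linear rank maximal, \cite[Theorem 3.2]{AHA} applies verbatim, just as in Proposition \ref{n4}, and gives that $\mathscr{P}_f$ is birational, i.e. $f$ is homaloidal. I expect the only delicate points to be bookkeeping rather than genuine obstacles: confirming that $\overline{J_f^{m}}$ (respectively $J_f^{m}$) is a proper nonzero ideal so that the Auslander--Buchsbaum step is legitimate, and matching the rank condition $I_{n-1}(\varphi_1)\neq(0)$ with the \emph{maximal linear rank} hypothesis required by \cite[Theorem 3.2]{AHA}.
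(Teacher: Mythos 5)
Your proposal is correct and follows essentially the same route as the paper: Theorem \ref{max-spread} combined with the Auslander--Buchsbaum formula yields $\ell(J_f)=n$, hence the image of $\mathscr{P}_f$ has dimension $\ell(J_f)-1=n-1$, and then the maximal linear rank condition encoded by $I_{n-1}(\varphi_1)\neq(0)$ allows one to invoke \cite[Theorem 3.2]{AHA} exactly as in Proposition \ref{n4}. The only difference is that you spell out the bookkeeping (matching hypotheses (i) and (ii) with conditions (vi) and (ix) of Theorem \ref{max-spread}) which the paper leaves implicit.
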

\demo First, in either case, our Theorem \ref{max-spread} (together with the Auslander-Buchsbaum formula) ensures that $\ell(J_f)=n$. On the other hand, 
$${\rm dim}({\rm image}\,\mathscr{P}_f) \, = \, {\rm dim}\,{\rm Proj}\, k[f_{x_1}, \ldots, f_{x_n}] \, = \, {\rm dim}\,{\rm Proj}\,\mathscr{F}(J_f) \, = \, \ell(J_f)-1 \, = \, n-1.$$ Now \cite[Theorem 3.2]{AHA} ensures that $\mathscr{P}_f$ is birational, as needed.
\qed

\begin{Example}\rm Let us first point out that not all homaloidal polynomials satisfy the condition $I_{n-1}(\varphi_1)\neq (0)$. Indeed, consider the cubic
$$f \, = \, xw^2 + yzw + z^3 \, \in \, k[x, y, z, w].$$ Then it can be checked that:
\begin{enumerate}
    \item[{\rm (a)}] $f$ is an irreducible homaloidal polynomial. This is indeed
the first member of the family of  irreducible homaloidal hypersurfaces described in \cite[p.\,1264]{Huh};

\item[{\rm (b)}] The Jacobian ideal $J_f$ is not linearly presented, and moreover has not enough linear syzygies. More precisely, only 2 columns of a minimal presentation matrix $\varphi$ are linear syzygies, and hence obviously $I_{3}(\varphi_1)= (0)$;

\item[{\rm (c)}] $J_f$ is not of linear type;

\item[{\rm (d)}] Quite interestingly, $J_f$ satisfies the conditions present in part (ii) of our Proposition \ref{app-homal}. In particular, it can be even shown that the Rees algebra of $J_f$ is Cohen-Macaulay.
\end{enumerate}
\end{Example}

\medskip

We now remark that if $f$ is a linear free divisor then the condition $I_{n-1}(\varphi_1)\neq (0)$ is automatically satisfied as in this case $\varphi_1=\varphi$ and  $I_{n-1}(\varphi)= J_f$ by the Hilbert-Burch theorem. We thus record the following corollary.

\begin{Corollary}\label{lin-homal} If $f$ is a linear free divisor satisfying either condition {\rm (i)} or {\rm (ii)} of Proposition \ref{app-homal}, then $f$ is homaloidal.
\end{Corollary}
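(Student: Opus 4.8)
The plan is to recognize that this statement is a direct corollary of Proposition \ref{app-homal}: that proposition already delivers homaloidness under either hypothesis (i) or (ii), subject only to the standing assumption $I_{n-1}(\varphi_1)\neq (0)$ on the linear part of a minimal syzygy matrix. So the entire proof reduces to checking that, when $f$ is a linear free divisor, this nonvanishing condition holds automatically; after that one simply invokes Proposition \ref{app-homal}.

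To verify the condition, I would first use the definition of a linear free divisor: the Jacobian ideal $J_f$ is linearly presented, meaning that every column of a minimal syzygy matrix $\varphi$ is already a linear syzygy. Consequently $\varphi_1=\varphi$, i.e.\ the linear part exhausts the whole presentation. Next, since $f$ is free, Lemma \ref{commut} tells us that $J_f$ is a perfect ideal of codimension $2$, so $\varphi$ is the $n\times(n-1)$ Hilbert-Burch matrix of $J_f$. By the Hilbert-Burch theorem the maximal minors of $\varphi$ generate $J_f$ up to a unit, that is $I_{n-1}(\varphi)=J_f$. Because $f$ is not a cone, $J_f$ is minimally generated by its $n$ partial derivatives and in particular $J_f\neq(0)$; hence $I_{n-1}(\varphi_1)=I_{n-1}(\varphi)=J_f\neq(0)$, as required.

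With the hypothesis $I_{n-1}(\varphi_1)\neq(0)$ in hand, the conclusion follows at once: assuming (i) or (ii), Proposition \ref{app-homal} applies verbatim and yields that $f$ is homaloidal. I do not anticipate any genuine obstacle here, since all of the real work is already packaged inside Proposition \ref{app-homal} (which in turn rests on Theorem \ref{max-spread} and the birationality criterion of \cite{AHA}); the only point requiring care is the bookkeeping identification $\varphi_1=\varphi$ together with $I_{n-1}(\varphi)=J_f$ via Hilbert-Burch, which is routine for a linearly presented perfect ideal of codimension $2$.
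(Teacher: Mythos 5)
Your proposal is correct and matches the paper's own argument exactly: the paper likewise observes that for a linear free divisor $\varphi_1=\varphi$ and $I_{n-1}(\varphi)=J_f\neq(0)$ by the Hilbert--Burch theorem, so Proposition \ref{app-homal} applies directly.
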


Before giving the first illustration, we raise the following question. We remark that the answer is {\it yes} if the second part of Question \ref{when-J-max-spread} has an affirmative answer as well. Also note that, by Proposition \ref{n4}, the case of interest is $n\geq 5$.

\begin{Question}\label{quest-homal}\rm ($n\geq 5$) Let $f$ be a linear free divisor satisfying ${\rm projdim}\,J_f^{m}=n-1$ for some $m\geq 1$. Must $f$ be homaloidal?

\end{Question}

\begin{Example}\rm The simplest example in arbitrary dimension is the normal crossing divisor $f  =  x_1\cdots x_n \in  R$ studied in Example \ref{normal-cross}. Then $f$ is a linear free divisor and  we have seen in particular that ${\rm depth}\,R/J_f^{n-1}=0$, i.e., ${\rm projdim}\,J_f^{n-1}=n-1$. Since $J_f$ is the ideal generated by all squarefree monomials of degree $n-1$, we get by \cite[Proposition 7.4.5]{Vi} that all powers of $J_f$ are integrally closed; in particular,
$$\overline{J_f^{n-1}} \, = \, J_f^{n-1}.$$
It follows by Corollary \ref{lin-homal} (or
Proposition \ref{app-homal}(i)) that $f$ is homaloidal, thus retrieving the well-known fact that the rational map $\pp^{n-1}\dasharrow \pp^{n-1}$ given by $$(x_1 : \ldots : x_n) \, \mapsto \, (x_2x_3\cdots x_n \, : \, x_1x_3\cdots x_n \, : \, \ldots \, : \, x_1x_2\cdots x_{n-1})$$ is birational -- the so-called Cremona involution on $\pp^{n-1}$.

\end{Example}

Below we illustrate Proposition \ref{app-homal} in the situation where $f$ is not free, and in both  reducible and irreducible cases.

\begin{Example}\label{homal-red}\rm  ($n=6$) Consider the hyperplane-quadric arrangement $$f \, = \, xw(yz + zt + tu) \, \in \,  R \, = \, k[x, y, z, w, t, u].$$ In this case, $f$ is non-free because $J_f$ is not perfect, while on the other hand this ideal (which is linearly presented, so that $\varphi_1=\varphi$) satisfies $I_{5}(\varphi_1)\neq (0)$ and $${\rm projdim}\,J_f^{3} \, = \, 5.$$ Moreover, as in Example \ref{G-S1}, the associated graded ring of $J_f$ has the ${\rm S}_1$ property and hence $\mathscr{R}(J_f)$ satisfies ${\rm S}_2$. By Proposition \ref{app-homal}(ii), $f$ is homaloidal. i.e., the rational map $\pp^{5}\dasharrow \pp^{5}$ given by $$(x : y : z : w : t : u)  \, \mapsto \, (w(yz+zt+tu) \,:\, xzw \,:\, xw(y+t) \,:\, x(yz+zt+tu) \,:\, xwu \,:\, xwt)$$ is Cremona.

\end{Example}

\begin{Example}\rm  ($n=5$) Consider the irreducible cubic $$f \, = \, xt^2 +yzt + z^3+ w^2t \, \in \,  R \, = \, k[x, y, z, w, t].$$ The ideal $J_f$ is perfect but $f$ is non-free as $\hht J_f=3$. It also satisfies $I_{4}(\varphi_1)\neq (0)$ and $${\rm projdim}\,J_f^{3} \, = \, 4.$$ Moreover, the associated graded ring of $J_f$ is Cohen-Macaulay; in particular, $\mathscr{R}(J_f)$ satisfies ${\rm S}_2$. By Proposition \ref{app-homal}(ii), $f$ is homaloidal. Explicitly, the rational map $\pp^{4}\dasharrow \pp^{4}$ given by $$(x : y : z : w : t) \, \mapsto \, (t^2 \,:\, zt \,:\, z^2+\frac{1}{3}yt \,:\, wt \,:\, yz+w^2+2xt)$$ is Cremona.
\end{Example}

Next, we provide a couple of additional observations and questions that, in our view, are interesting and potentially motivating for future research. First, note that if we write the homaloidal quartic $f$ of Example \ref{homal-red} as $$f=xg, \quad g \, = \, w(yz+zt+tu),$$ then a further calculation shows (using again our proposition)  that $g$ is homaloidal as well. This fact, among other examples, led us to suggest the following ``addition-deletion" problem inspired by well-known investigations in free divisor theory (see \cite{Terao}, also \cite{Abe} and \cite{S-T}).

\begin{Question}$($Addition-deletion for homaloidal divisors.$)$ \rm For polynomials $f,g\in R$, with $f$ homaloidal, when is the product $fg$ homaloidal? If $fg$ is  homaloidal, when is $f$ or $g$ homaloidal?

\end{Question}

Now let $f\in R=k[x, y, z, w, t, u, v]$ stand for the 2-catalecticant determinant
$$f = {\rm det }\left[\begin{array}{ccc}
	x&y&z\\
	z&w&t\\
	t&u&v
	\end{array}\right].$$ According to \cite[Proposition 3.25(b)]{Maral}), this cubic is homaloidal. Then, for such $f$, we have detected an intriguing, curious fact: the determinant $h(f)$ of the Hessian matrix of $f$ is a linear free divisor -- in particular, $h(f)$ is already reduced. In the situation where $h(f)$ is not reduced, we naturally consider $h(f)_{\rm red}$, which likewise can be a linear free divisor. For example, let  $$g = {\rm det }\left[\begin{array}{cccc}
	x&w&z&y\\
	y&x&w&z\\
	w&z&y&x\\
	z&y&x&w\\
	\end{array}\right]$$ in the ring $R=k[x, y, z, w]$. Note $g$ is in fact a linear free divisor, and using Corollary \ref{lin-homal} it is not hard to see that $g$ is also homaloidal. Here,
	$$h(g) \, = \, \lambda g^2 \quad \mbox{for some non-zero} \quad \lambda \in k,$$ and therefore $h(g)_{\rm red}$ is free.
	
	As expected, this phenomenon does not take place in general. For instance, if once again we take $f$ as the homaloidal quartic of Example \ref{homal-red}, then a calculation shows $h(f)=3x^2w^2f^2$, so that
	$h(f)_{\rm red}  =  3f$ is not free.

The facts above led us to raise the following question, which reconnects us to the central topic of freeness and closes the paper.

\begin{Question} \rm Let $f$ be a homaloidal polynomial. When is $h(f)_{\rm red}$ a (linear) free divisor?  If $h(f)$ is reduced and not a cone, must it be a (linear) free divisor?
\end{Question}

\bigskip

\noindent{\bf Acknowledgements.} 
The second-named author was supported by the CNPq grants 301029/2019-9 and 406377/2021-9.
The third-named author was supported by the CNPq grants 305860/2019-4 and  425752/2018-6. 

\bibliographystyle{amsalpha}

\begin{thebibliography}{10}
	
	
	 \bibitem{Abe}{T. Abe, Divisionally free arrangements of hyperplanes, Invent. Math. {\bf 204} (2016) 317--346.}
	
	\bibitem{ATY}{T. Abe, H. Terao, M. Yoshinaga, Totally free arrangements of hyperplanes, Proc. Amer. Math. Soc. {\bf 137} (2009) 1405--1410.}
	
	
	\bibitem{AB}{M. Auslander, M. Bridger, Stable module theory,
{Mem. Amer. Math. Soc.} {\bf 94}, 1969.}
	
	
\bibitem{Bayer-M}{D. Bayer, D. Mumford, {What can be computed in
algebraic geometry?} In: D. Eisenbud and L. Robbiano (eds.),
Computational Algebraic Geometry and Commutative Algebra,
Proceedings, Cortona (1991), Cambridge University Press, 1--48
(1993).}	
	
	
\bibitem{Mac}{D. Bayer, M. Stillman}, Macaulay: a computer algebra system for algebraic geometry, Macaulay
version 3.0 1994 (Macaulay for Windows by Bernd Johannes Wuebben, 1996).	
	
	\bibitem{Brod}{M. Brodmann, Some remarks on blow-up and conormal cones, Proc. Conf. Commutative Algebra (Trento, 1981), Lecture Notes in Pure and Appl. Math. {\bf 84}, Dekker, New York, 1983.}
	
	\bibitem{B-S}{M. Brodmann, R. Sharp, Local Cohomology: An Algebraic Introduction with Geometric Applications, Cambridge Univ. Press, Cambridge, 1998.}
	
	 \bibitem{B-C}{R.-O. Buchweitz, A. Conca, New free divisors from old, J. Commut. Algebra {\bf 5} (2013) 17--47.}

	
	\bibitem{B-M}{ R.-O. Buchweitz, D. Mond, Linear free divisors and quiver representations, in Singularities and computer algebra, London Math. Soc. Lecture Note Ser., vol. {\bf 324} (Cambridge
University Press, Cambridge, 2006), 41--77.}
	
	
	\bibitem{BST}{R. Burity, A. Simis, S. O. Toh\v{a}neanu, On the Jacobian ideal of an almost generic hyperplane arrangement, Proc. Amer. Math. Soc. {\bf 150} (2022) 4259--4276.}
	
	\bibitem{BDSS}{L. Bus\'e, A. Dimca,  H. Schenck, G. Sticlaru, The Hessian polynomial and the Jacobian ideal of a reduced hypersurface in ${\mathbb P}^n$, Adv. Math. {\bf 392} (2021) 108035.}
	
	 \bibitem{Espanha}{F. J. Calder\'on-Moreno, L. Narv\'aez-Macarro, The module $\mathcal{D}f^s$ for locally quasi-homogeneous free divisors, Compos. Math. {\bf 134} (2002) 59--74.}
	
	\bibitem{CRS}{C. Ciliberto, F. Russo, A. Simis, Homaloidal hypersurfaces and hypersurfaces with vanishing Hessian, Adv. Math. {\bf 218} (2008) 1759--1805.}
	
	\bibitem{Ciu}{C. Ciuperc\v{a}, Asymptotic prime ideals of $S_2$-filtrations, J. Algebra {\bf 503} (2018) 356--371.}
	
	\bibitem{Damon}{J. Damon, On the legacy of free divisors: discriminants and Morse type singularities, Amer. J. Math. {\bf 120} (1998) 453--492.}
	
	\bibitem{D-Sid}{H. Derksen, J. Sidman, Castelnuovo-Mumford regularity by approximation, Adv. Math. {\bf 188} (2004) 104--123.}
	
	
	\bibitem{Dimca-book}{A. Dimca, Hyperplane Arrangements: An Introduction, Universitext, Springer, 2017.}
	
	\bibitem{Dimca}{A. Dimca, Freeness versus maximal global Tjurina number for plane curves, Math. Proc. Cambridge Phil. Soc. {\bf 163} (2017) 161--172.}
	
	\bibitem{D-Papa}{A. Dimca, S. Papadima, Hypersurface complements, Milnor fibers and higher homotopy groups of arrangements, Ann. of Math. {\bf 158} (2003) 473--507.}
	
	
	\bibitem{D-S}{A. Dimca, G. Sticlaru, Free divisors and rational cuspidal plane curves, Math. Res. Lett. {\bf 24} (2017) 1023--1042.}
	
	 \bibitem{Dolg}{I. V. Dolgachev, Polar Cremona transformations, Michigan Math. J. {\bf 48} (2000) 191--202.}
	
	\bibitem{AHA}{A. Doria, H. Hassanzadeh, A. Simis, A characteristic-free criterion of birationality, Adv. Math. {\bf 230} (2012) 390--413.}
	
	
	\bibitem{E-Barron}{L. Ein, N. Shepherd-Barron, Some special Cremona transformations, Amer. J. Math. {\bf 111} (1989) 783--800.}
	
	\bibitem{Eisenbud}{D. Eisenbud, Linear sections of determinantal varieties, Amer. J. Math. {\bf 110} (1988) 541--575.}
	
	
	 \bibitem{GR}{A. Garbagnati, F. Repetto, A geometrical approach to Gordan-Noether's and Franchetta's contributions to a question
	posed by Hesse, Collect. Math. {\bf 60} (2009) 27--41.}
	
	\bibitem{GST}{M. Garrousian, A. Simis, S. O. Toh\v{a}neanu, A blowup algebra for hyperplane arrangements, Algebra Number Theory {\bf 12} (2018) 1401--1429.}
	
	
	
	
	\bibitem{Grangeretal}{M. Granger, D. Mond, A. Nieto-Reyes, M. Schulze, Linear free divisors and the global logarithmic comparison theorem, Ann. Inst. Fourier (Grenoble) {\bf 59} (2009) 811--850.}

\bibitem{Huh}{J. Huh, The maximum likelihood degree of a very affine variety, Compos. Math. {\bf 149} (2013) 1245--1266.}

\bibitem{Hun}{C. Huneke, On the symmetric algebra of a module, J. Algebra {\bf 69} (1981) 113--119.}



	
\bibitem{McAdam}{S. McAdam, Asymptotic prime divisors, Lecture Notes in Math., vol. {\bf 1023}, Springer-Verlag, Berlin, 1983.}
	
\bibitem{Cleto}{C. B. Miranda-Neto, Graded derivation modules and algebraic free divisors, J. Pure Appl. Algebra {\bf 219} (2015) 5442--5466.}
	
	\bibitem{Cleto2}{C. B. Miranda-Neto, Free logarithmic derivation modules over factorial domains, Math. Res. Lett. {\bf 24} (2017) 153--172.}
	
	
	
	\bibitem{Maral}{M. Mostafazadehfard, A. Simis, Homaloidal determinants, J. Algebra {\bf 450} (2016) 59--101.}
	
	
	 \bibitem{N}{R. Nanduri, A family of irreducible free divisors in ${\mathbb P}^2$, J. Algebra Appl. {\bf 14} (2015) 1550105, 11pp.}
	
	\bibitem{S}{K. Saito, Theory of logarithmic differential forms and logarithmic vector fields, J. Fac. Sci. Univ. Tokyo Math. {\bf 27} (1980) 265--291.}
	
	
	\bibitem{S0}{H. Schenck, Elementary modifications and line configurations in ${\mathbb P}^2$. Comment. Math. Helv. {\bf 78} (2003) 447--462.}
	
	\bibitem{Surv}{H. Schenck, Hyperplane arrangements: computations and conjectures, Arrangements of hyperplanes -- Sapporo 2009, Adv. Stud. Pure Math. {\bf 62} (2012) 323--358.}

	
	
	\bibitem{STY}{H. Schenck, H. Terao, M. Yoshinaga, Logarithmic vector fields for curve configurations in ${\mathbb P}^2$ with quasihomogeneous singularities, Math. Res. Lett. {\bf 25} (2018) 1977--1992.}
	
	\bibitem{S-T}{H. Schenck, S. O. Toh\v{a}neanu, Freeness of conic-line arrangements in ${\mathbb P}^2$, Comment. Math. Helv. {\bf 84} (2009) 235--258.}
	
	\bibitem{Si}{A. Simis, Two differential themes in characteristic zero, Topics in algebraic and noncommutative geometry (Luminy/Annapolis, MD, 2001) 195--204, Contemp. Math. {\bf 324}, Amer. Math. Soc., Providence, RI, 2003.}
	
	
	\bibitem{Simis-T}{A. Simis, S. O. Toh\v{a}neanu, Homology of homogeneous divisors, Israel J. Math. {\bf 200} (2014) 449--487.}
	
	\bibitem{SUV}{A. Simis, B. Ulrich, W. V. Vasconcelos, Rees algebras of modules, Proc. London Math. Soc. {\bf 87} (2003) 610--646.}
	
	\bibitem{Terao}{H. Terao, Arrangements of hyperplanes and their freeness I, II, J. Fac. Sci. Univ. Tokyo Sect. IA Math. {\bf 27} (1980) 293--320.}
	
	\bibitem{Terao2}{H. Terao, Generalized exponents of a free arrangement of hyperplanes and Shepherd-Todd-Brieskorn formula, Invent. Math. {\bf 63} (1981) 159--179.}
	
	\bibitem{Terao3}{H. Terao, The bifurcation set and logarithmic vector fields, Math. Ann. {\bf 263} (1983) 313--321.}
	
	\bibitem{Terao4}{H. Terao, Free arrangements of hyperplanes over an arbitrary field, Proc. Japan Acad. Ser. A  Math. Sci. {\bf 59} (1983) 301--303.}
	
	
	 \bibitem{To}{S. O. Toh\v{a}neanu, On freeness of divisors on ${\mathbb P}^2$, Comm. Algebra {\bf 41} (2013) 2916--2932.}
	 
	\bibitem{To2}{S. O. Toh\v{a}neanu, On the De Boer-Pellikaan method for computing minimum distance, J. Symbolic Comput. {\bf 45} (2010) 965--974.}
	
	\bibitem{TP}{M. Torielli, E. Palezzato, Free hyperplane arrangements over arbitrary fields, J. Algebraic Combin. {\bf 52} (2020) 237--249.}
	
	\bibitem{Vi}{R. Villarreal, Monomial Algebras, Monographs and Textbooks in Pure Appl. Math. {\bf 238}, Marcel Dekker, New York, 2001.}
	
	\bibitem{Yo}{M. Yoshinaga, Freeness of hyperplane arrangements and related topics, Ann. Fac. Sci. Toulouse Math. {\bf 23} (2014) 483--512.}
	

	
\end{thebibliography}

\end{document}